\def\x{{\bm   x}}
\def\R{{\mathbb R}}
\def\u{{\bm   u}}
\def\J{{\bm   J}}
\def\n{{\bm   n}}
\def\({\left(}
\def\[{\left[}
\def\){\right)}
\def\]{\right]}
\def\div{\nabla\cdot }
\def\grad{\nabla }
\newtheorem{thm}{Theorem}
\numberwithin{equation}{section}
\numberwithin{thm}{section}
\numberwithin{lem}{section}
\begin{document}
\title{Linearly decoupled energy-stable numerical  methods for    multi-component two-phase compressible flow\thanks{This work is  supported by   National Natural Science Foundation of China (No.11301163),  and KAUST research fund to the
Computational Transport Phenomena Laboratory.}
}

\author{Jisheng Kou\thanks{School of Mathematics and Statistics, Hubei Engineering  University, Xiaogan 432000, Hubei, China. } \and Shuyu Sun\thanks{Corresponding author. Computational Transport Phenomena Laboratory, Division of Physical Science and Engineering,
King Abdullah University of Science and
Technology, Thuwal 23955-6900, Kingdom of Saudi Arabia.   Email: {\tt shuyu.sun@kaust.edu.sa}.}
\and Xiuhua Wang\thanks{School of Mathematics and Statistics, Hubei Engineering  University, Xiaogan 432000, Hubei, China. }
}

 \maketitle

\begin{abstract}
In this paper, for the first time we propose two linear, decoupled,  energy-stable numerical  schemes for    multi-component two-phase compressible flow with a  realistic equation of state (e.g. Peng-Robinson equation of state). The methods are constructed based on the scalar auxiliary variable (SAV) approaches for Helmholtz free energy and the intermediate velocities that are designed to decouple the tight  relationship between velocity and molar densities.  The intermediate  velocities are also involved in the discrete   momentum  equation to ensure the consistency  with  the mass balance equations.  Moreover, we propose a component-wise SAV approach for a multi-component fluid, which requires solving  a sequence of linear, separate  mass balance equations.  We prove that the methods preserve  the unconditional energy-dissipation feature. Numerical results are presented to verify the effectiveness of the proposed methods.

\end{abstract}
\begin{keywords}
 Multi-component two-phase flow;  Diffuse interface model;  Energy stability;    Realistic  equation of state.
\end{keywords}
\begin{AMS}
65N12; 76T10; 49S05
 \end{AMS}

\section{Introduction}

It is a very  important issue to simulate multi-component two-phase compressible fluid systems with  a realistic equation of state (e.g. Peng-Robinson equation of state \cite{Peng1976EOS}).  It has a wide range of applications   in  chemical  and reservoir engineering   \cite{kousun2015CMA,qiaosun2014,kousun2015SISC,kousun2015CHE,polivka2014compositional, mikyvska2015General,jindrova2013fast,kousun2018Flash}, especially the pore scale modeling  of subsurface fluid flow including shale gas reservoir.  The classical models  of incompressible two-phase flows  or compositional flows  have been extensively studied and employed   \cite{firoozabadi1999thermodynamics,Moortgat2013compositional,chen2006multiphase,kouandsun2014dgtwophase},  the primal state variables of which are often    pressure, temperature, and chemical composition.  Although the classical models have been widely used, they suffer from a few essential   limitations  as pointed out in \cite{mikyvska2011new, polivka2014compositional};  for example, it is required to construct a pressure  equation since  there is no intrinsic  pressure equation  \cite{polivka2014compositional}.  

An alternative  modeling framework, which uses the moles, volume, and temperature  (the so-called  NVT-based framework)  as the primal state variables, has been intensively studied recently \cite{kousun2015CMA,qiaosun2014,kousun2015SISC,kousun2015CHE,kouandsun2016multiscale,polivka2014compositional,jindrova2013fast,mikyvska2015General,kousun2018Flash}. The NVT-based modeling framework  originates from the phase-splitting  calculations  of multi-component fluids at specified  moles, volume and temperature \cite{michelsen1999,mikyvska2011new,Nagarajan1991}.  Very recently, in the NVT-based framework, a general multi-component two-phase compressible flow model is rigorously derived by Kou and Sun in \cite{kouandsun2017modeling} based on   the thermodynamic laws and  realistic equations of state (e.g. Peng-Robinson equation of state).  This model has at least three important features that are distinguished  from the classical models:
 \begin{itemize}
\item  It has thermodynamically-consistent unified formulations for general average velocities and mass diffusion fluxes;
\item It uses diffusive interfaces and  realistic equations of state, and as a result,  it can characterize the fluid compressibility and partial miscibility between different phases;
\item It uses a general  thermodynamic pressure, which  is  a function of the molar density and temperature, and consequently, it is never required to construct the pressure  equation.
\end{itemize}
In addition,  another formulation of the  momentum conservation equation, which is  convenient for numerical simulation,  has been derived in \cite{kouandsun2017modeling} by the relation between the pressure gradient  and  chemical potential gradients. In this paper, we consider how to develop and analyze efficient   numerical methods for this model problem.

 A key challenge   in numerical simulation of diffuse interface models is  to construct  efficient numerical schemes  preserving  the discrete energy-dissipation law \cite{shen2015SIAM,shen2016JCP}.   In constructing  energy-stable numerical schemes for multi-component two-phase compressible flow model,  there are at least two main difficulties: one is   the strong nonlinearity  of bulk Helmholtz free energy density; the other is  the tightly coupling relationship between molar densities and flow velocity through the convection term in the mass balance equations and the stress force arising  from chemical potential gradients in the momentum balance equation.  
An energy-dissipation  numerical scheme was developed in \cite{kouandsun2017modeling} based on a convex-concave  splitting of Helmholtz free energy density, but it leads to a nonlinear and coupled system of the mass balance equations and momentum balance equation.  In this paper, we focus on the  linear, decoupled, energy stable numerical schemes.

Recently,  for incompressible two-phase flows, a decoupled approach \cite{Minjeaud2013Uncoupled}  was developed by  introducing an intermediate velocity  in the phase equation to resolve the coupling relation between the velocity and phase function, and this technique was used to construct linear, decoupled, efficient numerical  methods    for phase-field models of  incompressible two-phase flows \cite{shen2015SIAM,shen2016JCP}. When applying  this technique to compressible multi-component two-phase  flow model considered in this paper, we encounter two challenging problems: the first  is how to construct the intermediate velocities since the stress force in the momentum balance equation  is different from phase-field models;  the second is how to treat the  momentum balance equation using intermediate velocities.    The second problem is because at the time-discrete level, the  velocity variable   in the convection term of  the  momentum balance equation shall be consistent with the intermediate  velocities when we  combine the mass balance equation of each component and the  momentum balance equation to derive the variation of the kinetic  energy.  In this work, we will  construct two  intermediate  velocities, both of which can   uncouple the relationship between velocity and molar densities; we will also propose a discrete  formulation of the  momentum balance equation, which involves the intermediate  velocities and consequently consistent with  the mass balance equations.
It is noted that one of the introduced intermediate  velocities is for the first time defined by a component-wise way, and thus, it is very efficient for a special multi-component fluid.

There have been at least four approaches in the literature  dealing with the bulk Helmholtz free energy density derived from Peng-Robinson equation of state for constructing  energy-stable numerical schemes. The  first approach is the convex splitting method \cite{Elliott1993ConvexSpliting,Eyre1998ConvexSplitting}, which has been  popularly used in phase-field models \cite{shen2015SIAM,Wise2009Convex,Eyre1998ConvexSplitting,Baskaran2013convexsplitting}.
 The energy-stable numerical scheme based on the convex splitting method have also been developed and analyzed for the diffuse-interface models with Peng-Robinson equation of state \cite{qiaosun2014,fan2017componentwise,kou2017compositional,kouandsun2017modeling,kousun2018Flash,Peng2017convexsplitting}. The second approach is a modified Newton's method with a relaxation parameter that is dynamically chosen to ensure the energy decay property \cite{kousun2015SISC}.  The third approach is  a fully-implicit unconditionally-stable scheme \cite{kousun2015CHE},  which uses the difference of Helmholtz free energy density to approximate  the chemical potential. The fourth numerical scheme is developed in \cite{Li2017IEQ} based on the invariant energy quadratization (IEQ) approach that is  a novel, efficient method   and  has been  applied to many phase-field models intensively recently \cite{Yang2016IEQ,Yang2017IEQ,Yang2017IEQ2}. Very recently, a scalar auxiliary variable (SAV) approach \cite{Shen2018SAV} is built upon the  IEQ approach. It leads to unconditionally stable numerical schemes, which   only need to solve the linear equations with constant coefficients  at each time step. In this paper, we will apply the SAV approach to treat the mass balance equations and construct linear, unconditionally stable numerical schemes. Moreover, we will develop a component-wise SAV approach for a multi-component flow model, which  uncouples the relationships between multiple components and allows us to solve each component mass balance equation separately.  The schemes for gradient flows of multiple functions in \cite{Shen2018SAV} usually require the computation of eigen-matrix and eigenvalues  to  achieve the decoupled forms, but this computation  cost is free for the proposed component-wise SAV approach.  So the proposed scheme is efficient and easy-to-implement for the case of multiple components.

We must note that the proposed numerical schemes for multi-component two-phase flows are perfect combinations of the above intermediate  velocity approaches and   SAV approaches, which lead to a sequence of linearly decoupled equations.  The proposed schemes  are proved to be unconditionally energy stable.

The rest of this paper is organized as   follows. In Section 2, we will give a brief description  of  the multi-component two-phase flow model.
 In Section 3, we will propose the numerical schemes and prove the unconditional energy stability.  In Section 4,  numerical tests are carried out  to show the   effectiveness of the proposed methods.   Finally,   concluding remarks are provided in Section 5.

\section{Mathematical model of multi-component two-phase flow}

In this section, we briefly introduce  the mathematical model of multi-component two-phase flow with Peng-Robinson equation of state, which is very recently proposed in \cite{kouandsun2017modeling}. 

We consider the motion of a mixture fluid   composed of $M$ chemical  components at a constant  temperature.  Let $n_i$ be the molar density of   component $i$, and we denote the molar density vector by $\n = [n_1,n_2,\cdots,n_M]^T$.

Mathematical model developed  in \cite{kouandsun2017modeling} can employ  any  realistic   equation of state, for instance, the van der Waals equation of state and Peng-Robinson equation of state (PR-EOS)  \cite{Peng1976EOS}.  PR-EOS  has been widely  applied  in oil reservoir and chemical engineering due to its accuracy.  In this work, we focus on the PR-EOS-based Helmholtz free energy density $f_{b}(\n)$ of a homogeneous bulk  fluid, which has a form as
\begin{eqnarray}\label{eqHelmholtzEnergy_a0}
    f_b(\n)= f_b^{\textnormal{ideal}}(\n) +f_b^{\textnormal{repulsion}}(\n)+f_b^{\textnormal{attraction}}(\n),
\end{eqnarray}
where $f_b^{\textnormal{ideal}}$, $f_b^{\textnormal{repulsion}}$ and $f_b^{\textnormal{attraction}}$ are formulated in Appendix \ref{appendixHelmholtz}. 

The diffuse interfaces always occurs  between multiple  phases of a realistic fluid.  To  characterize  this feature, a local density gradient  contribution is added to the  free energy density of an inhomogeneous fluid, and consequently, the  general  Helmholtz free energy density (denoted by $f$) is expressed as
\begin{eqnarray}\label{eqHelmholtzfreeenergydensity}
 f(\n)=f_b(\n)+\frac{1}{2}\sum_{i,j=1}^Mc_{ij}\grad n_i\cdot\grad n_j,
 \end{eqnarray}
 where $c_{ij}(1\leq i,j\leq M)$ are the cross influence parameters depending on temperature but independent of molar densities.  The formulations of $c_{ij}$ can be found  in Appendix \ref{appendixInfluenceParameters}. We assume that the influence parameter matrix $\big(c_{ij}\big)_{i,j=1}^M$ is symmetric and moreover it is positive definite or positive semi-definite.

The chemical potential of component $i$ is defined as 
  \begin{eqnarray}\label{eqgeneralchemicalpotential}
   \mu_i(\n)=\frac{\delta  f(\n)}{\delta n_i}=\mu_i^b(\n)-\sum_{j=1}^M\div\(c_{ij}\grad{n_j}\),~~~\mu_i^b(\n)=\frac{\partial  f_b(\n)}{\partial n_i},
   \end{eqnarray}
  where $\frac{\delta}{\delta n_i}$ denotes the variational  derivative. By the thermodynamical relations, the general  thermodynamical pressure can be formulated as  a function of $\n$ at a constant temperature 
\begin{eqnarray}\label{eqMultiComponentDefPresGeneralB}
    p(\n) &=& \sum_{i=1}^Mn_i \mu_i(\n)- f(\n)\nonumber\\
   &=&p_b-  \sum_{i,j=1}^Mn_i\div\(c_{ij}\grad{n_j}\)-\frac{1}{2} \sum_{i,j=1}^Mc_{ij}\nabla n_i\cdot\nabla n_j,
\end{eqnarray}
where $p_b(\n)=\sum_{i=1}^Mn_i\mu_i^b(\n)-f_b(\n).$

The overall molar density of a mixture is denoted by $n=\sum_{i=1}^Mn_i$. Let $M_{w,i}$ denote the molar weight of component $i$, and then we denote  the mass density of component $i$ by $\rho_i=n_iM_{w,i}$ and denote the overall mass density of a mixture by $\rho=\sum_{i=1}^M\rho_i$. 

 We now describe the governing equations. First, the mass balance equation   for component  $i$    is 
\begin{eqnarray}\label{eqGeneralNSEQMass}
\frac{\partial n_i}{\partial t}+\div\(\u n_i\)+\div \J_i=0,
\end{eqnarray}
where $\u$ is a specific or average velocity and $\J_i$ is the diffusion flux of component $i$. In general, we can express the diffusion flux  of component $i$  as   \cite{Cogswell2010thesis,kouandsun2016multiscale,kouandsun2017modeling}
\begin{eqnarray}\label{eqMultiCompononentMassConserveDiffusion}
  \bm J_{i}=-\sum_{j=1}^M\mathcal{M}_{ij}\nabla \mu_{j},~~i=1,\cdots, M,
\end{eqnarray}
where   $\bm{\mathcal{M}}=\(\mathcal{M}_{ij}\)_{i,j=1}^M$ is the mobility tensor.   The mobility matrix $\bm{\mathcal{M}}$ shall be symmetric and at least positive semidefinite so that Onsager's reciprocal principle \cite{Groot2015NET} and the second law of thermodynamics are satisfied. 
 
 Three choices of the mobility $\bm{\mathcal{M}}$ in \eqref{eqMultiCompononentMassConserveDiffusion} are provided in \cite{kouandsun2017modeling} as below.   \begin{description}
\item[(A1)] The first mobility choice is to take $\bm{\mathcal{M}}$  as a diagonal positive definite matrix with diagonal elements 
\begin{equation}\label{eqMultiCompononentMassConserveDiffusionChoiceA}
\mathcal{M}_{i}=\mathcal{M}_{ii}=\frac{D_in_i}{RT},
\end{equation}
where  $R$ stands for the universal gas constant and  $D_i>0$ is the diffusion coefficient of component $i$. The  diffusion flux  has a form \cite{Cogswell2010thesis,kousun2018Flash} as
$\J_i  = -\frac{D_in_i}{RT}\grad\mu_i$.
 In this case, $\u$ and $\J_i$ is viewed as the mean velocity and general  mixture diffusion fluxes at the constant temperature  and pressure, respectively. 
\item[(A2)]     The second choice is to take $\bm{\mathcal{M}}$   as a full matrix 
\begin{eqnarray}\label{eqMultiCompononentMassConserveDiffusionChoiceB}
\mathcal{M}_{ii} = \sum_{j=1}^M\frac{\mathcal{D}_{ij} n_in_j}{nRT},~~~~~\mathcal{M}_{ij} = -\frac{\mathcal{D}_{ij} n_in_j}{nRT},~~j\neq i, 
\end{eqnarray}
where  the mole diffusion coefficients $\mathcal{D}_{ij}$ satisfy $\mathcal{D}_{ii}=0$ and $\mathcal{D}_{ij}=\mathcal{D}_{ji}>0$ for $i\neq j$.  In this case, $\u$ is  the molar-average velocity.
\item[(A3)]  The third mobility $\bm{\mathcal{M}}$ has the following formulation
\begin{eqnarray}\label{eqMultiCompononentMassConserveDiffusionChoiceC}
\mathcal{M}_{ii} = \sum_{j=1}^M\frac{\mathscr{D}_{ij}n_i\rho_j}{M_{w,i}\rho RT},~~~~~\mathcal{M}_{ij} = -\frac{\mathscr{D}_{ij}n_in_j}{\rho RT},~~j\neq i, 
\end{eqnarray}
where   the mass diffusion coefficients $\mathscr{D}_{ij}$ satisfy $\mathscr{D}_{ii}=0$ and $\mathscr{D}_{ij}=\mathscr{D}_{ji}>0$ for $i\neq j$. 
In this case, $\u$ is actually  the mass-average velocity.
\end{description}
 
We now introduce the thermodynamically-consistent  momentum balance equation, which is expressed as  \cite{kouandsun2017modeling} 
\begin{eqnarray}\label{eqGeneralNSEQ}
&&\rho\(\frac{\partial  \u}{\partial t}+ \u\cdot\grad{ \u}\)+\sum_{i=1}^MM_{w,i}\J_i\cdot\grad\u=-\nabla p +\nabla\(\lambda\div\u\)\nonumber\\
&&~~+\div\eta\(\nabla\u+\nabla\u^T\)
-\sum_{i,j=1}^M\div\(c_{ij}\nabla n_i\otimes\nabla n_j\),
\end{eqnarray}
where  $\lambda=\xi-\frac{2}{3}\eta$,  and  $\xi$ and   $\eta$ represent the volumetric  viscosity  and   shear viscosity respectively. We assume that $\eta>0$ and $\xi>\frac{2}{3}\eta$, and thus $\lambda>0$.
If $\u$ is the mass-average velocity, the term   $\sum_{i=1}^MM_{w,i}\J_i\cdot\grad\u$ vanishes, while for the other types of $\u$, it  is crucial  to ensure the thermodynamical consistency.
 It is proved in \cite{kouandsun2017modeling} that the gradients of  the pressure and chemical potentials  have the following relation 
\begin{eqnarray}\label{eqPresChptlMultiGrad01}
 \sum_{i=1}^Mn_i\grad \mu_i 
 =\grad p+\sum_{i,j=1}^M\div\(c_{ij}\grad n_i\otimes\grad n_j\),
\end{eqnarray}
and then we   reformulate the momentum conservation equation \eqref{eqGeneralNSEQ}  as
\begin{eqnarray}\label{eqGeneralNSEQb}
&&\rho\(\frac{\partial  \u}{\partial t}+ \u\cdot\grad{ \u}\)+\sum_{i=1}^MM_{w,i}\J_i\cdot\grad\u=-\sum_{i=1}^Mn_i\grad \mu_i\nonumber\\
   &&~~+\nabla\(\lambda\div\u\) +\div\eta\(\nabla\u+\nabla\u^T\) ,
\end{eqnarray}
which shows that the  fluid motion is  driven by the  chemical potential gradients.

In this work, we consider numerical schemes for the model formulated by  \eqref{eqGeneralNSEQMass} and \eqref{eqGeneralNSEQb} coupling with  the chemical potential \eqref{eqgeneralchemicalpotential} and  the diffusion flux  \eqref{eqMultiCompononentMassConserveDiffusion}. For the boundary conditions, we assume that   all boundary terms in \eqref{eqGeneralNSEQMass} and \eqref{eqGeneralNSEQb} will vanish when integrating by parts is performed; for example, we can use homogeneous Neumann boundary conditions or periodic boundary conditions.

We assume that $\Omega\subset \R^d (1\leq d\leq3)$ is an open, bounded and connected domain with the sufficiently smooth boundary $\partial\Omega$.  The Helmholtz free energy and  kinetic energy within $\Omega$ at a specified time are defined as
\begin{eqnarray}\label{eqPDETotalEnergy}
&F=F_b+F_\grad,~~F_b=\int_{\Omega} f_b(\n)d\x,~~F_\grad=\frac{1}{2}\sum_{i,j=1}^M\int_{\Omega} c_{ij}\grad n_i\cdot\grad n_jd\x,\nonumber\\
&E=\frac{1}{2}\int_{\Omega}\rho|\u|^2d\x.
\end{eqnarray}
It is proved in \cite{kouandsun2017modeling} that
the total energy, i.e. the sum  of    the Helmholtz free energy and  kinetic energy,    is dissipated with time as
\begin{eqnarray}\label{eqPDETotalEnergyDecay}
\frac{\partial (F+E)}{\partial t} \leq 0.
\end{eqnarray}

In order to use the scalar auxiliary variable (SAV) approach \cite{Shen2018SAV} , we define $H(t)=\sqrt{F_b+\sum_{i=1}^MC_{T,i}N_i^t}$, where  $N_i^t=\int_\Omega n_id\x$.  Here,  $C_{T,i}\geq0$   is the thermodynamical coefficient  of component $i$ to ensure  $F_b+\sum_{i=1}^MC_{T,i}N_i^t\geq0$, and the choice of $C_{T,i}\geq0$ may depend on $T$ but independent of molar densities. Then the chemical potential of component $i~(1\leq i\leq M)$ can be reformulated as
\begin{subequations}\label{eqDiscreteChemicalPotential01}
 \begin{equation}\label{eqDiscreteChemicalPotential01A}
\mu_i=\frac{H(t)}{\sqrt{F_b+\sum_{j=1}^MC_{T,j}N_j^t}}\mu_i^{b}-\sum_{j=1}^M\div\(c_{ij}\grad{n_j}\),
  \end{equation}
  \begin{equation}\label{eqDiscreteChemicalPotential01A}
\frac{\partial H}{\partial t}=\sum_{i=1}^M\int_\Omega\frac{\mu_i^{b}}{2\sqrt{F_b+\sum_{j=1}^MC_{T,j}N_j^t}}\frac{\partial n_i}{\partial t}d\x.
  \end{equation}
 \end{subequations}
 The modified Helmholtz free energy  is  defined as  
 \begin{equation*}
\mathcal{F}=H^2+F_\grad-\sum_{i=1}^MC_{T,i}N_i^t.
  \end{equation*}
  In the continuous model, we have $\mathcal{F}=F$, but at the time-discrete level, the modified Helmholtz free energy may be generally different from the original Helmholtz free energy.

\section{Energy-stable numerical methods}
In this section,  we aim to develop efficient energy-dissipated semi-implicit time marching scheme for simulating  the above multi-component flow model. The key difficulties  result from the strong nonlinearity  of Helmholtz free energy density and fully coupling relations between molar densities and velocity.   In this work,  our purpose is to uncouple  this tightly coupling relations between molar densities and velocity,  and from this, we will develop linearly decoupled numerical schemes preserving  the feature of energy dissipation.

For a given  time interval $\mathcal{I}=(0,T_{f}]$, where $T_{f}>0$, we divide $\mathcal{I}$ into   $\mathcal{N}$ subintervals $\mathcal{I}_{k}=(t_{k},t_{k+1}]$, where $t_{0}=0$ and $t_{\mathcal{N}}=T_{f}$, and we denote $\delta t_{k}=t_{k+1}-t_{k}$.  For any scalar $v(t)$ or vector $\bm{v}(t)$, we denote by $v^{k}$ or $\bm{v}^{k}$ its approximation at the time $t_{k}$.
The traditional notations $\(\cdot,\cdot\)$ and $\|\cdot\|$ are used to represent the  inner product and norm of $L^2\(\Omega\)$, $\(L^2\(\Omega\)\)^d$ or $\(L^2\(\Omega\)\)^{d\times d}$ respectively.

\subsection{Velocity-density decoupled  semi-implicit  scheme}\label{secVelocityDensityDecoupledScheme}
We try to develop a linear  semi-implicit  scheme that decouples the tight relationship between molar densities and velocity. This scheme allows us to solve the mass balance equations and momentum equation separately.  This scheme can be applied for the model problems with the general diffusion mobility, especially  the cases that the mobility is a full tensor. 

We denote $\n^{k}= [n_1^k,n_2^k,\cdots,n_M^k]^T$, and define  $\mu_i^{k+1}$  as
 \begin{subequations}\label{eqDiscreteChemicalPotential01}
 \begin{equation}\label{eqDiscreteChemicalPotential01A}
\mu_i^{k+1}=\frac{H^{k+1}+H^k}{2\sqrt{F_b(\n^k)+\sum_{j=1}^MC_{T,j}N_j^t}}\mu_i^{b}\(\n^k\)-\sum_{j=1}^M\div\(c_{ij}\grad{n_j^{k+1}}\),
  \end{equation}
  \begin{equation}\label{eqDiscreteChemicalPotential01B}
\frac{H^{k+1}-H^k}{\delta t_k}=\sum_{i=1}^M\int_\Omega\frac{\mu_i^{b}\(\n^k\)}{2\sqrt{F_b(\n^k)+\sum_{j=1}^MC_{T,j}N_j^t}}\frac{n_i^{k+1}-n_i^k}{\delta t_k}d\x.
  \end{equation}
 \end{subequations}
  Furthermore, we define an intermediate  velocity $\u_\star^{k}$ as
\begin{eqnarray}\label{eqDecoupledDiscreteVelocityStar}
\u_\star^{k} = \u^k-\frac{\delta t_{k}}{\rho^k}\sum_{i=1}^M n_i^{k}\grad \mu_i^{k+1}.
\end{eqnarray}
We note that $\u_\star^{k}$ can be viewed as an approximation of $\u^{k+1} $ obtained by neglecting the three parts: the convection term, $\sum_{i=1}^MM_{w,i}\J_i^{k+1}\cdot\grad\u^{k+1}$, and the viscosity terms,  in \eqref{eqDecoupledDiscreteMultiComponentMomentumConserveC01}.

Using the intermediate  velocity $\u_\star^{k}$, we construct the semi-implicit time   scheme for the molar density balance equation \eqref{eqGeneralNSEQMass}    as
 \begin{subequations}\label{eqDecoupledDiscreteMultiCompononentMassConserve01}
 \begin{equation}\label{eqDecoupledDiscreteMultiCompononentMassConserve01Main}
 \frac{ n_i^{k+1}-n_i^{k}}{\delta t_{k}}+\div(n_i^{k}\u_\star^{k})+\div\J_i^{k+1} =0,
 \end{equation}
 \begin{equation}\label{eqDecoupledDiscreteMultiCompononentMassConserve01Diffusion}
\J_i^{k+1} = -\sum_{j=1}^M\mathcal{M}^k_{ij}\nabla \mu_{j}^{k+1},
\end{equation}
\end{subequations}
where we denote by $\mathcal{M}^k_{ij}$ the mobility coefficients calculated from molar densities $\n^k$ since the mobility coefficients  $\mathcal{M}_{ij}$, generally depending on  molar densities, can be treated explicitly. 

 We can see that  only $\n^{k+1}$ is  the unknown variable of the equations \eqref{eqDecoupledDiscreteMultiCompononentMassConserve01}. This means that the use of $\u_\star^{k}$  eliminates the tight coupling relationship between molar densities and velocity.  We can solve  \eqref{eqDecoupledDiscreteMultiCompononentMassConserve01}   to obtain molar densities $\n^{k+1} $. Once  $\n^{k+1}$   is calculated,  we can get $\mu_i^{k+1}$,  $\J_i^{k+1}$ and $\u_\star^{k} $ from \eqref{eqDiscreteChemicalPotential01}, \eqref{eqDecoupledDiscreteMultiCompononentMassConserve01Diffusion} and  \eqref{eqDecoupledDiscreteVelocityStar} respectively.  A semi-implicit scheme for the momentum balance equation \eqref{eqGeneralNSEQb} is  formulated as
\begin{eqnarray}\label{eqDecoupledDiscreteMultiComponentMomentumConserveC01}
&& \rho^{k}\frac{\u^{k+1}-\u^{k}}{\delta t_{k}}+\rho^{k}\u_\star^{k}\cdot\grad\u^{k+1}+\sum_{i=1}^MM_{w,i}\J_i^{k+1}\cdot\grad\u^{k+1}=-\sum_{i=1}^Mn_i^{k}\grad \mu_i^{k+1}\nonumber\\
   &&~~+\nabla\(\lambda^k\div\u^{k+1}\)+\div\eta^k\(\nabla\u^{k+1}+\(\nabla\u^{k+1}\)^T\),
\end{eqnarray}
which is a linear equation of velocity $\u^{k+1}$ only.   In the convection term of \eqref{eqDecoupledDiscreteMultiComponentMomentumConserveC01}, the use of $\u_\star^{k}$ instead of $\u^k$ is consistent with the mass balance equations as shown in the proof of Theorem \ref{thmDecoupledSchemeEnergy}, and moreover, it avoids to use the existing approach in \cite{Minjeaud2013Uncoupled,shen2015SIAM} that needs to impose the overall mass  equation into the momentum  equation for the sake of achieving energy dissipation for phase-field models with the large density ratios. We note that this treatment (i.e., using $\u_\star^{k}$  instead of $\u^k$ in the convection term of the momentum  equation) can be directly  applied to the phase-field models with different densities.

We now prove that the above linearly  decoupled scheme satisfies the discrete energy dissipation law.  To do this, we define the discrete  kinetic energy and the modified  Helmholtz free energy  as
\begin{subequations}\label{eqDiscreteTotalEnergy}
\begin{eqnarray}\label{eqDiscreteKineticEnergy}
E^k=\frac{1}{2}\int_{\Omega}\rho^k|\u^k|^2d\x,
\end{eqnarray}
\begin{eqnarray}\label{eqDiscreteHelmholtz}
\mathcal{F}^k=|H^k|^2+F^k_\grad-\sum_{i=1}^MC_{T,i}N_i^t,~~~F_\grad^{k}=\frac{1}{2}\int_{\Omega}\sum_{i,j=1}^Mc_{ij}\grad n_i^k\cdot\grad n_j^kd\x.
\end{eqnarray}
\end{subequations}

\begin{thm}\label{thmDecoupledSchemeEnergy}
The modified  total (free) energy, i.e., the sum of    the modified  Helmholtz free energy and   kinetic energy,  determined by  \eqref{eqDecoupledDiscreteMultiCompononentMassConserve01} and \eqref{eqDecoupledDiscreteMultiComponentMomentumConserveC01} associated with \eqref{eqDiscreteChemicalPotential01}  and \eqref{eqDecoupledDiscreteVelocityStar}, is dissipated with time steps, i.e.
\begin{eqnarray}\label{eqDecoupledDiscreteTotalEnergyDecay}
E^{k+1}+\mathcal{F}^{k+1} \leq E^{k}+\mathcal{F}^{k}.
\end{eqnarray}

\end{thm}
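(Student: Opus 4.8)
The plan is to reproduce the continuous estimate \eqref{eqPDETotalEnergyDecay} at the discrete level by testing the mass balance equations against the discrete chemical potentials and the momentum equation against $\u^{k+1}$, and then adding the two identities so the coupling terms cancel. First I would test \eqref{eqDecoupledDiscreteMultiCompononentMassConserve01Main} with $\mu_i^{k+1}$, integrate over $\Omega$, and sum over $i$. The time-difference term $\sum_i\int_\Omega(n_i^{k+1}-n_i^k)\mu_i^{k+1}\,d\x$ splits, via the definition of $\mu_i^{k+1}$, into a bulk part and a gradient part. In the bulk part the constant prefactor $(H^{k+1}+H^k)/\big(2\sqrt{F_b(\n^k)+\sum_j C_{T,j}N_j^t}\big)$ factors out of the integral, and the SAV update \eqref{eqDiscreteChemicalPotential01B} converts the remaining integral into $2\sqrt{F_b(\n^k)+\sum_j C_{T,j}N_j^t}\,(H^{k+1}-H^k)$, so the whole bulk part collapses to exactly $|H^{k+1}|^2-|H^k|^2$. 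This is where the SAV construction does its work: the strongly nonlinear $f_b$ is never differentiated implicitly.

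For the gradient part, integrating by parts (boundary terms vanishing by assumption) produces the symmetric positive semidefinite form $\sum_{i,j}\int_\Omega c_{ij}\grad(n_i^{k+1}-n_i^k)\cdot\grad n_j^{k+1}\,d\x$, to which the polarization identity $a(a-b)=\tfrac12(|a|^2-|b|^2+|a-b|^2)$ applies, yielding $F_\grad^{k+1}-F_\grad^k$ plus a nonnegative quadratic remainder. Since summing \eqref{eqDecoupledDiscreteMultiCompononentMassConserve01Main} over $\Omega$ and using the boundary conditions gives $N_i^{k+1}=N_i^k$, the constants $\sum_i C_{T,i}N_i^t$ cancel and the bulk and gradient contributions combine into $\mathcal{F}^{k+1}-\mathcal{F}^k$ plus a nonnegative term. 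The diffusion term, after integration by parts and \eqref{eqDecoupledDiscreteMultiCompononentMassConserve01Diffusion}, becomes $\sum_{i,j}\int_\Omega\mathcal{M}^k_{ij}\grad\mu^{k+1}_i\cdot\grad\mu^{k+1}_j\,d\x\ge0$ by semidefiniteness of $\mathcal{M}^k$. What is left is the convection term $-\int_\Omega\u_\star^k\cdot\big(\sum_i n_i^k\grad\mu_i^{k+1}\big)\,d\x$, so this half gives $(\mathcal{F}^{k+1}-\mathcal{F}^k)/\delta t_k\le\int_\Omega\u_\star^k\cdot\bm P\,d\x$, with $\bm P:=\sum_i n_i^k\grad\mu_i^{k+1}$.

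Next I would test \eqref{eqDecoupledDiscreteMultiComponentMomentumConserveC01} with $\u^{k+1}$. The viscous terms integrate by parts to $-\int_\Omega\lambda^k|\div\u^{k+1}|^2\,d\x-\int_\Omega\eta^k(\grad\u^{k+1}+(\grad\u^{k+1})^T):\grad\u^{k+1}\,d\x\le0$, using $\lambda^k,\eta^k>0$ and rewriting the last integrand as $\tfrac12|\grad\u^{k+1}+(\grad\u^{k+1})^T|^2$. The delicate point is inertial: the discrete time derivative contributes $\tfrac12\int_\Omega\rho^k|\u^{k+1}|^2$ with the wrong density level, whereas $E^{k+1}$ carries $\rho^{k+1}$. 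This is reconciled precisely because the convection uses $\u_\star^k$: writing convection plus diffusion-convection as $\int_\Omega(\bm Q^{k+1}\cdot\grad\u^{k+1})\cdot\u^{k+1}\,d\x$ with $\bm Q^{k+1}=\rho^k\u_\star^k+\sum_i M_{w,i}\J_i^{k+1}$, integrating by parts, and invoking $\div\bm Q^{k+1}=-(\rho^{k+1}-\rho^k)/\delta t_k$ (the mass balance summed with weights $M_{w,i}$), the density discrepancy is converted into exactly the increment that upgrades $\rho^k$ to $\rho^{k+1}$. Thus the inertial terms give $(E^{k+1}-E^k)/\delta t_k+\tfrac{1}{2\delta t_k}\int_\Omega\rho^k|\u^{k+1}-\u^k|^2\,d\x$, and this half yields $(E^{k+1}-E^k)/\delta t_k\le-\int_\Omega\u^{k+1}\cdot\bm P\,d\x-\tfrac{1}{2\delta t_k}\int_\Omega\rho^k|\u^{k+1}-\u^k|^2\,d\x$.

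Finally I would add the two inequalities. The residual coupling is $\int_\Omega(\u_\star^k-\u^{k+1})\cdot\bm P\,d\x$, and here \eqref{eqDecoupledDiscreteVelocityStar} enters through $\bm P=\rho^k(\u^k-\u_\star^k)/\delta t_k$, turning it into $\tfrac{1}{\delta t_k}\int_\Omega\rho^k(\u_\star^k-\u^{k+1})\cdot(\u^k-\u_\star^k)\,d\x$. Combining with the leftover kinetic remainder $-\tfrac{1}{2\delta t_k}\int_\Omega\rho^k|\u^{k+1}-\u^k|^2\,d\x$ and completing the square, the integrand equals $-\rho^k|\u_\star^k-\tfrac12(\u^{k+1}+\u^k)|^2-\tfrac14\rho^k|\u^{k+1}-\u^k|^2\le0$ since $\rho^k\ge0$, which delivers \eqref{eqDecoupledDiscreteTotalEnergyDecay}. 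I expect the main obstacle to be exactly this inertial/coupling bookkeeping: getting the density level right through $\u_\star^k$ in the convection term and then verifying the completing-the-square remainder is sign-definite; everything else (the SAV collapse, integration by parts, and semidefiniteness of $c_{ij}$, $\mathcal{M}^k$, and the viscous form) is routine.
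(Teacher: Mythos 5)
Your proposal is correct and follows essentially the same route as the paper's proof: the SAV collapse of the bulk term, the polarization estimate for the gradient term under positive semi-definiteness of $(c_{ij})$, testing the mass equations with $\mu_i^{k+1}$ and the momentum equation with $\u^{k+1}$, the skew-symmetric cancellation through the $M_{w,i}$-weighted overall mass balance, and the identity $\sum_{i=1}^M n_i^k\grad\mu_i^{k+1}=\rho^k(\u^k-\u_\star^k)/\delta t_k$ are exactly the paper's ingredients. The only difference is bookkeeping: the paper routes the kinetic part through the intermediate energy $E_\star^{k}=\frac{1}{2}\(\rho^k\u_\star^k,\u_\star^k\)$ so that the coupling term $\sum_{i=1}^M\(\div(n_i^k\u_\star^k),\mu_i^{k+1}\)$ cancels exactly, whereas you keep the inertial remainder and complete the square at the end; your final sign-definite remainder $-\rho^k\big|\u_\star^k-\tfrac{1}{2}(\u^{k+1}+\u^k)\big|^2-\tfrac{1}{4}\rho^k|\u^{k+1}-\u^k|^2$ coincides exactly with the two quadratic terms $-\tfrac{1}{2}\rho^k|\u^{k+1}-\u_\star^k|^2-\tfrac{1}{2}\rho^k|\u_\star^k-\u^k|^2$ that the paper discards in \eqref{eqDecoupledDiscreteMultiComponentMomentumConserveProof01} and \eqref{eqDecoupledDiscreteMultiCompononentMassConserveProof05}, so the two arguments are algebraically equivalent.
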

\begin{proof}
We first estimate the difference between $|H^{k+1}|^2$ and $|H^k|^2$ using \eqref{eqDiscreteChemicalPotential01B} as
\begin{align}\label{eqCHEConvexSplittingPoreEnergyDecayProof03}
|H^{k+1}|^2-|H^{k}|^2&= (H^{k+1}+H^k)\(H^{k+1}-H^{k}\)\nonumber\\
&= \sum_{i=1}^M\(\frac{\(H^{k+1}+H^k\)\mu_i^{b}\(\n^k\)}{2\sqrt{F_b(\n^k)+C_{T,i}N_i^t}},n_i^{k+1}-n_i^k\).
\end{align}
Since the influence parameter matrix $\big(c_{ij}\big)_{i,j=1}^M$ is symmetric and it is positive definite or positive semi-definite,  we have
\begin{align}\label{eqDiscreteMultiCompononentMassConserveProof01}
F_\grad^{k+1}-F_\grad^{k}&=\frac{1}{2}\int_{\Omega}\sum_{i,j=1}^Mc_{ij}\(\grad n_i^{k+1}\cdot\grad n_j^{k+1}-\grad n_i^k\cdot\grad n_j^k\)d\x\nonumber\\
&=\frac{1}{2}\int_{\Omega}\sum_{i,j=1}^Mc_{ij}\(\grad \(n_i^{k+1}-n_i^k\)\cdot\grad n_j^{k+1}+\grad n_i^k\cdot\grad \(n_j^{k+1}-n_j^k\)\)d\x\nonumber\\
&=\sum_{i,j=1}^M\(\grad \(n_i^{k+1}-n_i^k\),c_{ij}\grad n_j^{k+1}\)- \sum_{i,j=1}^M\(c_{ij}\grad \(n_i^{k+1}-n_i^k\),\grad \(n_j^{k+1}-n_j^k\)\)\nonumber\\
&\leq\sum_{i,j=1}^M\(\grad \(n_i^{k+1}-n_i^k\),c_{ij}\grad n_j^{k+1}\)\nonumber\\
&\leq-\sum_{i,j=1}^M\(n_i^{k+1}-n_i^{k},\div\(c_{ij}\grad{n_j^{k+1}}\)\).
\end{align}
The inequalities \eqref{eqCHEConvexSplittingPoreEnergyDecayProof03} and  \eqref{eqDiscreteMultiCompononentMassConserveProof01} yield
\begin{equation}\label{eqDiscreteMultiCompononentMassConserveProof02}
\mathcal{F}^{k+1}-\mathcal{F}^{k}=|H^{k+1}|^2-|H^{k}|^2+F_\grad^{k+1}-F_\grad^{k}\leq\sum_{i=1}^M\(\mu_i^{k+1} ,n_i^{k+1}-n_i^k\).
\end{equation}
Substituting \eqref{eqDecoupledDiscreteMultiCompononentMassConserve01} into \eqref{eqDiscreteMultiCompononentMassConserveProof02}, we derive
\begin{align}\label{eqDiscreteMultiCompononentMassConserveProof03}
\frac{\mathcal{F}^{k+1}-\mathcal{F}^{k}}{\delta t_{k}}&\leq-\sum_{i=1}^M\(\div (n_i^{k}\u_\star^{k})+\div\J_i^{k+1},\mu_i^{k+1}\)\nonumber\\
&\leq-\sum_{i=1}^M\(\div (n_i^{k}\u_\star^{k}),\mu_i^{k+1}\) -\sum_{i,j=1}^{M}\big(\mathcal{M}^{k}_{ij}\nabla \mu_{i}^{k+1},\grad\mu_j^{k+1}\big).
\end{align}

We now turn to consider  the difference between $E^{k+1}$ and $E^k$.  We introduce the intermediate  kinetic energy as $$E_\star^{k}=\frac{1}{2}\(\rho^k\u_\star^k,\u_\star^k\).$$
The difference between $E^{k+1}$ and $E_\star^k$ is estimated as
 \begin{align}\label{eqDecoupledDiscreteMultiComponentMomentumConserveProof01}
E^{k+1}-E_\star^{k}&=\frac{1}{2}\(\rho^{k+1},|\u^{k+1}|^2\)-\frac{1}{2}\(\rho^k,|\u_\star^{k}|^2\)\nonumber\\
&=\frac{1}{2}\(\rho^{k},|\u^{k+1}|^2-|\u_\star^{k}|^2\)+\frac{1}{2}\(\rho^{k+1}-\rho^{k},|\u^{k+1}|^2\)\nonumber\\
   &= \(\rho^{k}\(\u^{k+1}-\u_\star^{k}\),\u^{k+1}\)-\frac{1}{2}\(\rho^{k},|\u^{k+1}-\u_\star^{k}|^2\)\nonumber\\
   &~~~+\frac{1}{2}\(\rho^{k+1}-\rho^{k},|\u^{k+1}|^2\)\nonumber\\
    &\leq \(\rho^{k}\(\u^{k+1}-\u_\star^{k}\),\u^{k+1}\)+\frac{1}{2}\(\rho^{k+1}-\rho^{k},|\u^{k+1}|^2\).
\end{align}
On the other hand,  we have the overall mass balance equation as
\begin{eqnarray}\label{eqDecoupledDiscreteMultiCompononentMassConserveProof02}
\frac{ \rho^{k+1}-\rho^{k}}{\delta t_{k}}+\div(\rho^{k}\u_\star^{k})+\sum_{i=1}^MM_{w,i}\div\J_i^{k+1}=0,
\end{eqnarray}
and taking into account the definition of $\u_\star^k$, we rewrite \eqref{eqDecoupledDiscreteMultiComponentMomentumConserveC01} as
\begin{align}\label{eqDecoupledDiscreteMultiCompononentMassConserveProof03}
 \rho^{k}\frac{\u^{k+1}-\u_\star^{k}}{\delta t_{k}}&=-\rho^{k}\u_\star^{k}\cdot\grad\u^{k+1}-\sum_{i=1}^MM_{w,i}\J_i^{k+1}\cdot\grad\u^{k+1}\nonumber\\
   &~~+\nabla\(\lambda^k\div\u^{k+1}\)+\div\eta^k\(\nabla\u^{k+1}+\(\nabla\u^{k+1}\)^T\).
\end{align}
Substituting \eqref{eqDecoupledDiscreteMultiCompononentMassConserveProof02} and \eqref{eqDecoupledDiscreteMultiCompononentMassConserveProof03} into \eqref{eqDecoupledDiscreteMultiComponentMomentumConserveProof01} yields  
\begin{align}\label{eqDecoupledDiscreteMultiCompononentMassConserveProof04}
\frac{E^{k+1}-E_\star^{k}}{\delta t^k}&\leq -\(\rho^{k}\u_\star^{k}\cdot\grad\u^{k+1}+\sum_{i=1}^MM_{w,i}\J_i^{k+1}\cdot\grad\u^{k+1},\u^{k+1}\)\nonumber\\
   &~~+\(\nabla\(\lambda^k\div\u^{k+1}\)+\div\eta^k\(\nabla\u^{k+1}+\(\nabla\u^{k+1}\)^T\),\u^{k+1}\)\nonumber\\
   &~~-\frac{1}{2}\(\div(\rho^{k}\u_\star^{k})+\sum_{i=1}^MM_{w,i}\div\J_i^{k+1},|\u^{k+1}|^2\)\nonumber\\
   &\leq- \left\|\sqrt{\lambda^k}\div\u^{k+1}\right\|^2 -\frac{1}{2} \left\|\sqrt{\eta^{k}}\(\nabla\u^{k+1}+\(\nabla\u^{k+1}\)^T\)\right\|^2.
\end{align}
We apply  the definition of $\u_\star^k$ to derive
\begin{eqnarray}\label{eqDecoupledDiscreteMultiCompononentMassConserveProof05}
E_\star^{k}-E^{k}&=&\(\rho^k\(\u_\star^{k} - \u^k\),\u_\star^k\)-\frac{1}{2}\(\rho^k,|\u_\star^{k} - \u^k|^2\)\nonumber\\
&\leq&\(\rho^k\(\u_\star^{k} - \u^k\),\u_\star^k\)\nonumber\\
&\leq&- \delta t_{k}\sum_{i=1}^M \(n_i^{k}\grad \mu_i^{k+1},\u_\star^k\)\nonumber\\
&\leq&\delta t_{k}\sum_{i=1}^M \(\div\(n_i^{k}\u_\star^k\), \mu_i^{k+1}\).
\end{eqnarray}
Combining   \eqref{eqDiscreteMultiCompononentMassConserveProof03}, \eqref{eqDecoupledDiscreteMultiCompononentMassConserveProof04} and \eqref{eqDecoupledDiscreteMultiCompononentMassConserveProof05} yields
 \begin{align}\label{eqDecoupledDiscreteMultiCompononentEnergyDecay}
\frac{E^{k+1}-E^{k}+\mathcal{F}^{k+1}-\mathcal{F}^{k}}{\delta t_{k}}
&\leq-\sum_{i,j=1}^{M}\big(\mathcal{M}^{k}_{ij}\nabla \mu_{i}^{k+1},\grad\mu_j^{k+1}\big)- \left\|\sqrt{\lambda^k}\div\u^{k+1}\right\|^2\nonumber\\
   &~~~~~~~ -\frac{1}{2} \left\|\sqrt{\eta^{k}}\(\nabla\u^{k+1}+\(\nabla\u^{k+1}\)^T\)\right\|^2\leq0,
   \end{align}
which yields  the energy dissipation \eqref{eqDecoupledDiscreteTotalEnergyDecay}. 
\end{proof}

\subsection{Component-wise, decoupled  semi-implicit  scheme}\label{secComponentwisescheme}
For the case that diffusion fluxes have a diagonal mobility tensor,    we can design  a component-wise, decoupled  semi-implicit  scheme, which  not only uncouples the tight relationship between molar densities and velocity, but also solves the mass balance equations  by a component-wise way. 

 We still use $\n^{k}= [n_1^k,n_2^k,\cdots,n_M^k]^T$ to denote the molar density vector at the integer time step $k$. Furthermore, we introduce the molar density vector at the fractional time step $\(k+\frac{i}{M}\)$ and denote it by  $ \n^{k+\frac{i}{M}}=\[n_1^{k+1},\cdots,n_i^{k+1},n_{i+1}^{k},\cdots,n_M^{k}\]^T$, where $0\leq i\leq M$;  in particular,  $ \n^{k+\frac{i}{M}}=\n^k$ for $i=0$ and $ \n^{k+\frac{i}{M}}=\n^{k+1}$ for $i=M$. The discrete chemical potential  $\mu_i^{k+\frac{i}{M}}(1\leq i\leq M)$ of component $i$ is defined  as
   \begin{subequations}\label{eqFullyDecoupledDiscreteChemicalPotential01}
 \begin{align}\label{eqFullyDecoupledDiscreteChemicalPotential01A}
\mu_i^{k+\frac{i}{M}}&=\frac{H^{k+\frac{i}{M}}+H^{k+\frac{i-1}{M}}}{2\sqrt{F_b(\n^{k})+\sum_{j=1}^MC_{T,j}N_j^t}}\mu_i^{b}\(\n^{k+\frac{i-1}{M}}\)\nonumber\\
&~~~-\sum_{j=1}^i\div\(c_{ij}\grad{n_j^{k+1}}\)-\sum_{j=i+1}^M\div\(c_{ij}\grad{n_j^{k}}\),
  \end{align}
  \begin{align}\label{eqFullyDecoupledDiscreteChemicalPotential01B}
\frac{H^{k+\frac{i}{M}}-H^{k+\frac{i-1}{M}}}{\delta t_k}=\int_\Omega\frac{\mu_i^{b}\(\n^{k+\frac{i-1}{M}}\)}{2\sqrt{F_b(\n^k)+\sum_{j=1}^MC_{T,j}N_j^t}}\frac{n_i^{k+1}-n_i^k}{\delta t_k}d\x.
  \end{align}
 \end{subequations}
   A component-wise intermediate  velocity $\u_\star^{k+\frac{i}{M}}$ is defined as
\begin{eqnarray}\label{eqFullyDecoupledDiscreteVelocityStar}
\u_\star^{k+\frac{i}{M}} = \u_\star^{k+\frac{i-1}{M}}-\frac{\delta t_{k}}{\rho^k}n_i^{k}\grad \mu_i^{k+\frac{i}{M}},~~~1\leq i\leq M,
\end{eqnarray}
where $\u_\star^{k+0}=\u_\star^{k}=\u^k$.  Let $\rho_i=M_{w,i}n_i$ be the mass density of component $i$, and then we introduce a mean intermediate  velocity $\u_{\star\star}^k$ as
\begin{eqnarray}\label{eqFullyDecoupledDiscreteVelocityStar02}
\u_{\star\star}^k = \sum_{i=1}^M\frac{\rho_i^{k}}{\rho^k}\u_\star^{k+\frac{i}{M}}.
\end{eqnarray}

We construct the semi-implicit time   scheme for the molar density balance equation \eqref{eqGeneralNSEQMass}  of component $i$  as
\begin{subequations}\label{eqFullyDecoupledScheme01}
 \begin{equation}\label{eqFullyDecoupledSchemeComponent}
 \frac{ n_i^{k+1}-n_i^{k}}{\delta t_{k}}+\div(n_i^{k}\u_\star^{k+\frac{i}{M}})+\div\J_i^{k+\frac{i}{M}} =0, 
 \end{equation}
 \begin{equation}\label{eqFullyDecoupledSchemeDiffusion}
\J_i^{k+\frac{i}{M}} = -\mathcal{M}^k_{i}\nabla \mu_{i}^{k+\frac{i}{M}},
\end{equation}
\end{subequations}
which is a linear equation of $n_i^{k+1}$ only and can be solved  sequently  from $i=1$ to $M$. The semi-implicit time scheme for the momentum balance equation is
\begin{eqnarray}\label{eqFullyDecoupledSchemeVelocity}
&& \rho^{k}\frac{\u^{k+1}-\u^{k}}{\delta t_{k}}+\rho^{k}\u_{\star\star}^k \cdot\grad\u^{k+1}+\sum_{i=1}^MM_{w,i}\J_i^{k+\frac{i}{M}}\cdot\grad\u^{k+1}=-\sum_{i=1}^Mn_i^{k}\grad \mu_i^{k+\frac{i}{M}}\nonumber\\
   &&~~+\nabla\(\lambda^k\div\u^{k+1}\)+\div\eta^k\(\nabla\u^{k+1}+\(\nabla\u^{k+1}\)^T\).
\end{eqnarray}
Summing \eqref{eqFullyDecoupledDiscreteVelocityStar} from $i=1$ to $M$ yields  
\begin{eqnarray}\label{eqFullyDecoupledDiscreteVelocityStar03}
\u_\star^{k+1} = \u^k-\frac{\delta t_{k}}{\rho^k}\sum_{i=1}^Mn_i^{k}\grad \mu_i^{k+\frac{i}{M}}.
\end{eqnarray}
Consequently, the equation \eqref{eqFullyDecoupledSchemeVelocity} can be reformulated as
\begin{eqnarray}\label{eqFullyDecoupledSchemeVelocity02}
&& \rho^{k}\frac{\u^{k+1}-\u_\star^{k+1}}{\delta t_{k}}+\rho^{k}\u_{\star\star}^k \cdot\grad\u^{k+1}+\sum_{i=1}^MM_{w,i}\J_i^{k+\frac{i}{M}}\cdot\grad\u^{k+1}\nonumber\\
   &&~~=\nabla\(\lambda^k\div\u^{k+1}\)+\div\eta^k\(\nabla\u^{k+1}+\(\nabla\u^{k+1}\)^T\).
\end{eqnarray}
This is a linear equation of velocity $\u^{k+1}$ and easy to be solved. 
In the convection term of \eqref{eqFullyDecoupledSchemeVelocity02}, we use the mean  intermediate  velocity $\u_{\star\star}^{k}$ instead of $\u^k$ or  $\u_\star^{k+\frac{i}{M}}$ to match  the mass balance equations.

It is apparent that the above component-wise  approach can be directly applied for the IEQ-based component-wise schemes and for the Cahn-Hilliard-type models studied in \cite{kousun2015CHE}.
 
We now prove that the component-wise, decoupled scheme satisfies the discrete energy dissipation law.

\begin{thm}\label{thmFullyDecoupledSchemeEnergy}
The sum of    the modified  Helmholtz free energy and   kinetic energy determined by  \eqref{eqFullyDecoupledScheme01} and \eqref{eqFullyDecoupledSchemeVelocity02} associated with \eqref{eqFullyDecoupledDiscreteChemicalPotential01},  \eqref{eqFullyDecoupledDiscreteVelocityStar} and \eqref{eqFullyDecoupledDiscreteVelocityStar02} is dissipated with time steps, i.e.
\begin{eqnarray}\label{eqFullyDecoupledDiscreteTotalEnergyDecay}
E^{k+1}+\mathcal{F}^{k+1} \leq E^{k}+\mathcal{F}^{k},
\end{eqnarray}
where $E^k$ and $\mathcal{F}^k$ are still defined as in \eqref{eqDiscreteTotalEnergy}.
\end{thm}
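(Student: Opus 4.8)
The plan is to follow the architecture of the proof of Theorem \ref{thmDecoupledSchemeEnergy}, replacing every single-step estimate by a telescoping sum over the $M$ fractional time levels $\n^{k+\frac{i}{M}}$. I would estimate the modified free energy and the kinetic energy separately and then add them, arranging matters so that the cross terms $\sum_i\(\div(n_i^k\u_\star^{k+\frac{i}{M}}),\mu_i^{k+\frac{i}{M}}\)$ cancel exactly, just as in the fully coupled case, leaving only manifestly nonpositive dissipation.

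For the free energy I would write $|H^{k+1}|^2-|H^k|^2=\sum_{i=1}^M\(|H^{k+\frac{i}{M}}|^2-|H^{k+\frac{i-1}{M}}|^2\)$ and factor each summand as $(H^{k+\frac{i}{M}}+H^{k+\frac{i-1}{M}})(H^{k+\frac{i}{M}}-H^{k+\frac{i-1}{M}})$; by \eqref{eqFullyDecoupledDiscreteChemicalPotential01B} this equals the pairing of the bulk part of $\mu_i^{k+\frac{i}{M}}$ with $n_i^{k+1}-n_i^k$. Introducing the fractional gradient energies built from $\n^{k+\frac{i}{M}}$ and telescoping likewise, the delicate step — the one I expect to be the main obstacle — is the gradient-energy increment. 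Since only the $i$-th component changes between levels $i-1$ and $i$, expanding the quadratic form and using the symmetry of $(c_{ij})$ gives $F_\grad^{k+\frac{i}{M}}-F_\grad^{k+\frac{i-1}{M}}=\sum_{j}\(\grad(n_i^{k+1}-n_i^k),c_{ij}\grad n_j^{k+\frac{i}{M}}\)-\frac{1}{2}\(c_{ii}\grad(n_i^{k+1}-n_i^k),\grad(n_i^{k+1}-n_i^k)\)$. Here one must check that the gradient term of the component-wise potential \eqref{eqFullyDecoupledDiscreteChemicalPotential01A} is exactly $-\sum_j\div(c_{ij}\grad n_j^{k+\frac{i}{M}})$ (i.e. it uses $n_j^{k+1}$ for $j\le i$ and $n_j^k$ for $j>i$), so that integration by parts pairs it with $n_i^{k+1}-n_i^k$; the leftover quadratic term is then discarded using $c_{ii}\ge0$, which holds since $(c_{ij})$ is positive (semi)definite. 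Summing over $i$ and cancelling the constant $\sum_iC_{T,i}N_i^t$ (discrete mass conservation $N_i^{k+1}=N_i^k$ follows by integrating \eqref{eqFullyDecoupledSchemeComponent}) yields $\mathcal{F}^{k+1}-\mathcal{F}^k\le\sum_i(\mu_i^{k+\frac{i}{M}},n_i^{k+1}-n_i^k)$; substituting \eqref{eqFullyDecoupledScheme01} then gives $\frac{\mathcal{F}^{k+1}-\mathcal{F}^k}{\delta t_k}\le-\sum_i\(\div(n_i^k\u_\star^{k+\frac{i}{M}}),\mu_i^{k+\frac{i}{M}}\)-\sum_i\mathcal{M}_i^k\|\grad\mu_i^{k+\frac{i}{M}}\|^2$.

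For the kinetic energy I would split $E^{k+1}-E^k=(E^{k+1}-E_\star^{k+1})+(E_\star^{k+1}-E^k)$ with $E_\star^{k+1}=\frac{1}{2}(\rho^k\u_\star^{k+1},\u_\star^{k+1})$. The first difference is handled exactly as in \eqref{eqDecoupledDiscreteMultiComponentMomentumConserveProof01}--\eqref{eqDecoupledDiscreteMultiCompononentMassConserveProof04}, using \eqref{eqFullyDecoupledSchemeVelocity02} together with the overall mass balance obtained by multiplying \eqref{eqFullyDecoupledSchemeComponent} by $M_{w,i}$ and summing. The crucial identity is $\sum_iM_{w,i}\div(n_i^k\u_\star^{k+\frac{i}{M}})=\div(\rho^k\u_{\star\star}^k)$, which follows from the definition \eqref{eqFullyDecoupledDiscreteVelocityStar02}; this is precisely why using $\u_{\star\star}^k$ in the convection term of \eqref{eqFullyDecoupledSchemeVelocity02} makes the convection contribution cancel against the $\frac12(\rho^{k+1}-\rho^k,|\u^{k+1}|^2)$ term, leaving only the nonpositive viscous dissipation $-\|\sqrt{\lambda^k}\div\u^{k+1}\|^2-\frac12\|\sqrt{\eta^k}(\grad\u^{k+1}+(\grad\u^{k+1})^T)\|^2$. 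The second difference I would telescope through the component-wise velocities: setting $E_\star^{k+\frac{i}{M}}=\frac12(\rho^k\u_\star^{k+\frac{i}{M}},\u_\star^{k+\frac{i}{M}})$ (so $E_\star^{k+0}=E^k$), each increment satisfies $E_\star^{k+\frac{i}{M}}-E_\star^{k+\frac{i-1}{M}}\le(\rho^k(\u_\star^{k+\frac{i}{M}}-\u_\star^{k+\frac{i-1}{M}}),\u_\star^{k+\frac{i}{M}})$, and inserting \eqref{eqFullyDecoupledDiscreteVelocityStar} followed by integration by parts gives $E_\star^{k+\frac{i}{M}}-E_\star^{k+\frac{i-1}{M}}\le\delta t_k(\div(n_i^k\u_\star^{k+\frac{i}{M}}),\mu_i^{k+\frac{i}{M}})$.

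Finally I would add the three estimates. The terms $\sum_i(\div(n_i^k\u_\star^{k+\frac{i}{M}}),\mu_i^{k+\frac{i}{M}})$ appearing in the free-energy bound and in the $E_\star^{k+1}-E^k$ bound cancel, leaving $\frac{E^{k+1}-E^k+\mathcal{F}^{k+1}-\mathcal{F}^k}{\delta t_k}\le-\sum_i\mathcal{M}_i^k\|\grad\mu_i^{k+\frac{i}{M}}\|^2-\|\sqrt{\lambda^k}\div\u^{k+1}\|^2-\frac12\|\sqrt{\eta^k}(\grad\u^{k+1}+(\grad\u^{k+1})^T)\|^2\le0$, which is precisely \eqref{eqFullyDecoupledDiscreteTotalEnergyDecay}.
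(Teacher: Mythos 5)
Your proposal is correct and follows essentially the same route as the paper's proof: telescoping the SAV term and the gradient energy over the fractional levels (with the same gradient-energy increment identity and the same discarding of the nonnegative $c_{ii}$-quadratic term), substituting the component-wise mass balance, telescoping the intermediate kinetic energies $E_\star^{k+\frac{i}{M}}$, and using the overall mass balance built from $\u_{\star\star}^k$ so that the convection contribution cancels, leaving only the nonpositive diffusive and viscous dissipation. The only cosmetic difference is that you make explicit the discrete mass conservation used to cancel the $\sum_{i=1}^M C_{T,i}N_i^t$ terms, which the paper leaves implicit.
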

\begin{proof}
Using \eqref{eqFullyDecoupledDiscreteChemicalPotential01B}, we derive the   difference between $|H^{k+\frac{i}{M}}|^2$ and $|H^{k+\frac{i-1}{M}}|^2~ (1\leq i\leq M)$ as 
\begin{align}\label{eqFullyDecoupledSchemeEnergyProof01}
|H^{k+\frac{i}{M}}|^2-|H^{k+\frac{i-1}{M}}|^2&=   \(H^{k+\frac{i}{M}}+H^{k+\frac{i-1}{M}}\)\(H^{k+\frac{i}{M}}-H^{k+\frac{i-1}{M}}\)\nonumber\\
&=  \(\frac{\(H^{k+\frac{i}{M}}+H^{k+\frac{i-1}{M}}\)\mu_i^{b}\(\n^{k+\frac{i-1}{M}}\)}{2\sqrt{F_b(\n^k)+\sum_{j=1}^MC_{T,j}N_j^t}},n_i^{k+1}-n_i^k\).
\end{align}
The gradient contribution of Helmholtz free energy at the time step $(k+\frac{i}{M})$ can be expressed as
\begin{eqnarray}\label{eqFullyDecoupledSchemeEnergyProof02}
F_\grad^{k+\frac{i}{M}}&=&\frac{1}{2}\int_{\Omega}\sum_{j,l=1}^i c_{jl}\grad n_j^{k+1}\cdot\grad n_l^{k+1}d\x+\frac{1}{2}\int_{\Omega}\sum_{j,l=i+1}^Mc_{jl}\grad n_j^k\cdot\grad n_l^kd\x\nonumber\\
&&+\sum_{j=1}^i\sum_{l=i+1}^M\int_{\Omega}c_{jl}\grad n_j^{k+1}\cdot\grad n_l^kd\x.
\end{eqnarray}
Taking into account  $c_{ij}=c_{ji}$ and $c_{ij}>0$,  we derive
\begin{eqnarray}\label{eqFullyDecoupledSchemeEnergyProof03}
F_\grad^{k+\frac{i}{M}}-F_\grad^{k+\frac{i-1}{M}}&=&\frac{1}{2}\int_{\Omega}c_{ii}\(\grad n_i^{k+1}\cdot\grad n_i^{k+1}-\grad n_i^k\cdot\grad n_i^k\)d\x\nonumber\\
&&+\int_{\Omega}\sum_{j=1}^{i-1} c_{ij}\grad \(n_i^{k+1}-n_i^k\)\cdot\grad n_j^{k+1}d\x\nonumber\\
&&+\int_{\Omega}\sum_{j=i+1}^{M} c_{ij}\grad \(n_i^{k+1}-n_i^k\)\cdot\grad n_j^{k}d\x\nonumber\\
&\leq&\sum_{j=1}^{i}\(\grad \(n_i^{k+1}-n_i^k\),c_{ij}\grad n_j^{k+1}\)\nonumber\\
&&+\sum_{j=i+1}^{M}\(\grad \(n_i^{k+1}-n_i^k\),c_{ij}\grad n_j^{k}\)\nonumber\\
&\leq&-\(n_i^{k+1}-n_i^{k},\sum_{j=1}^{i}\div c_{ij}\grad{n_j^{k+1}}+\sum_{j=i+1}^{M}\div c_{ij}\grad{n_j^{k}}\).
\end{eqnarray}
By the definition of $\mu_i^{k+\frac{i}{M}}$ given in \eqref{eqFullyDecoupledDiscreteChemicalPotential01}, we obtain from the estimates \eqref{eqFullyDecoupledSchemeEnergyProof01} and  \eqref{eqFullyDecoupledSchemeEnergyProof03} that
\begin{align}\label{eqFDCHEConvexSplittingPoreEnergyDecayProof05}
&|H^{k+\frac{i}{M}}|^2-|H^{k+\frac{i-1}{M}}|^2+F_\grad^{k+\frac{i}{M}}-F_\grad^{k+\frac{i-1}{M}}\nonumber\\
&~~\leq \(\mu_i^{k+\frac{i}{M}} ,n_i^{k+1}-n_i^k\)\nonumber\\
&~~\leq -\delta t_k\(\mu_i^{k+\frac{i}{M}} ,\div(n_i^{k}\u_\star^{k+\frac{i}{M}})-\div\mathcal{M}^k_{i}\nabla \mu_{i}^{k+\frac{i}{M}}\)\nonumber\\
&~~\leq \delta t_k\(\u_\star^{k+\frac{i}{M}} , n_i^{k}\grad\mu_i^{k+\frac{i}{M}}\) -\delta t_k\left\|\sqrt{\mathcal{M}^k_{i}}\nabla \mu_{i}^{k+\frac{i}{M}}\right\|^2,
\end{align}
where the second equality is obtained by using \eqref{eqFullyDecoupledScheme01}.
Summing  up \eqref{eqFDCHEConvexSplittingPoreEnergyDecayProof05} from $i=1$ to $M$   yields 
\begin{align}\label{eqFDCHEConvexSplittingPoreEnergyDecayProof06}
\mathcal{F}^{k+1}-\mathcal{F}^{k}&=|H^{k+1}|^2-|H^{k}|^2+F_\grad^{k+1} -F_\grad^{k}\nonumber\\
&=\sum_{i=1}^M\(|H^{k+\frac{i}{M}}|^2-|H^{k+\frac{i-1}{M}}|^2+F_\grad^{k+\frac{i}{M}}-F_\grad^{k+\frac{i-1}{M}}\)\nonumber\\
&\leq \delta t_k\sum_{i=1}^M\(\u_\star^{k+\frac{i}{M}} , n_i^{k}\grad\mu_i^{k+\frac{i}{M}}\) -\delta t_k\sum_{i=1}^M\left\|\sqrt{\mathcal{M}^k_{i}}\nabla \mu_{i}^{k+\frac{i}{M}}\right\|^2.
\end{align}

We define the intermediate  kinetic energy as $$E_\star^{k+\frac{i}{M}}=\frac{1}{2}\(\rho^k\u_\star^{k+\frac{i}{M}},\u_\star^{k+\frac{i}{M}}\).$$ 
Using the definition \eqref{eqFullyDecoupledDiscreteVelocityStar} of intermediate velocities, we derive
\begin{eqnarray}\label{eqFullyDecoupledDecoupledExplicitDiscreteVelocityStarProof01}
E_\star^{k+\frac{i}{M}}-E_\star^{k+\frac{i-1}{M}}&=&\frac{1}{2}\(\rho^k\u_\star^{k+\frac{i}{M}},\u_\star^{k+\frac{i}{M}}\)-\frac{1}{2}\(\rho^k\u_\star^{k+\frac{i-1}{M}},\u_\star^{k+\frac{i-1}{M}}\)\nonumber\\
&=&\(\rho^k\(\u_\star^{k+\frac{i}{M}} - \u_\star^{k+\frac{i-1}{M}}\),\u_\star^{k+\frac{i}{M}}\)-\frac{1}{2}\(\rho^k,\left|\u_\star^{k+\frac{i}{M}} - \u_\star^{k+\frac{i-1}{M}}\right|^2\)\nonumber\\
&\leq&\(\rho^k\(\u_\star^{k+\frac{i}{M}} - \u_\star^{k+\frac{i-1}{M}}\),\u_\star^{k+\frac{i}{M}}\)\nonumber\\
&=&- \delta t_{k} \(n_i^{k}\grad \mu_i^{k+\frac{i}{M}},\u_\star^{k+\frac{i}{M}}\).
\end{eqnarray}
The sum of \eqref{eqFullyDecoupledScheme01} multiplied by $M_{w,i}$ leads to the overall  mass balance equation 
\begin{eqnarray}\label{eqFullyDecoupledDiscreteMultiCompononentMassConserveProof02}
\frac{ \rho^{k+1}-\rho^{k}}{\delta t_{k}}=-\div\(\rho^{k}\u_{\star\star}^k\) -\sum_{i=1}^MM_{w,i}\div\J_i^{k+\frac{i}{M}},
\end{eqnarray}
where \eqref{eqFullyDecoupledDiscreteVelocityStar02} is also used to get the first term on the right-hand side.
We  estimate the difference between $E^{k+1}$ and $E_\star^k$ as
\begin{align}\label{eqFullyDecoupledDiscreteMultiComponentMomentumConserveProof01}
\frac{E^{k+1}-E_\star^{k+1}}{\delta t_k}&=\frac{1}{2\delta t_k}\(\rho^{k+1},|\u^{k+1}|^2\)-\frac{1}{2\delta t_k}\(\rho^k,|\u_\star^{k+1}|^2\)\nonumber\\
    &\leq \(\rho^{k}\frac{\u^{k+1}-\u_\star^{k+1}}{\delta t_k},\u^{k+1}\)+\frac{1}{2}\(\frac{\rho^{k+1}-\rho^{k}}{\delta t_k},|\u^{k+1}|^2\)\nonumber\\
&\leq -\(\rho^{k}\u_{\star\star}^k \cdot\grad\u^{k+1}+\sum_{i=1}^MM_{w,i}\J_i^{k+\frac{i}{M}}\cdot\grad\u^{k+1},\u^{k+1}\)\nonumber\\
    &~~~+\(\nabla\(\lambda^k\div\u^{k+1}\)+\div\eta^k\(\nabla\u^{k+1}+\(\nabla\u^{k+1}\)^T\),\u^{k+1}\)\nonumber\\
    &~~~-\frac{1}{2}\(\div\(\rho^{k}\u_{\star\star}^k\)+\sum_{i=1}^MM_{w,i}\div\J_i^{k+\frac{i}{M}},|\u^{k+1}|^2\)\nonumber\\
    &\leq- \left\|\sqrt{\lambda^k}\div\u^{k+1}\right\|^2 -\frac{1}{2} \left\|\sqrt{\eta^{k}}\(\nabla\u^{k+1}+\(\nabla\u^{k+1}\)^T\)\right\|^2,
\end{align}
where the third equality is obtained by using \eqref{eqFullyDecoupledSchemeVelocity02} and \eqref{eqFullyDecoupledDiscreteMultiCompononentMassConserveProof02}.
Combining  \eqref{eqFullyDecoupledDecoupledExplicitDiscreteVelocityStarProof01} and \eqref{eqFullyDecoupledDiscreteMultiComponentMomentumConserveProof01} yields 
 \begin{align}\label{eqFullyDecoupledDiscreteMultiComponentMomentumConserveProof02}
E^{k+1}-E^{k}&=E^{k+1}-E_\star^{k+1}+\sum_{i=1}^M\(E_\star^{k+\frac{i}{M}}-E_\star^{k+\frac{i-1}{M}}\)\nonumber\\
    &\leq - \delta t_k\left\|\sqrt{\lambda^k}\div\u^{k+1}\right\|^2 -\frac{1}{2}\delta t_k \left\|\sqrt{\eta^{k}}\(\nabla\u^{k+1}+\(\nabla\u^{k+1}\)^T\)\right\|^2\nonumber\\
    &~~~- \delta t_{k}\sum_{i=1}^M \(n_i^{k}\grad \mu_i^{k+\frac{i}{M}},\u_\star^{k+\frac{i}{M}}\).
\end{align}

Finally,   it is derived from \eqref{eqFDCHEConvexSplittingPoreEnergyDecayProof06} and  \eqref{eqFullyDecoupledDiscreteMultiComponentMomentumConserveProof02} that
 \begin{align}\label{eqDecoupledDiscreteMultiCompononentEnergyDecay}
\frac{E^{k+1}-E^{k}+\mathcal{F}^{k+1}-\mathcal{F}^{k}}{\delta t_{k}}&\leq-\left\|\sqrt{\mathcal{M}^k_{i}}\nabla \mu_{i}^{k+\frac{i}{M}}\right\|^2- \left\|\sqrt{\lambda^k}\div\u^{k+1}\right\|^2\nonumber\\
   & ~~~-\frac{1}{2} \left\|\sqrt{\eta^{k}}\(\nabla\u^{k+1}+\(\nabla\u^{k+1}\)^T\)\right\|^2,
   \end{align}
which yields  the energy dissipation \eqref{eqFullyDecoupledDiscreteTotalEnergyDecay}. 
\end{proof}

\section{Numerical tests}

In this section,  the proposed methods are applied to simulate  multi-component two-phase   flow problems.  We consider a binary   mixture and a ternary mixture in   a square  domain $\Omega$ with the length $20$~nm. The  boundary conditions are taken as $\u=0,~\J_i\cdot\bm{\gamma}_{\partial\Omega}=0$ and $\grad n_i\cdot\bm{\gamma}_{\partial\Omega}=0$ on the boundary $\partial\Omega$, where $\bm\gamma_{\partial\Omega}$ is the normal unit outward vector  to  $\partial \Omega$. 
For spatial discretization,     a  uniform  rectangular mesh with $40\times40$ elements is used;  the cell-centered finite difference method and the upwind scheme are employed  to discretize  the mass balance equation;  the finite volume method  on the staggered mesh \cite{Tryggvason2011book}  is applied for the momentum balance equation. We note that  the above spatial discretization methods  have  equivalent relationships with special mixed finite element methods under specified  quadrature rules \cite{arbogast1997mixed,Girault1996Mac}.  The energy parameters are chosen as $C_{T,i}=0$. 

\subsection{Binary mixture}
In this example, we consider a binary   mixture composed of methane (C$_1$) and pentane (C$_5$) at a constant temperature 310 K.  At the initial time, a square shape  droplet is located in   the center of the domain.  The initial gas molar densities of C$_1$ and C$_5$     are $7.4302$ kmol/m$^3$ and $0.6736$ kmol/m$^3$ respectively, while the initial liquid molar densities of C$_1$ and C$_5$  are $6.8663$ kmol/m$^3$ and $ 4.7915$ kmol/m$^3$  respectively. The initial molar density distributions for  C$_1$ and C$_5$ are illustrated in  Figure \ref{SquareC1andC5MolarDensityOfC5Temperature310K}(a) and (d) respectively.  We use the diffusion mobility formulation given by  \eqref{eqMultiCompononentMassConserveDiffusionChoiceB}  with the coefficients $\mathcal{D}_{12}=\mathcal{D}_{21}=10^{-8}$ m$^2$/s. The volumetric  viscosity and    the shear viscosity are taken  as $\xi=\eta=10^{-4}$ Pa$\cdot$s.  The time step size is taken as $10^{-12}$ s, and 200  time steps are simulated. 

The  velocity-density decoupled  method proposed in Sub-section \ref{secVelocityDensityDecoupledScheme} is applied to simulate the dynamical evolution of the square-shaped droplet. In Figure \ref{SquareC1andC5EnergyTemperature310}, we show the evolution profiles of the modified  total energy (i.e., the sum of    the modified  Helmholtz free energy and   kinetic energy) with time steps;  we also depict the original total energy (i.e., the sum of    the original  Helmholtz free energy and   kinetic energy) for the sake of comparison. 
It is observed from Figure \ref{SquareC1andC5EnergyTemperature310}(a) that although the modified total (free) energy is slightly less than the original energy, both  total (free) energies are   strictly dissipated with time steps, and moreover,   Figure \ref{SquareC1andC5EnergyTemperature310}(b), which is a zoom-in plot of Figure \ref{SquareC1andC5EnergyTemperature310}(a)  in the later time steps, demonstrates  that both total (free) energies remain to decrease.   As a result, the proposed method can preserve the energy-dissipation  feature.

  Figure \ref{SquareC1andC5MolarDensityOfC5Temperature310K} depicts the evolution process  of each component molar density, and it   is clearly observed that the droplet is gradually reshaping  to a circle from its initial square shape due to   chemical potential gradients.  In Figures \ref{SquareC1andC5VelocityTemperature310K}, we show  the fluid motion  driven by chemical potential gradients, including  the velocity field and  magnitudes of both velocity components.

\begin{figure}
            \centering \subfigure[]{
            \begin{minipage}[b]{0.45\textwidth}
            \centering
             \includegraphics[width=0.95\textwidth,height=2in]{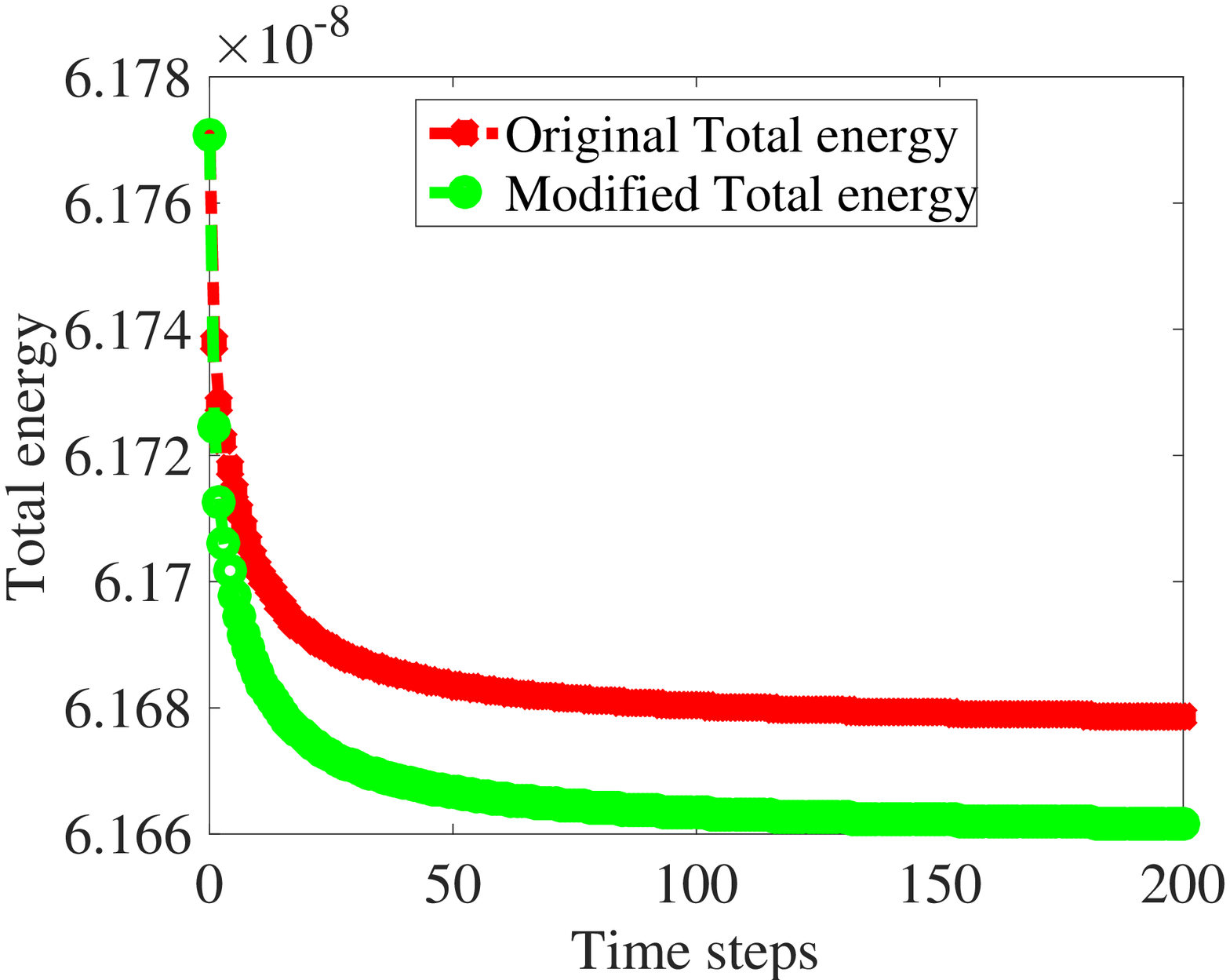}
            \end{minipage}
            }
            \centering \subfigure[]{
            \begin{minipage}[b]{0.45\textwidth}
            \centering
             \includegraphics[width=0.95\textwidth,height=2in]{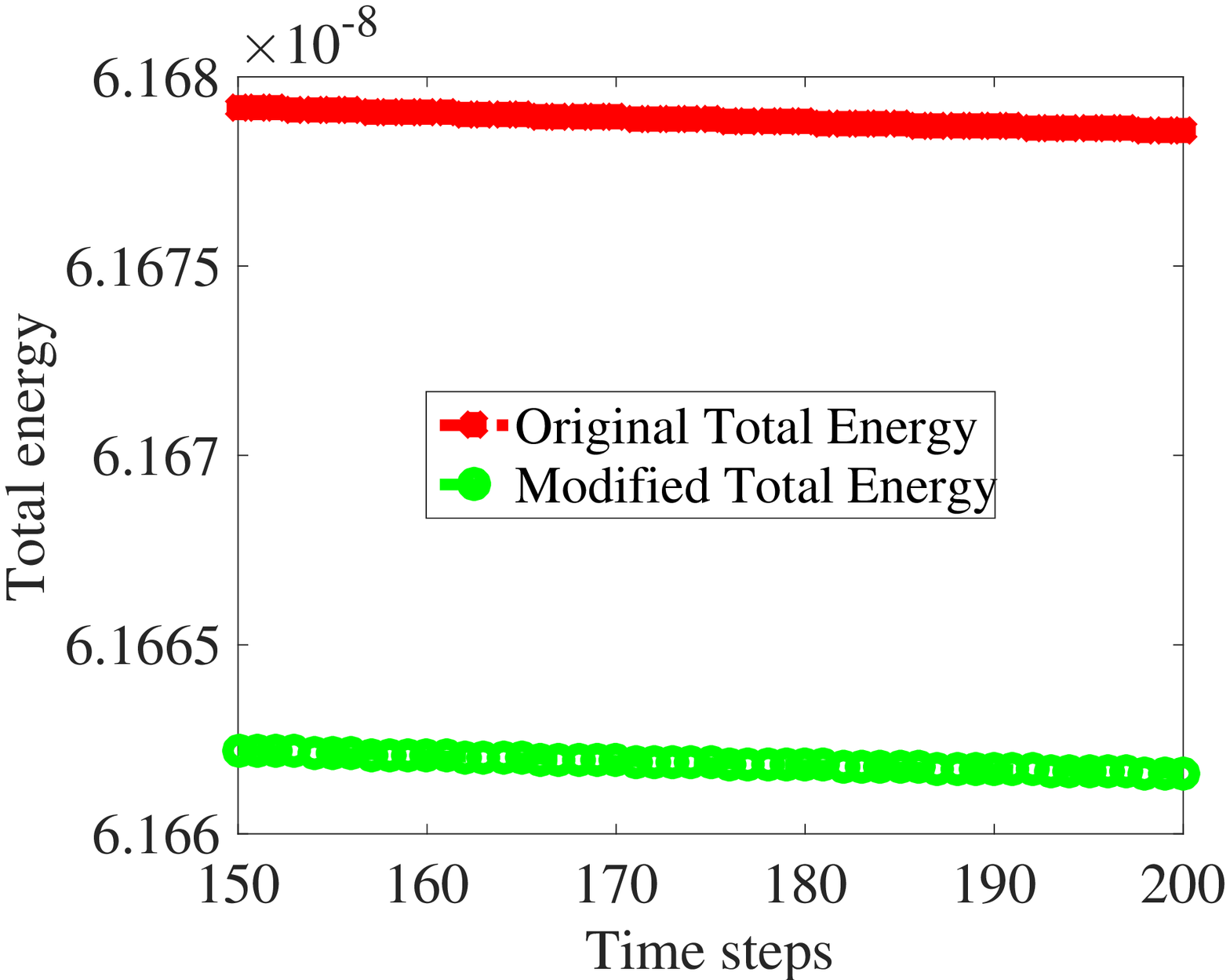}
            \end{minipage}
            }
           \caption{Binary mixture:  total energy dissipation with time steps.}
            \label{SquareC1andC5EnergyTemperature310}
 \end{figure}

\begin{figure}
            \centering \subfigure[C$_1$ at the initial time]{
            \begin{minipage}[b]{0.3\textwidth}
               \centering
             \includegraphics[width=\textwidth,height=0.9\textwidth]{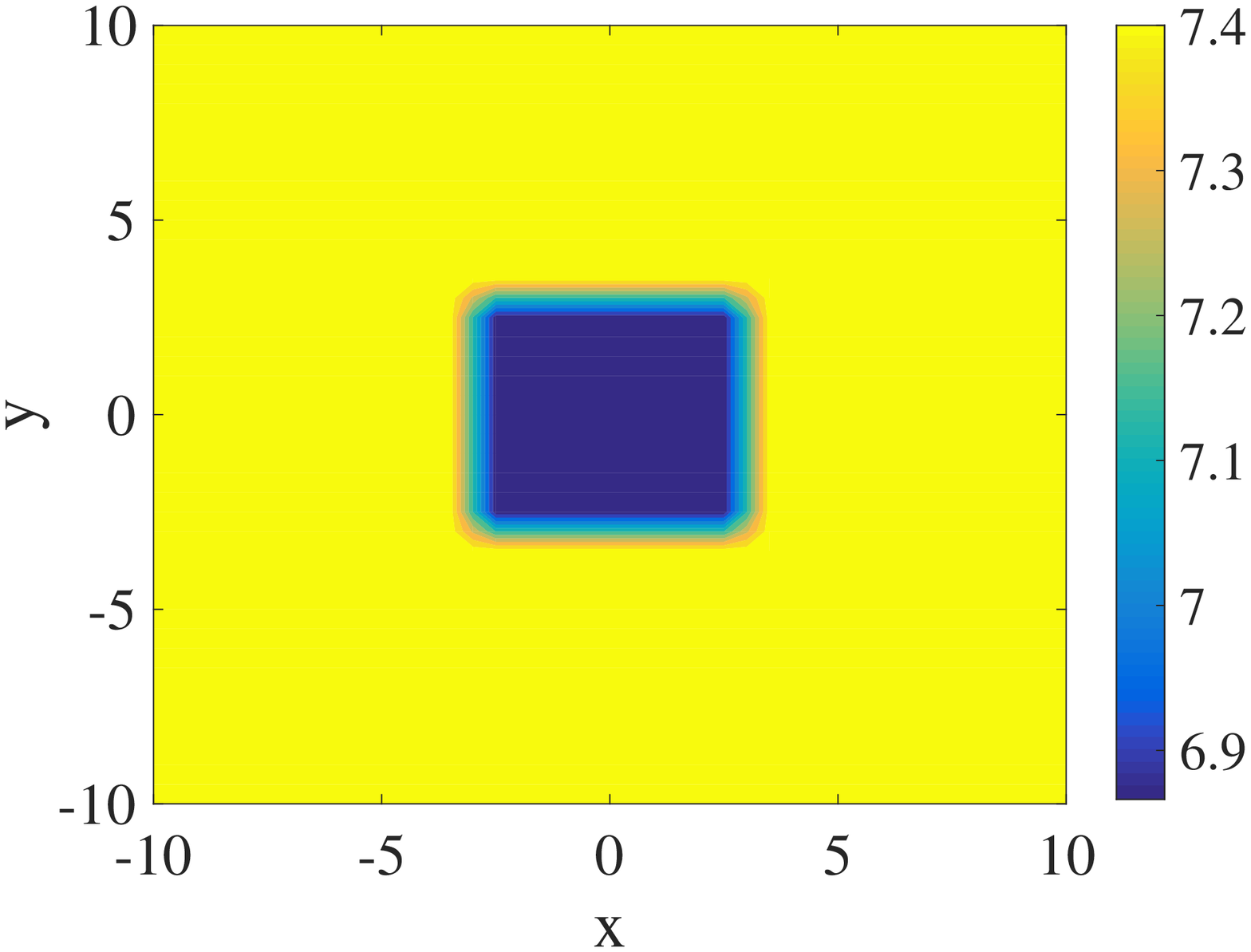}
            \end{minipage}
            }
            \centering \subfigure[C$_1$ at the 80th time step]{
            \begin{minipage}[b]{0.3\textwidth}
            \centering
             \includegraphics[width=\textwidth,height=0.9\textwidth]{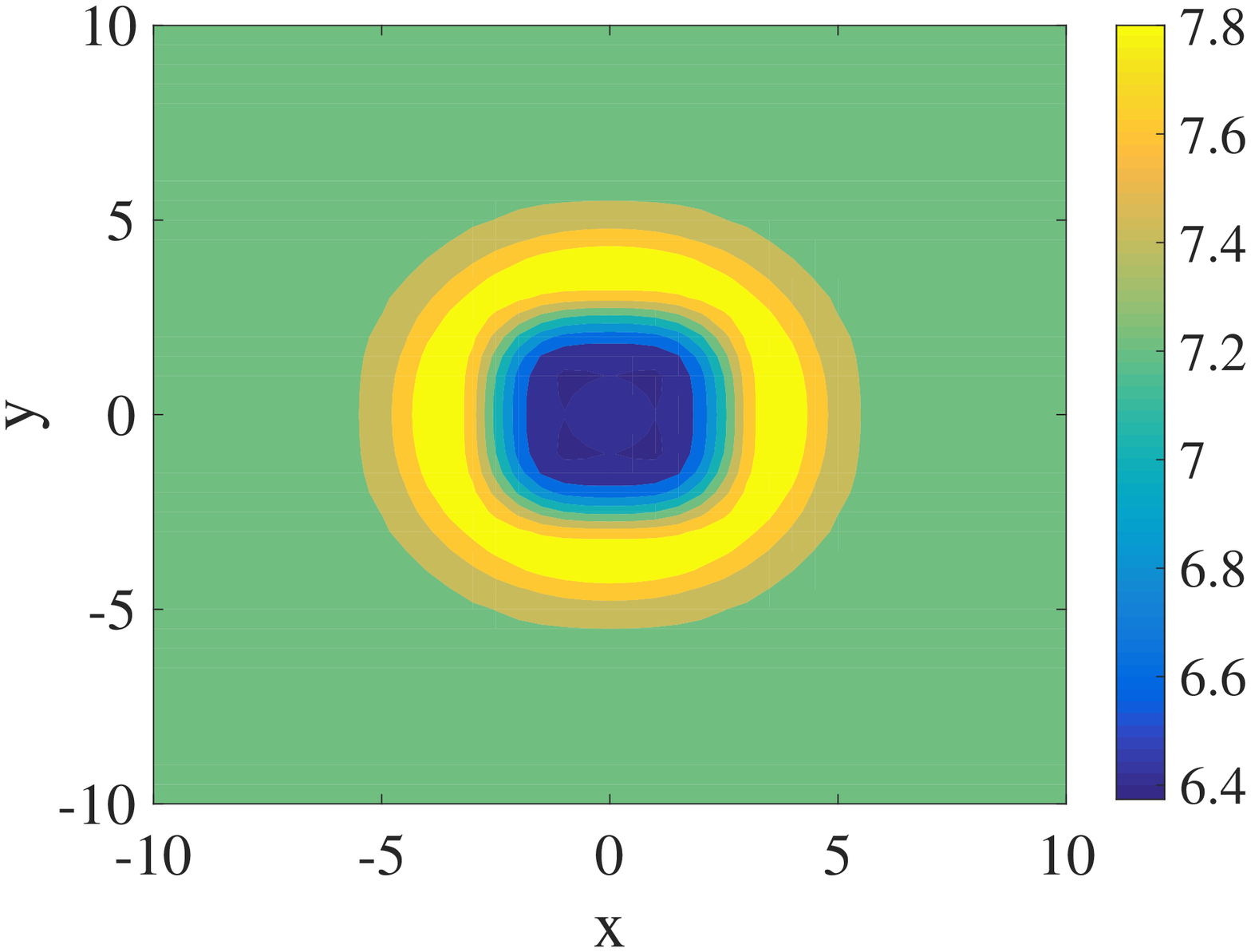}
            \end{minipage}
            }
           \centering \subfigure[C$_1$ at the 200th time step]{
            \begin{minipage}[b]{0.3\textwidth}
               \centering
             \includegraphics[width=\textwidth,height=0.9\textwidth]{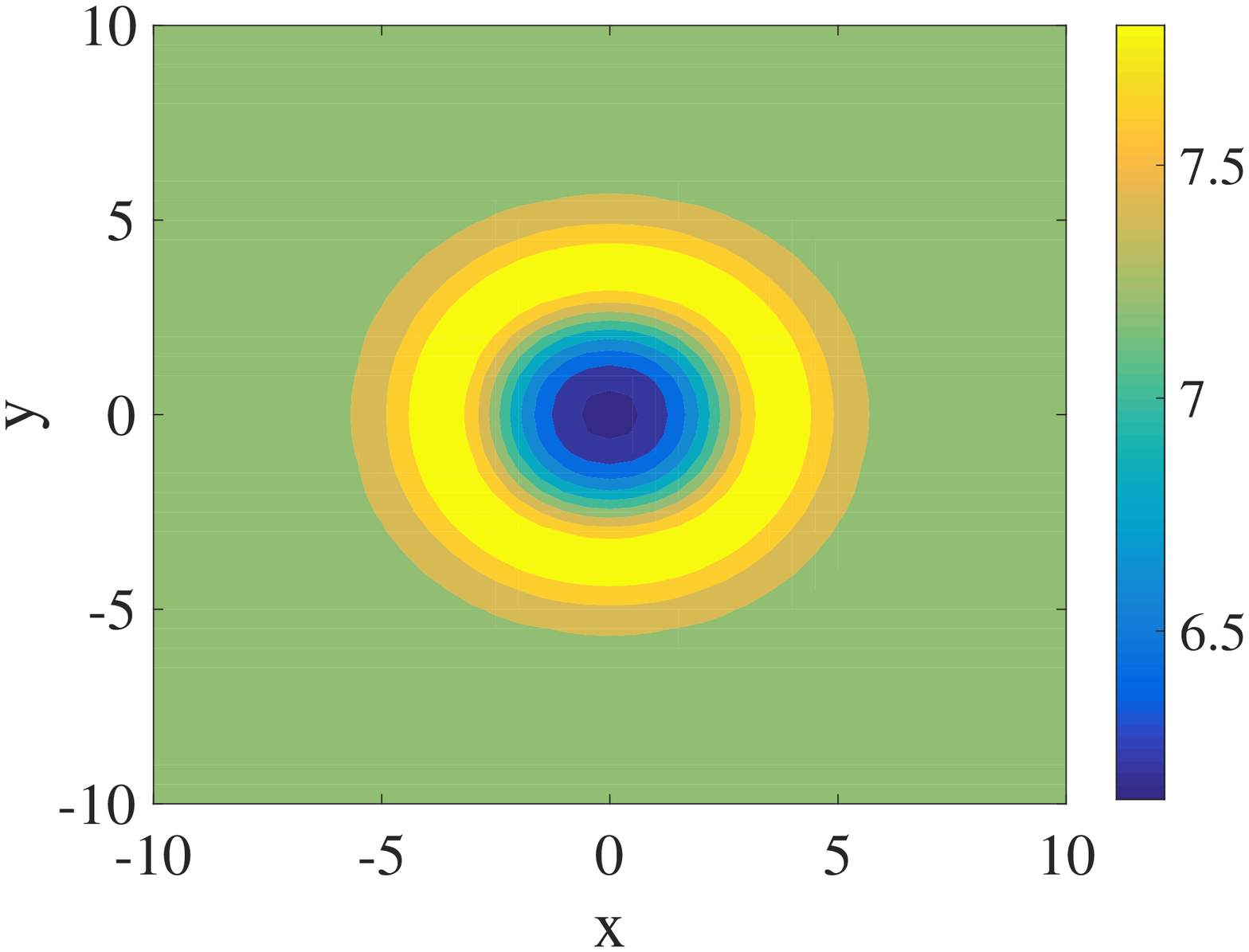}
            \end{minipage}
            }
            \centering \subfigure[C$_5$ at the initial time]{
            \begin{minipage}[b]{0.3\textwidth}
               \centering
             \includegraphics[width=\textwidth,height=0.9\textwidth]{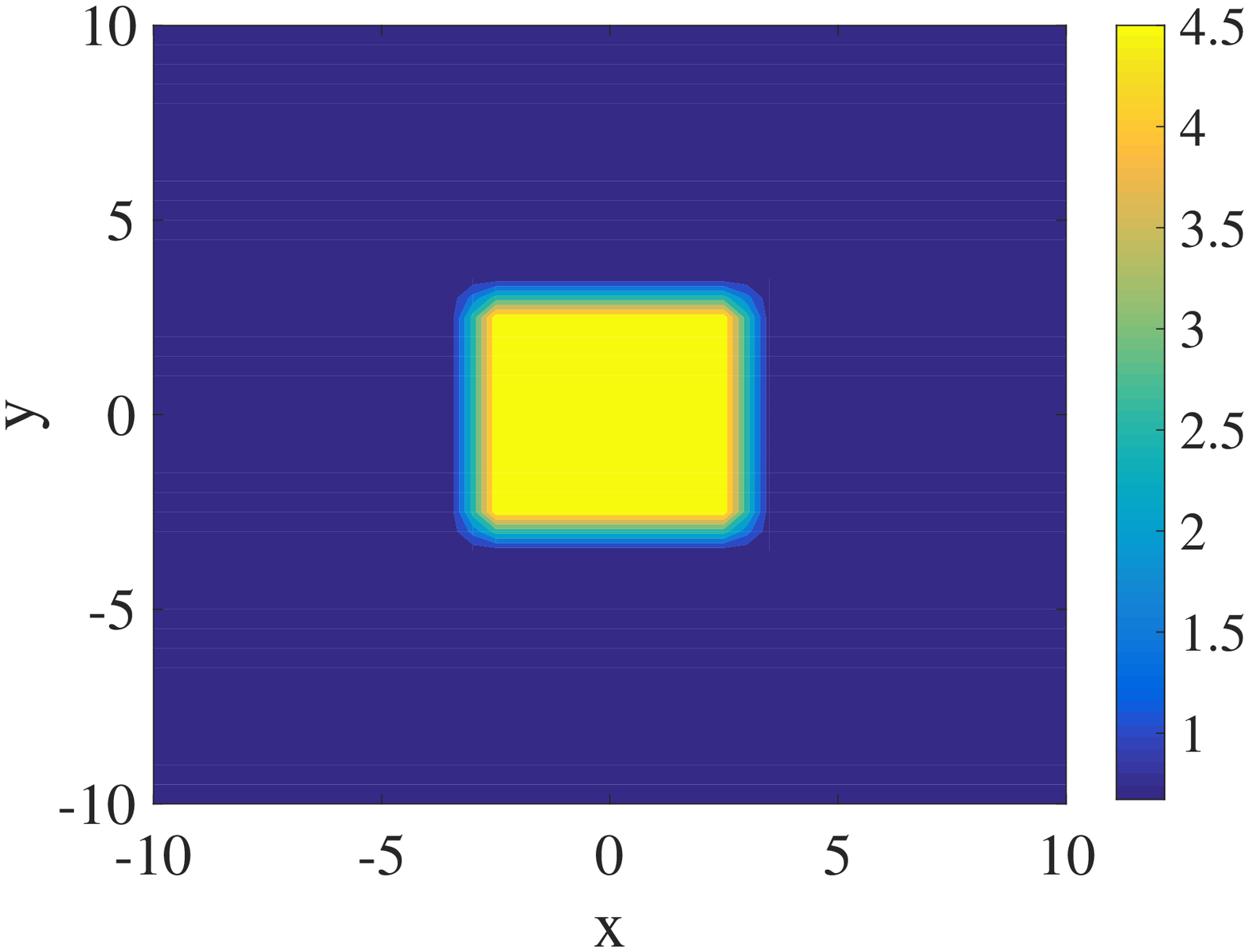}
            \end{minipage}
            }
            \centering \subfigure[C$_5$ at the 80th time step]{
            \begin{minipage}[b]{0.3\textwidth}
            \centering
             \includegraphics[width=\textwidth,height=0.9\textwidth]{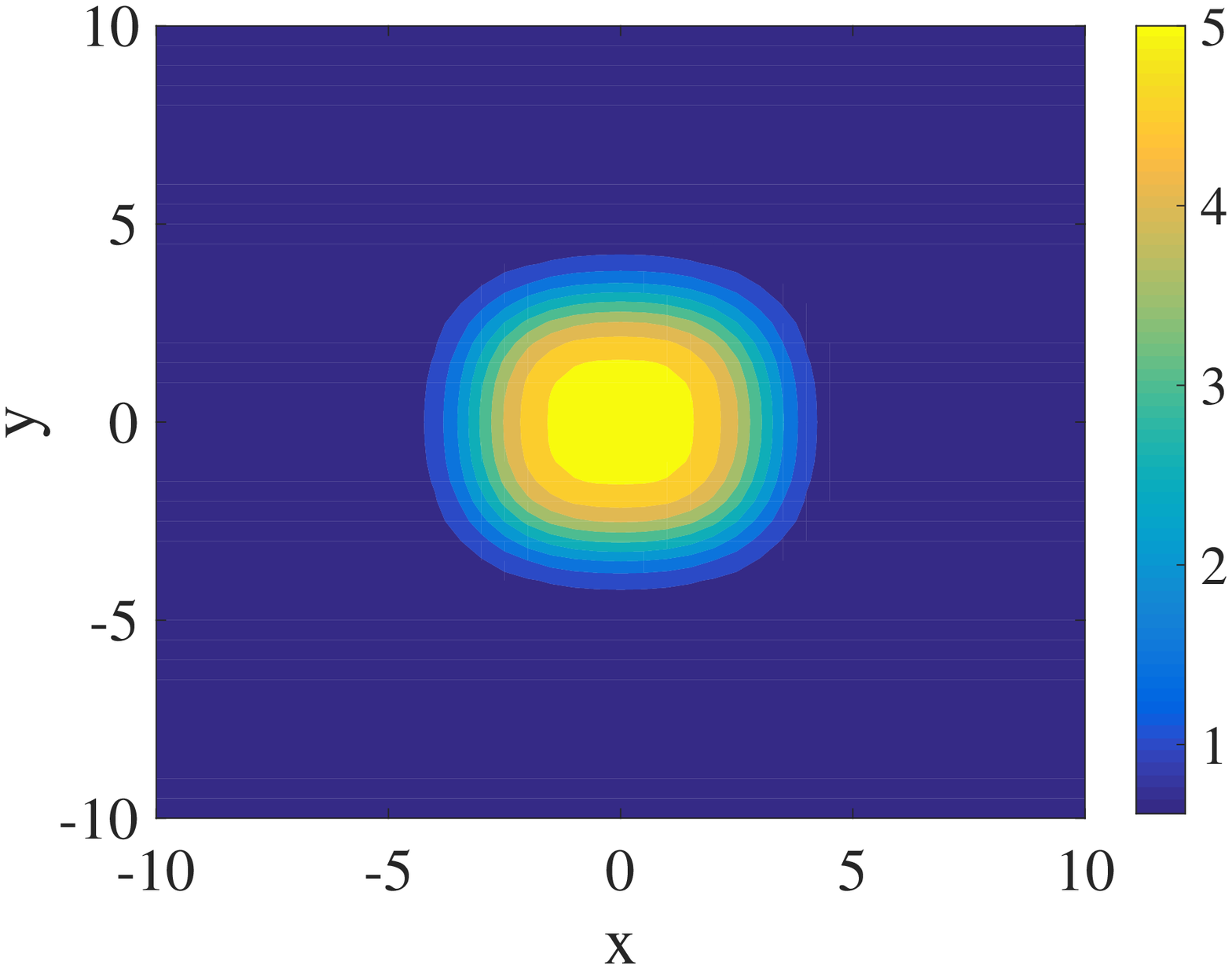}
            \end{minipage}
            }
           \centering \subfigure[C$_5$ at the 200th time step]{
            \begin{minipage}[b]{0.3\textwidth}
               \centering
             \includegraphics[width=\textwidth,height=0.9\textwidth]{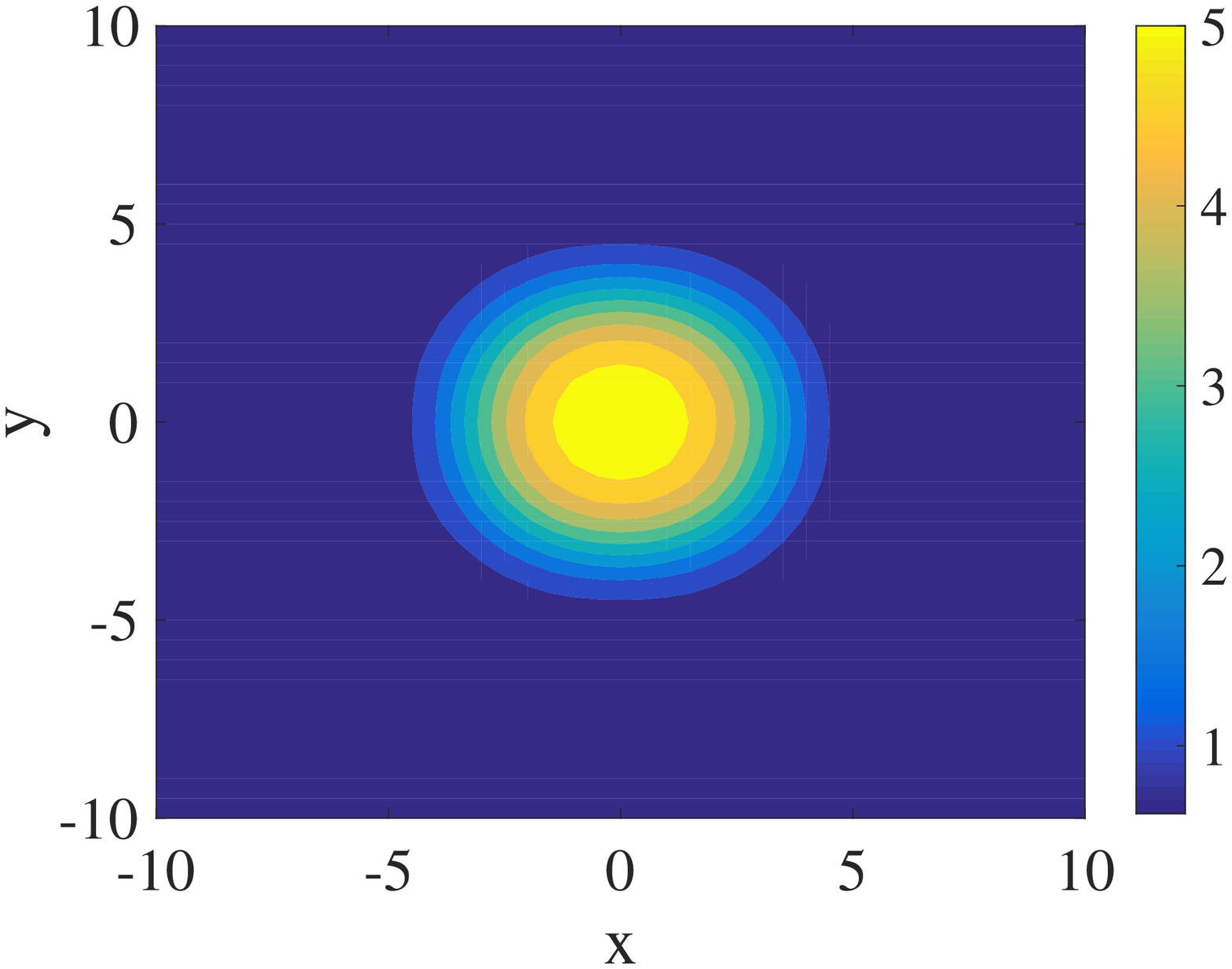}
             \end{minipage}
            }
            \caption{Binary mixture:  molar densities of C$_1$ and C$_5$  at  different time steps.}
            \label{SquareC1andC5MolarDensityOfC5Temperature310K}
 \end{figure}

\begin{figure}
           \centering \subfigure[]{
            \begin{minipage}[b]{0.3\textwidth}
            \centering
             \includegraphics[width=0.95\textwidth,height=0.9\textwidth]{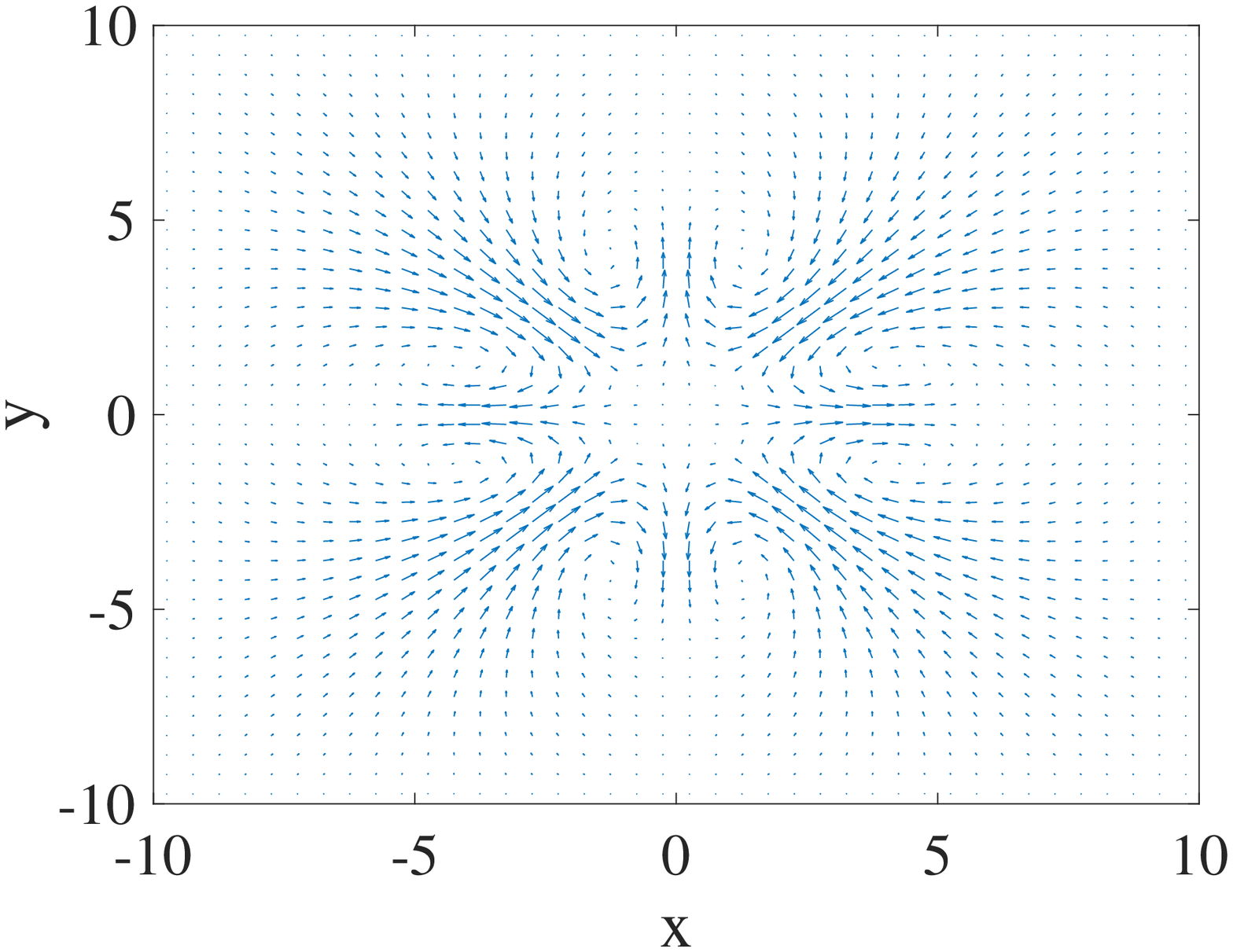}
            \end{minipage}
            }
           \centering \subfigure[]{
            \begin{minipage}[b]{0.3\textwidth}
            \centering
             \includegraphics[width=0.95\textwidth,height=0.9\textwidth]{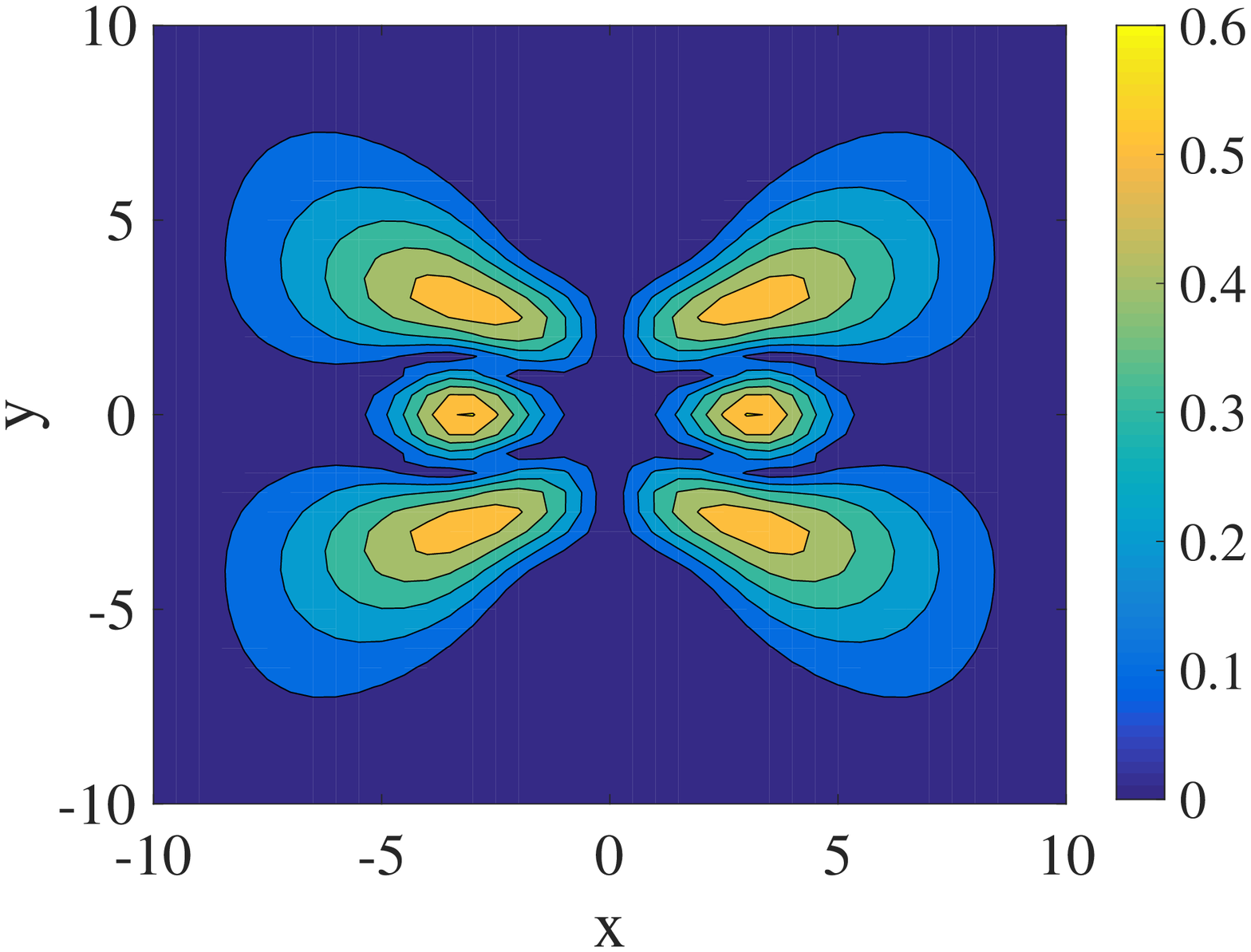}
            \end{minipage}
            }
           \centering \subfigure[]{
            \begin{minipage}[b]{0.3\textwidth}
            \centering
             \includegraphics[width=0.95\textwidth,height=0.9\textwidth]{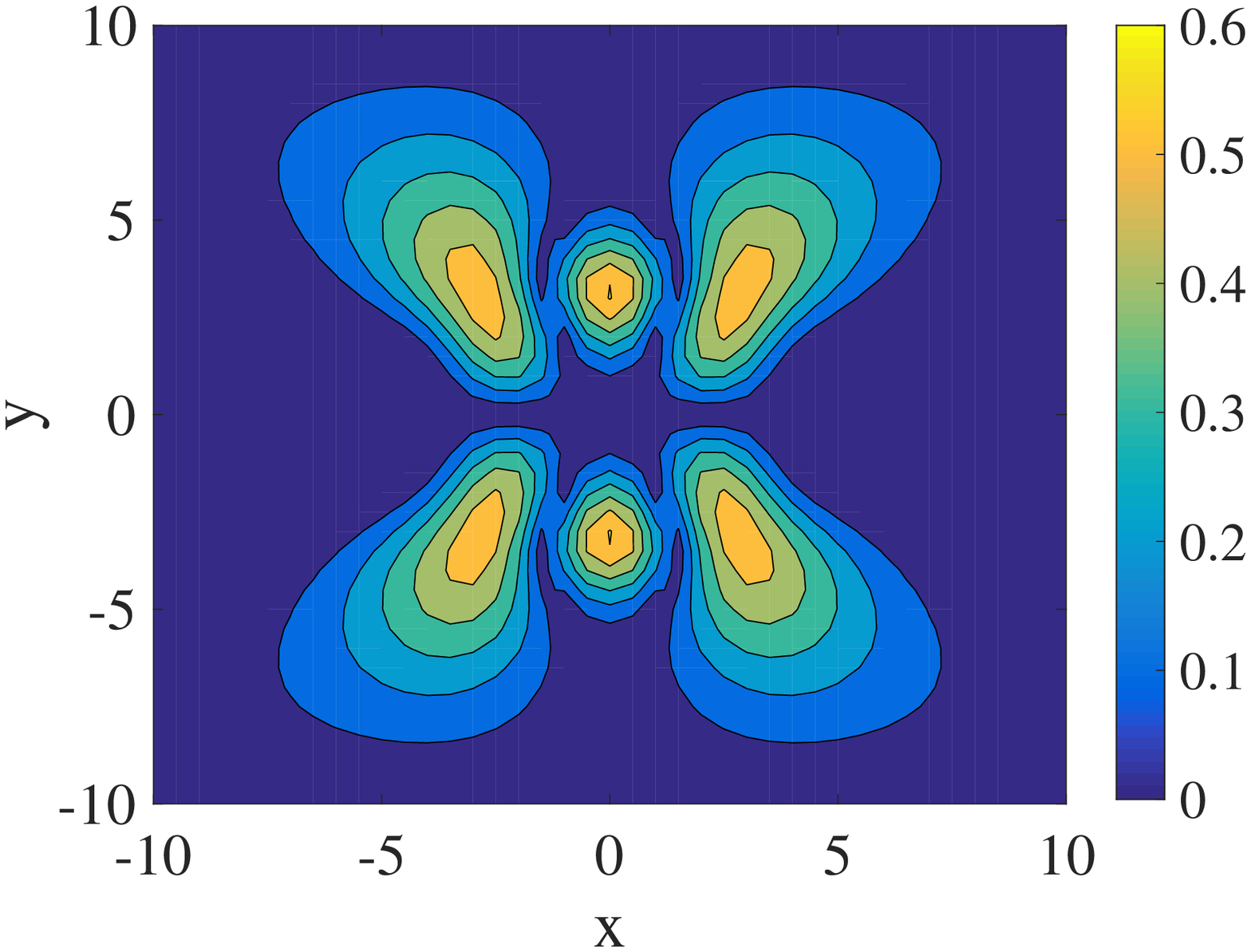}
            \end{minipage}
            }
           \caption{Binary mixture:   (a) flow quiver, (b) magnitude contour of $x$-direction velocity component, and (c) magnitude contour of $y$-direction velocity component   at the  80th  time step.}
            \label{SquareC1andC5VelocityTemperature310K}
 \end{figure}

\subsection{Ternary mixture}
In this example, we consider a ternary   mixture composed of methane (C$_1$) pentane (C$_5$) and  decane (C$_{10}$) at a constant temperature 323 K.   The initial gas molar densities of C$_1$, C$_5$  and C$_{10}$   are $10.516$ kmol/m$^3$,  $0.77$ kmol/m$^3$ and  $0.184$ kmol/m$^3$ respectively, while the initial liquid molar densities of C$_1$, C$_5$  and C$_{10}$  are $7.8412$ kmol/m$^3$,  $1.9925$ kmol/m$^3$ and $1.433$ kmol/m$^3$  respectively.  At the initial time, there are two square-shaped  droplets in   the  domain, as shown in the first figures of Figures \ref{MCTwoSquareCh4andTwoHydrocarbonsMolarDensityOfCH4Temperature323K}, \ref{MCTwoSquareCh4andTwoHydrocarbonsMolarDensityOfnC5Temperature323K} and \ref{MCTwoSquareCh4andTwoHydrocarbonsMolarDensityOfnC10Temperature323K} respectively.  The diffusion fluxes are  formulated by  \eqref{eqMultiCompononentMassConserveDiffusionChoiceA}  with  the diffusion coefficients $D_i=3\times10^{-8}$ m$^2$/s $(1\leq i\leq3)$. The volumetric  viscosity and    the shear viscosity are set  as $\xi=\eta=10^{-4}$ Pa$\cdot$s.  We take the time step size equal to $10^{-12}$ s, and we simulate the evolution process for 1000  time steps. 

We employ the  component-wise,  decoupled numerical  scheme proposed in Sub-section \ref{secComponentwisescheme}. The original and modified  total energies and their zoom-in plots are shown in Figure \ref{MCTwoSquareCh4andTwoHydrocarbonsTotalEnergyTemperature323}. We still see that both of total energies are dissipated  with time steps. We also note that in practical computations, the component-wise method is really effective for the mixtures composed of multiple components since it only needs to solve one more mass-balance equation as a new component is added.

Figures \ref{MCTwoSquareCh4andTwoHydrocarbonsMolarDensityOfCH4Temperature323K}, \ref{MCTwoSquareCh4andTwoHydrocarbonsMolarDensityOfnC5Temperature323K} and \ref{MCTwoSquareCh4andTwoHydrocarbonsMolarDensityOfnC10Temperature323K} illustrate  the  molar density configurations of  three components.  In Figure \ref{MCTwoSquareCh4andTwoHydrocarbonsVelocityTemperature323K}, we depict the fluid motion, including   the velocity fields and  magnitudes of  velocity components,  at different time steps. The simulation results show that due to   chemical potential gradients,   two droplets      are first  emerging  with each other, and at the later time,  the merged  droplets are gradually  reshaping  into a circle.

\begin{figure}
            \centering \subfigure[]{
            \begin{minipage}[b]{0.45\textwidth}
               \centering
             \includegraphics[width=0.95\textwidth,height=2in]{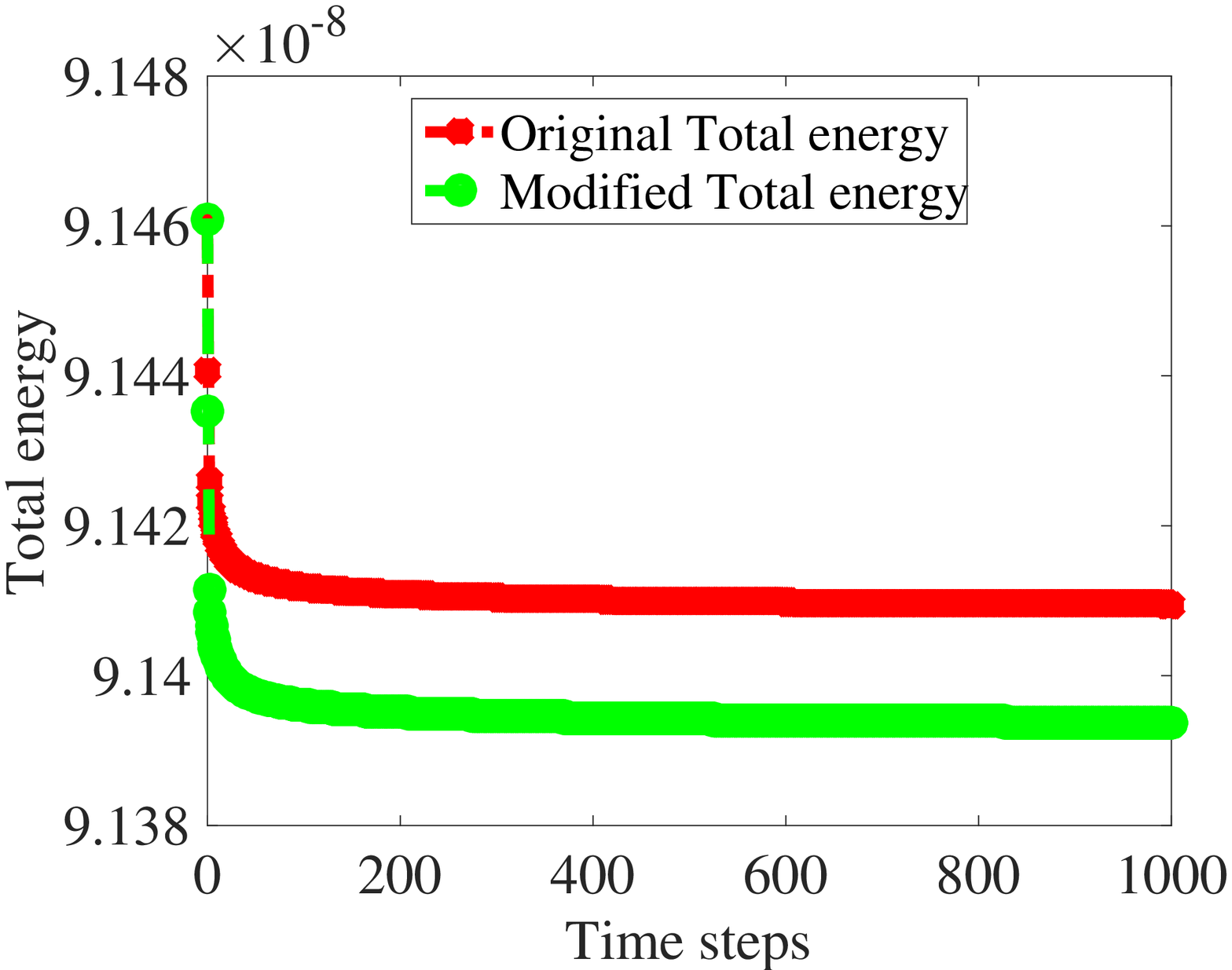}
            \end{minipage}
            }
            \centering \subfigure[]{
            \begin{minipage}[b]{0.45\textwidth}
            \centering
             \includegraphics[width=0.95\textwidth,height=2in]{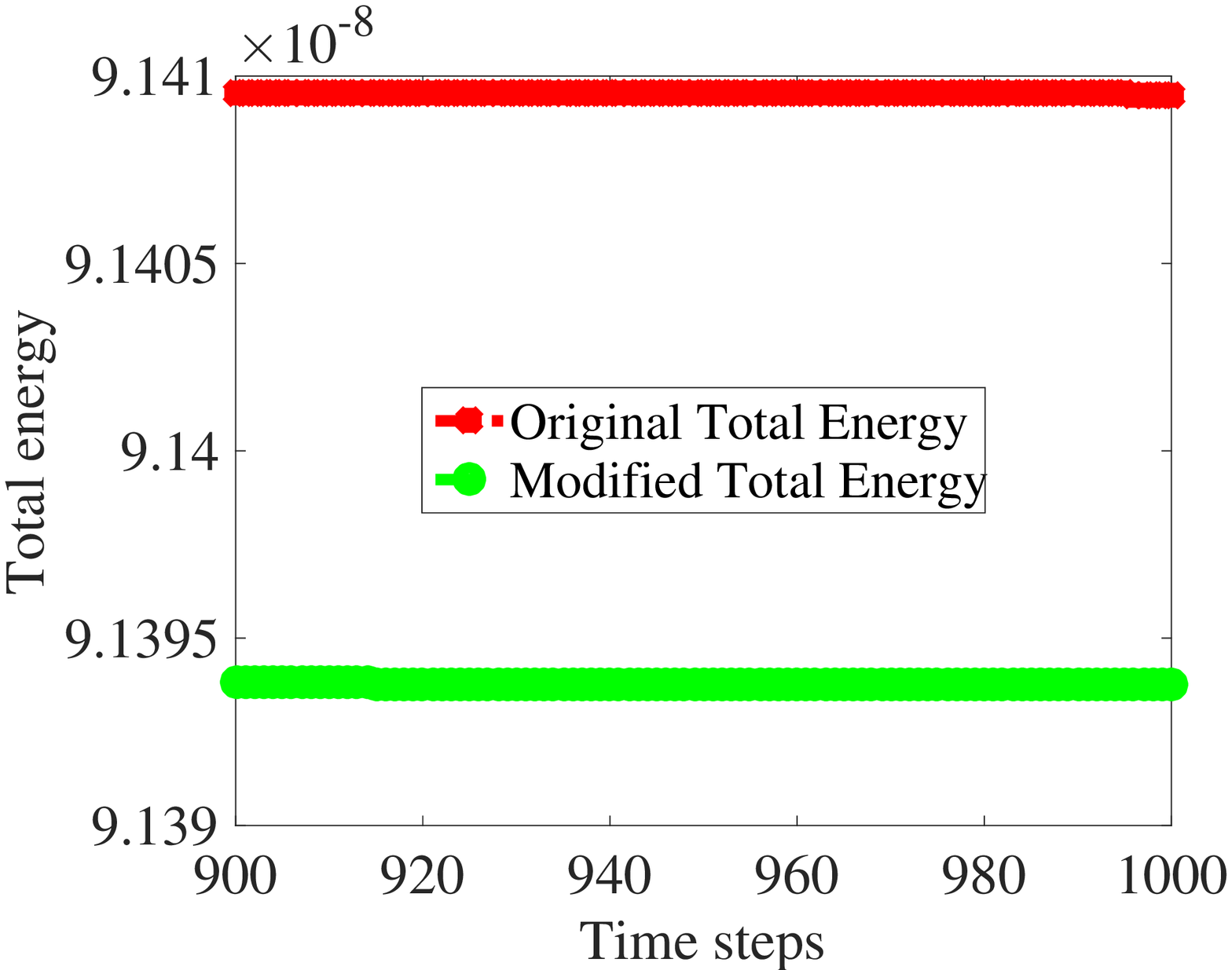}
            \end{minipage}
            }
           \caption{Ternary mixture: energy dissipation with time steps.}
            \label{MCTwoSquareCh4andTwoHydrocarbonsTotalEnergyTemperature323}
 \end{figure}

\begin{figure}
            \centering \subfigure[]{
            \begin{minipage}[b]{0.3\textwidth}
               \centering
             \includegraphics[width=\textwidth,height=0.9\textwidth]{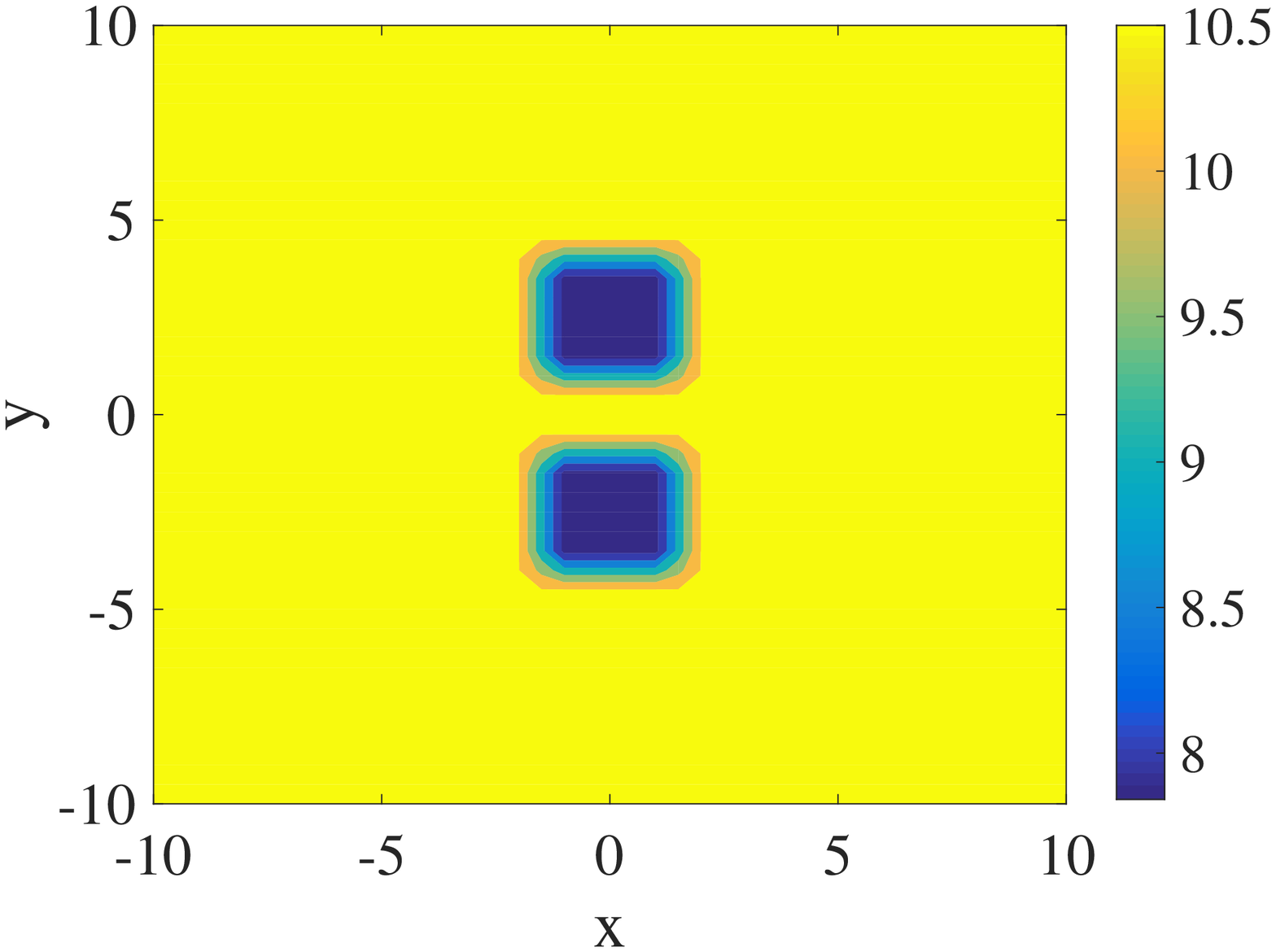}
            \end{minipage}
            }
            \centering \subfigure[]{
            \begin{minipage}[b]{0.3\textwidth}
            \centering
             \includegraphics[width=\textwidth,height=0.9\textwidth]{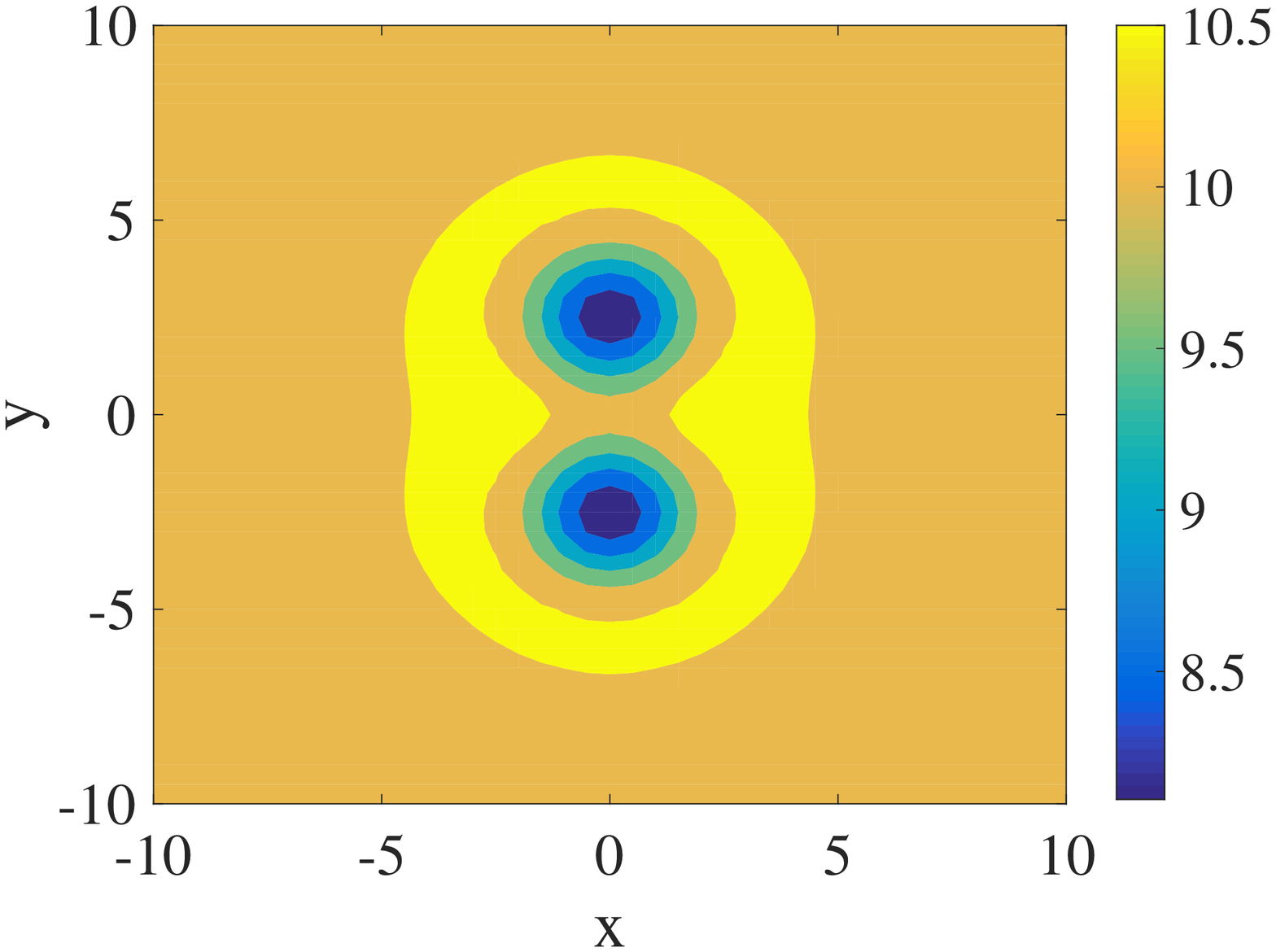}
            \end{minipage}
            }
           \centering \subfigure[]{
            \begin{minipage}[b]{0.3\textwidth}
            \centering
             \includegraphics[width=\textwidth,height=0.9\textwidth]{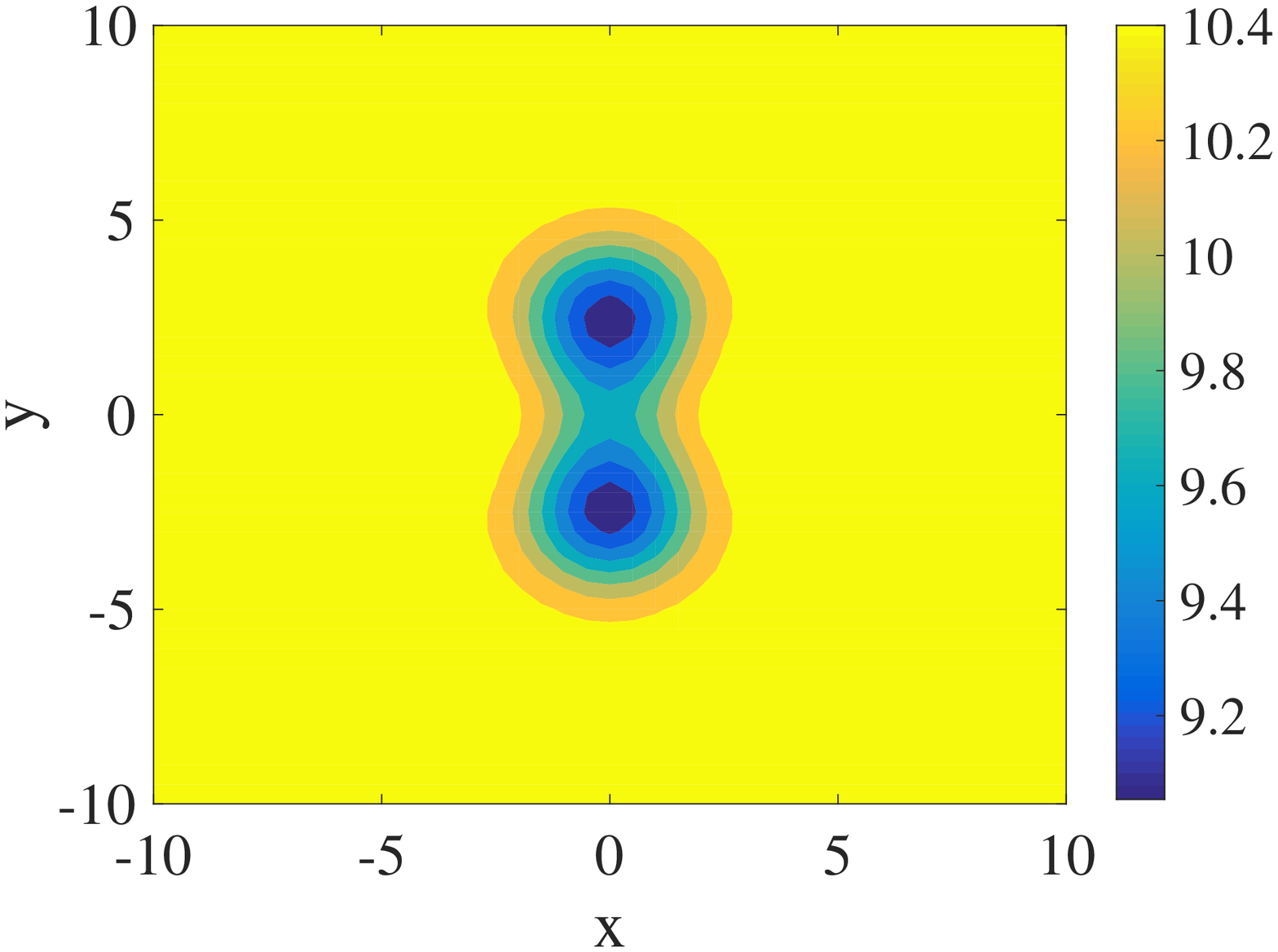}
            \end{minipage}
            }
           \centering \subfigure[]{
            \begin{minipage}[b]{0.3\textwidth}
               \centering
             \includegraphics[width=\textwidth,height=0.9\textwidth]{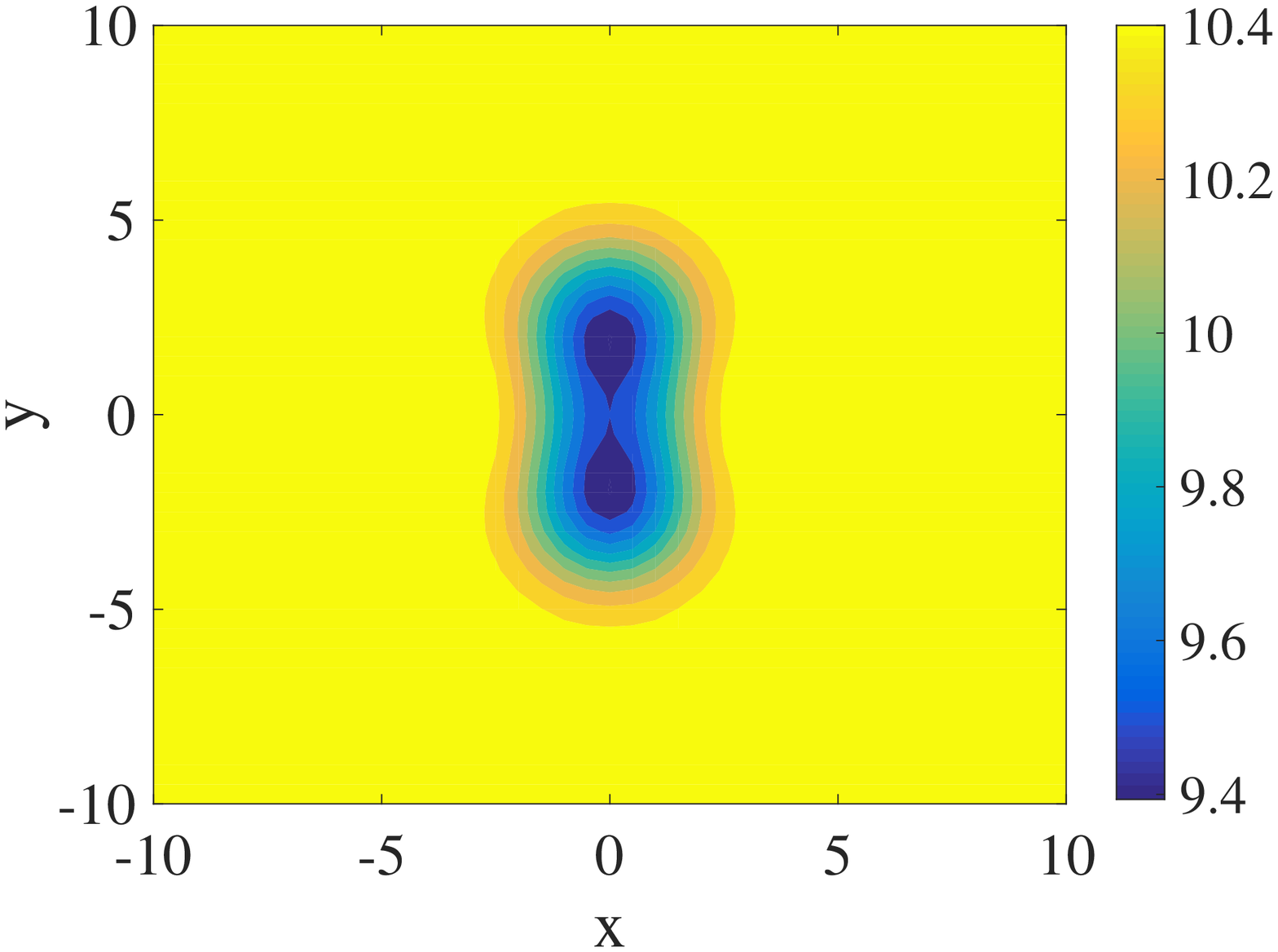}
            \end{minipage}
            }
            \centering \subfigure[]{
            \begin{minipage}[b]{0.3\textwidth}
            \centering
             \includegraphics[width=\textwidth,height=0.9\textwidth]{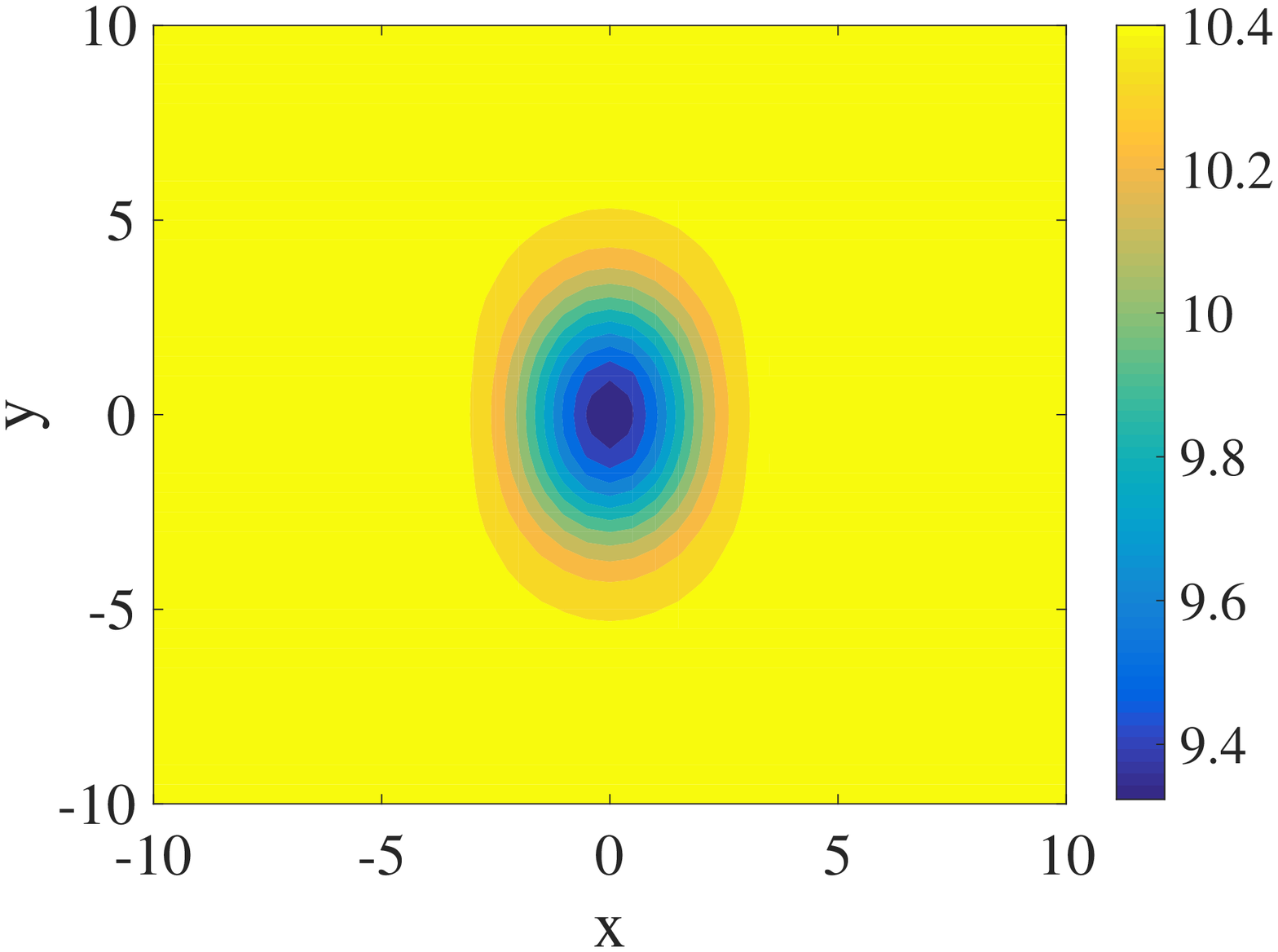}
            \end{minipage}
            }
           \centering \subfigure[]{
            \begin{minipage}[b]{0.3\textwidth}
            \centering
             \includegraphics[width=\textwidth,height=0.9\textwidth]{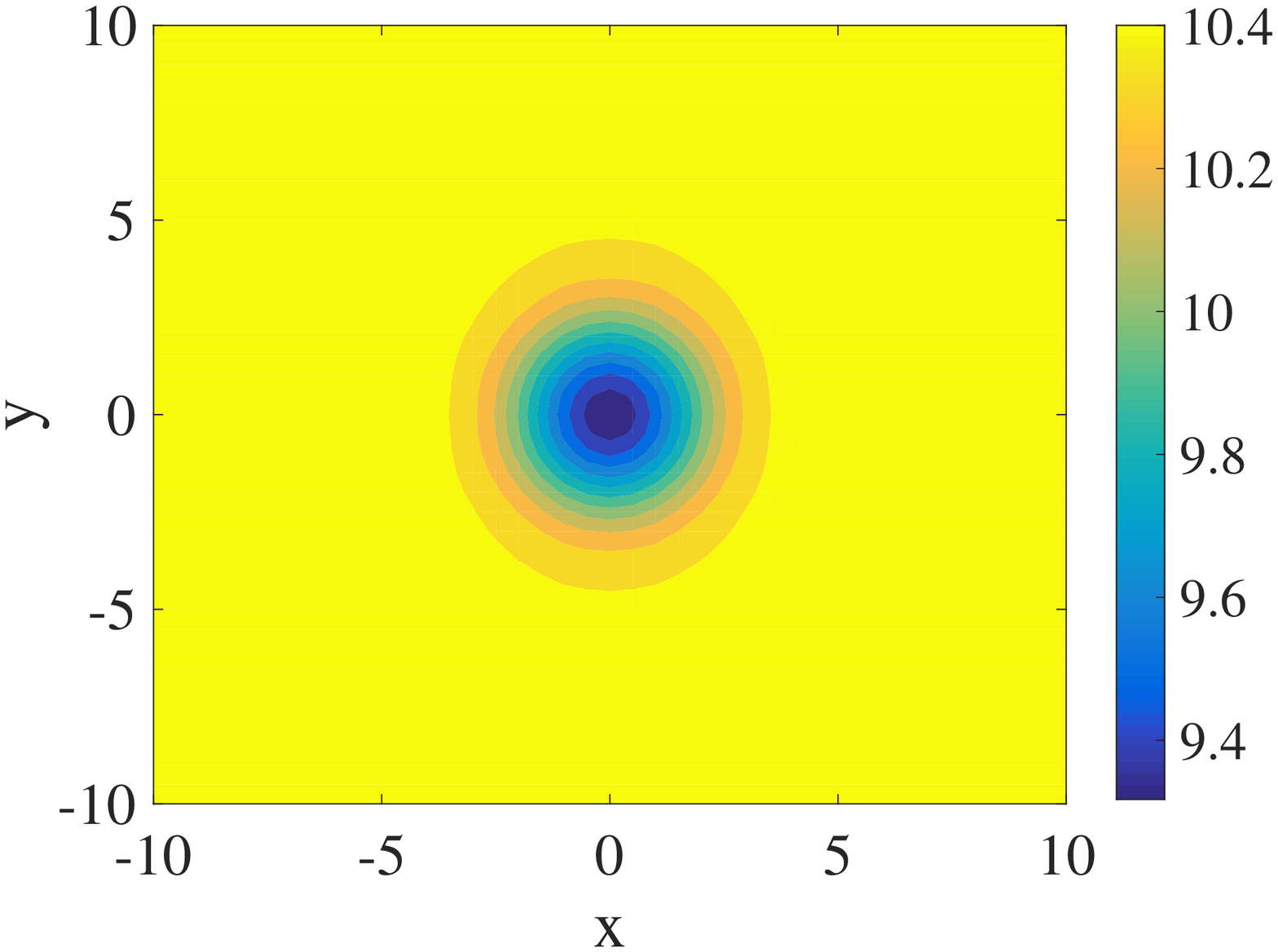}
            \end{minipage}
            }
           \caption{Ternary mixture: C$_1$ molar densities at  the the initial(a), 30th(b), 100th(c), 200th(d), 500th(e) and 1000th(f)   time step  respectively.}
            \label{MCTwoSquareCh4andTwoHydrocarbonsMolarDensityOfCH4Temperature323K}
 \end{figure}

\begin{figure}
            \centering \subfigure[]{
            \begin{minipage}[b]{0.3\textwidth}
               \centering
             \includegraphics[width=\textwidth,height=0.9\textwidth]{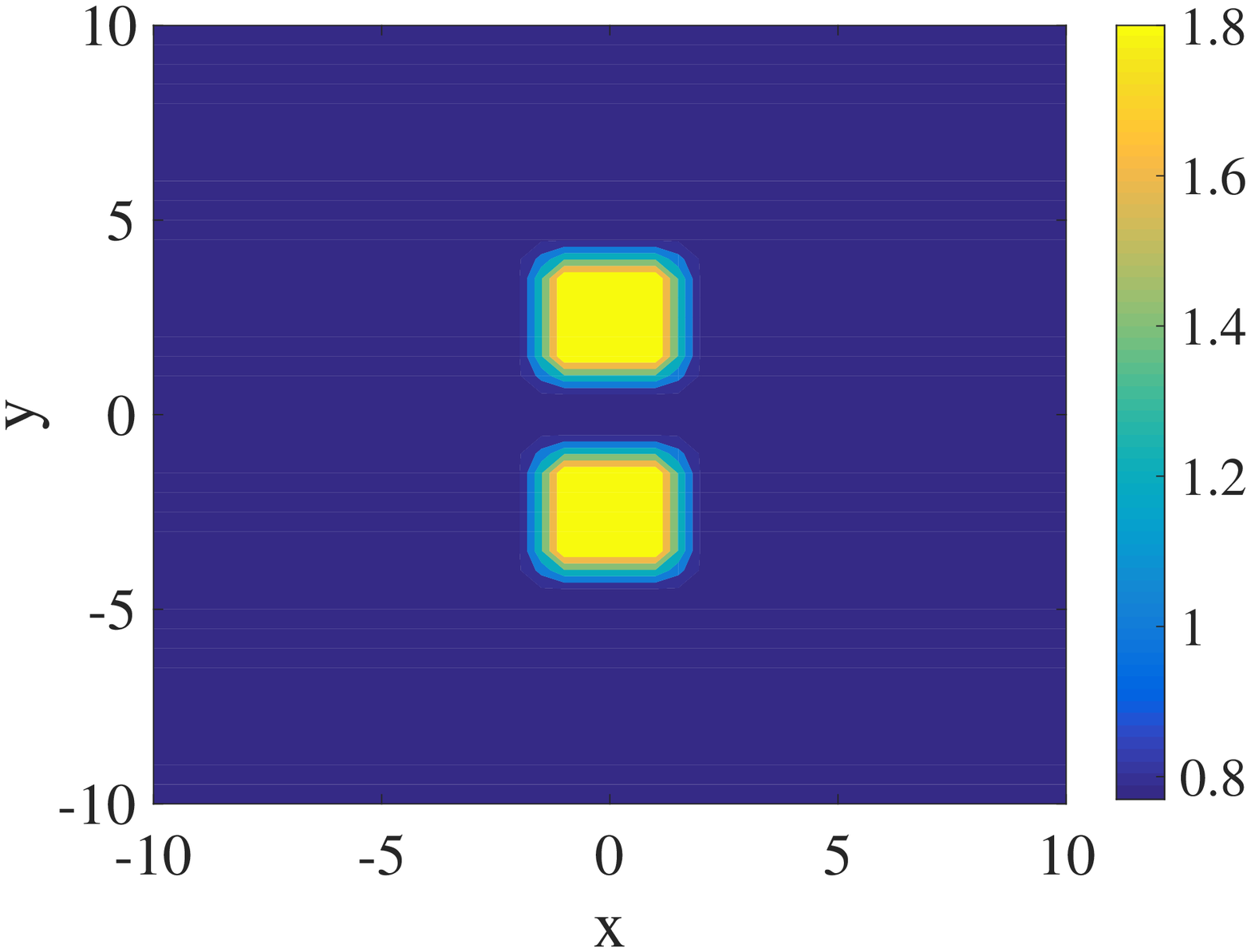}
            \end{minipage}
            }
            \centering \subfigure[]{
            \begin{minipage}[b]{0.3\textwidth}
            \centering
             \includegraphics[width=\textwidth,height=0.9\textwidth]{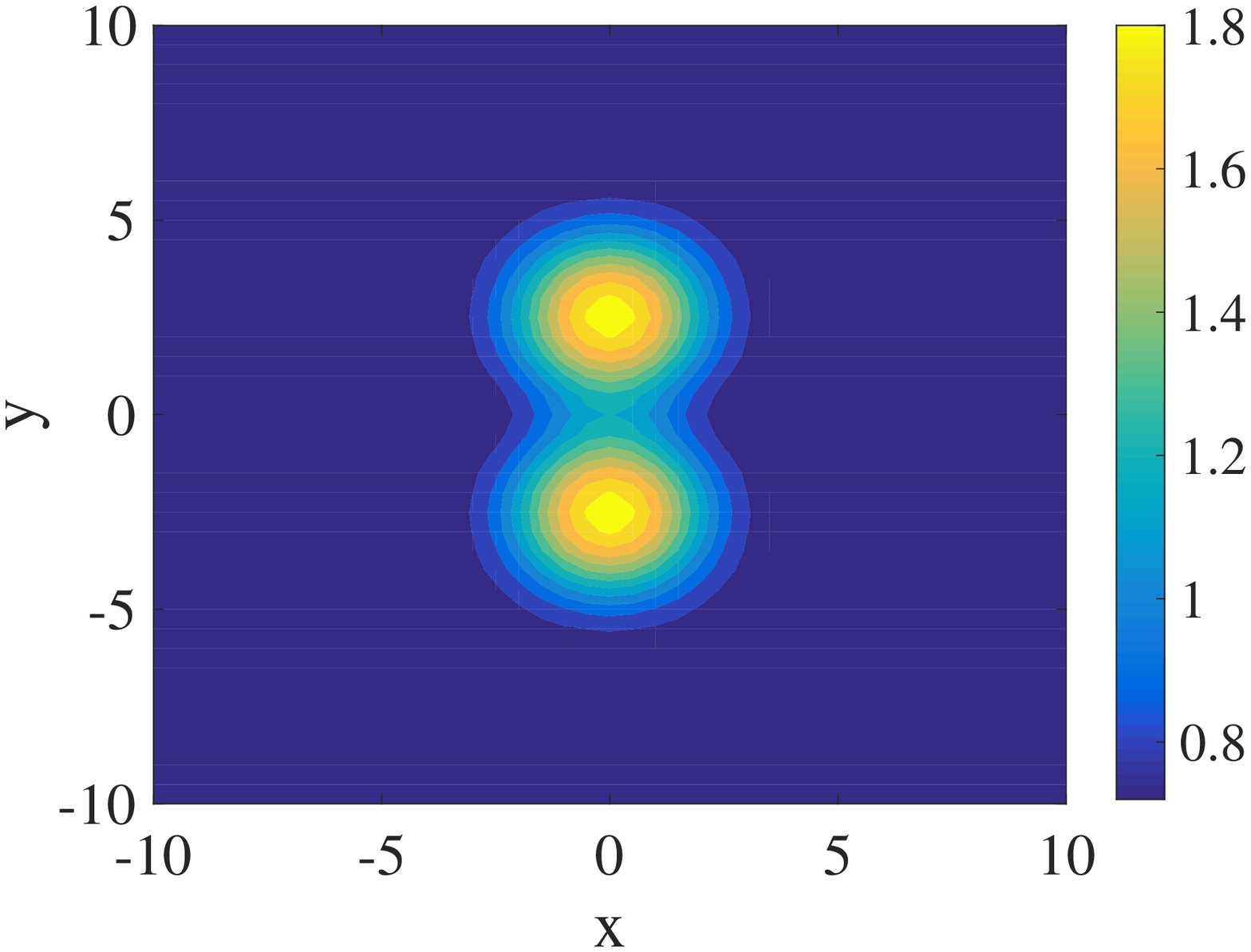}
            \end{minipage}
            }
           \centering \subfigure[]{
            \begin{minipage}[b]{0.3\textwidth}
            \centering
             \includegraphics[width=\textwidth,height=0.9\textwidth]{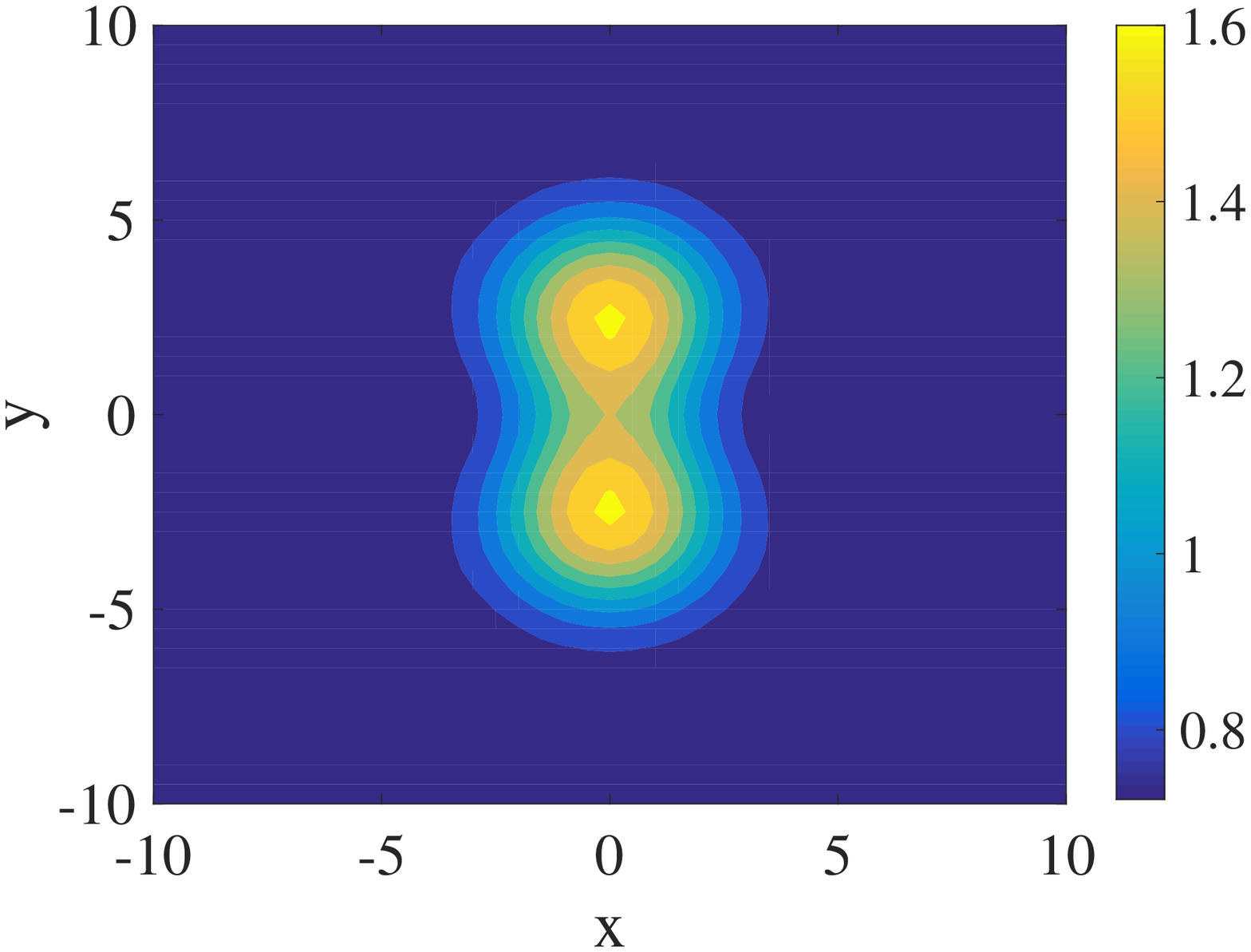}
            \end{minipage}
            }
           \centering \subfigure[]{
            \begin{minipage}[b]{0.3\textwidth}
               \centering
             \includegraphics[width=\textwidth,height=0.9\textwidth]{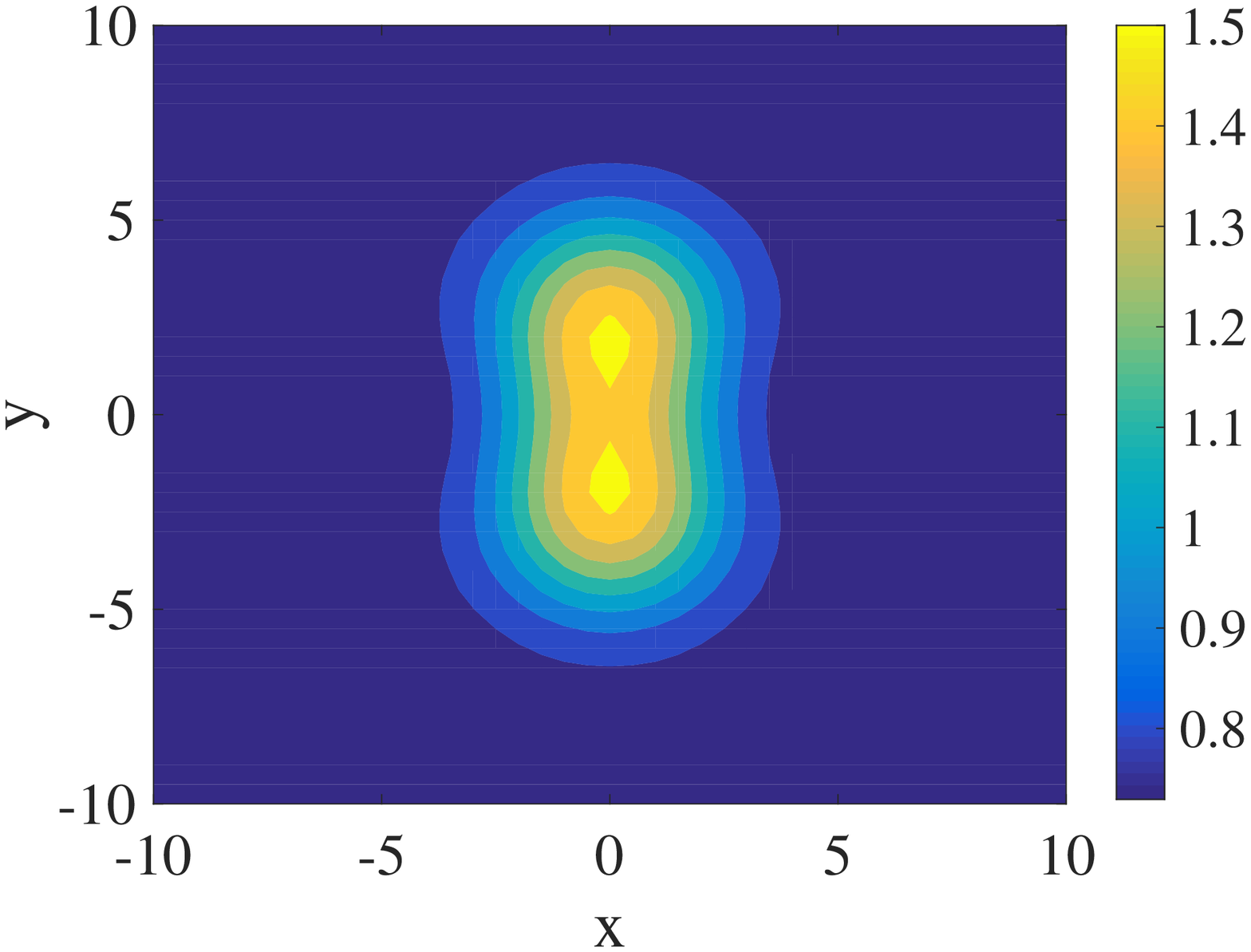}
            \end{minipage}
            }
            \centering \subfigure[]{
            \begin{minipage}[b]{0.3\textwidth}
            \centering
             \includegraphics[width=\textwidth,height=0.9\textwidth]{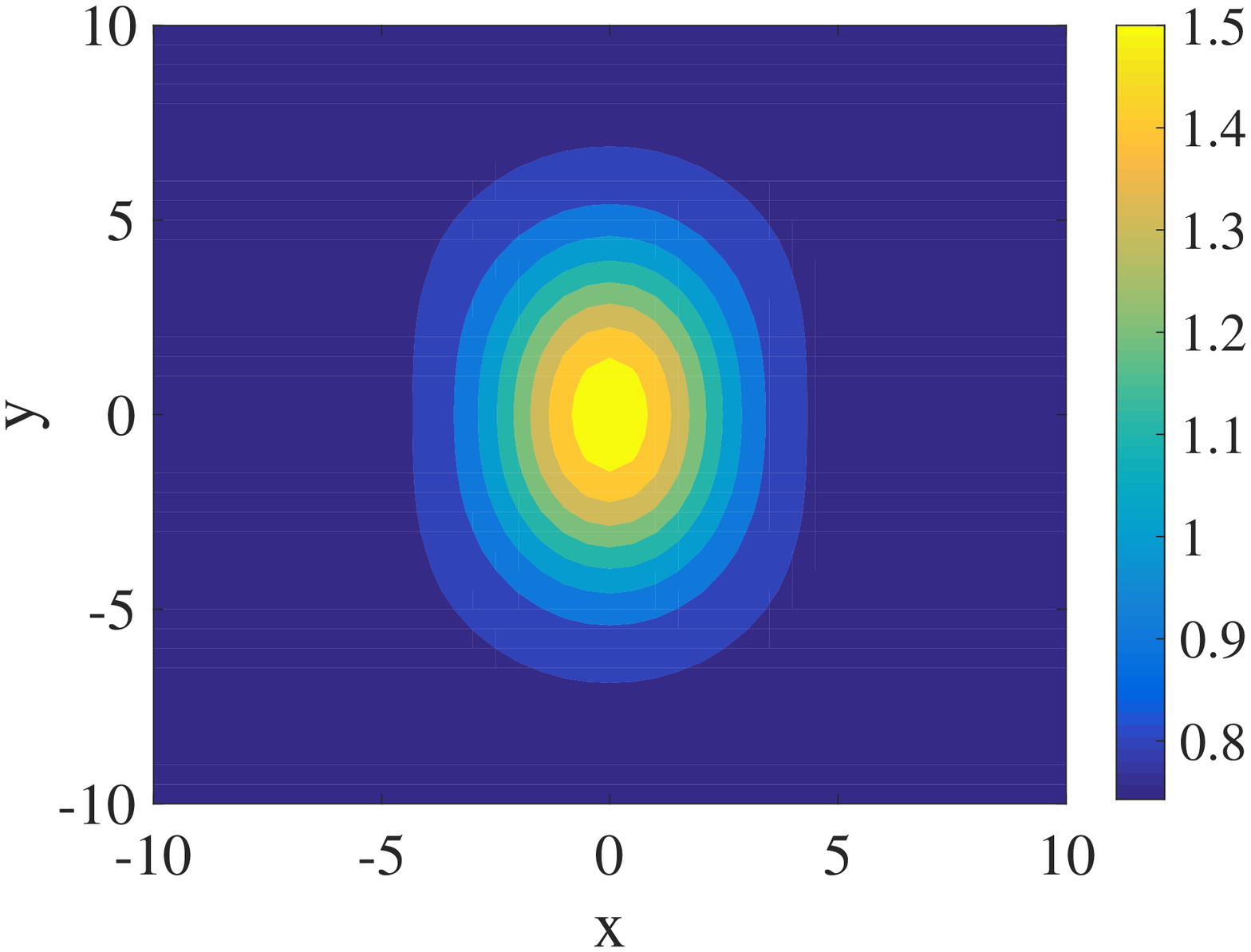}
            \end{minipage}
            }
           \centering \subfigure[]{
            \begin{minipage}[b]{0.3\textwidth}
            \centering
             \includegraphics[width=\textwidth,height=0.9\textwidth]{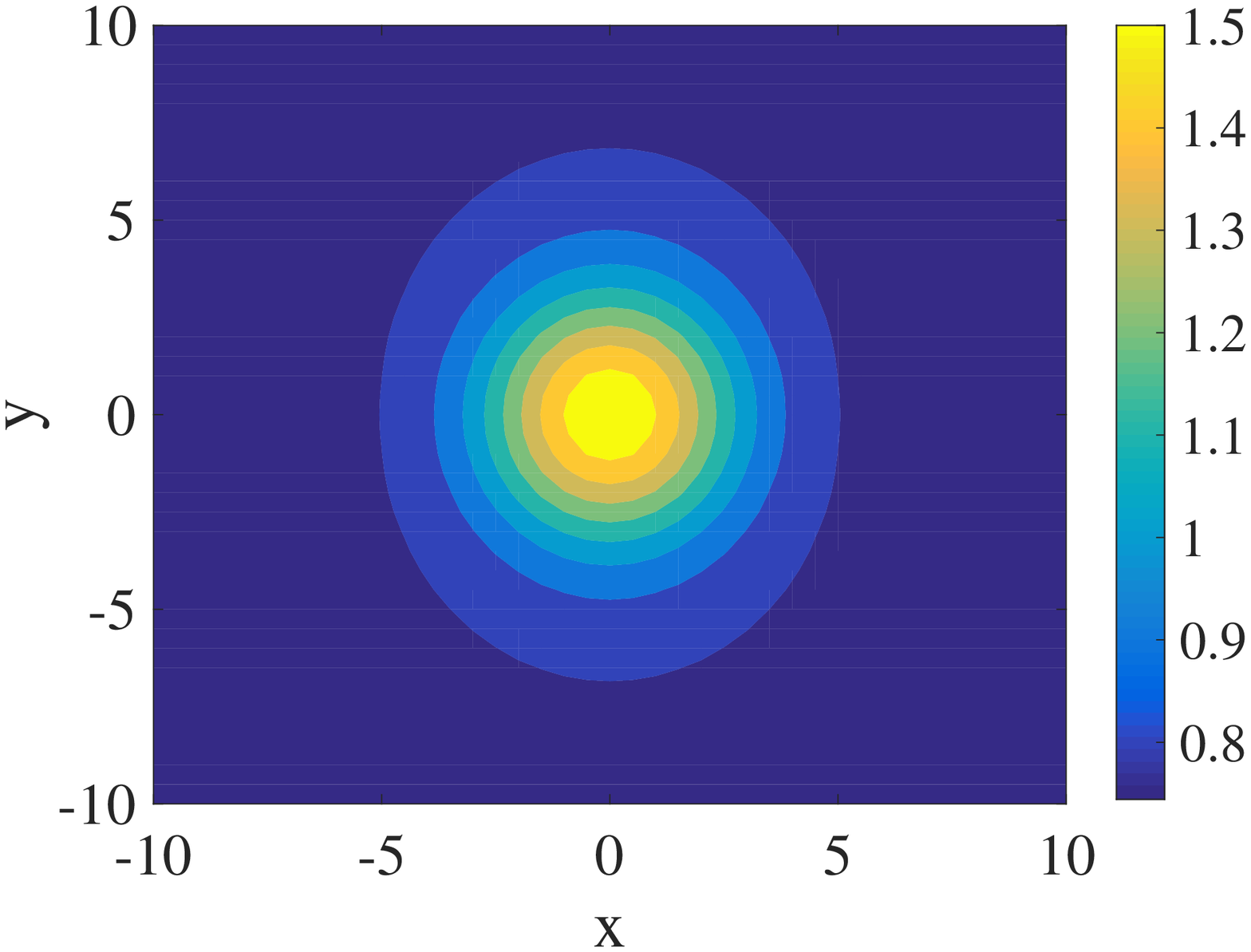}
            \end{minipage}
            }
           \caption{Ternary mixture: C$_{5}$ molar densities at  the initial(a),  30th(b), 100th(c), 200th(d), 500th(e) and 1000th(f)   time step  respectively.}
            \label{MCTwoSquareCh4andTwoHydrocarbonsMolarDensityOfnC5Temperature323K}
 \end{figure}

\begin{figure}
            \centering \subfigure[]{
            \begin{minipage}[b]{0.3\textwidth}
               \centering
             \includegraphics[width=0.95\textwidth,height=0.9\textwidth]{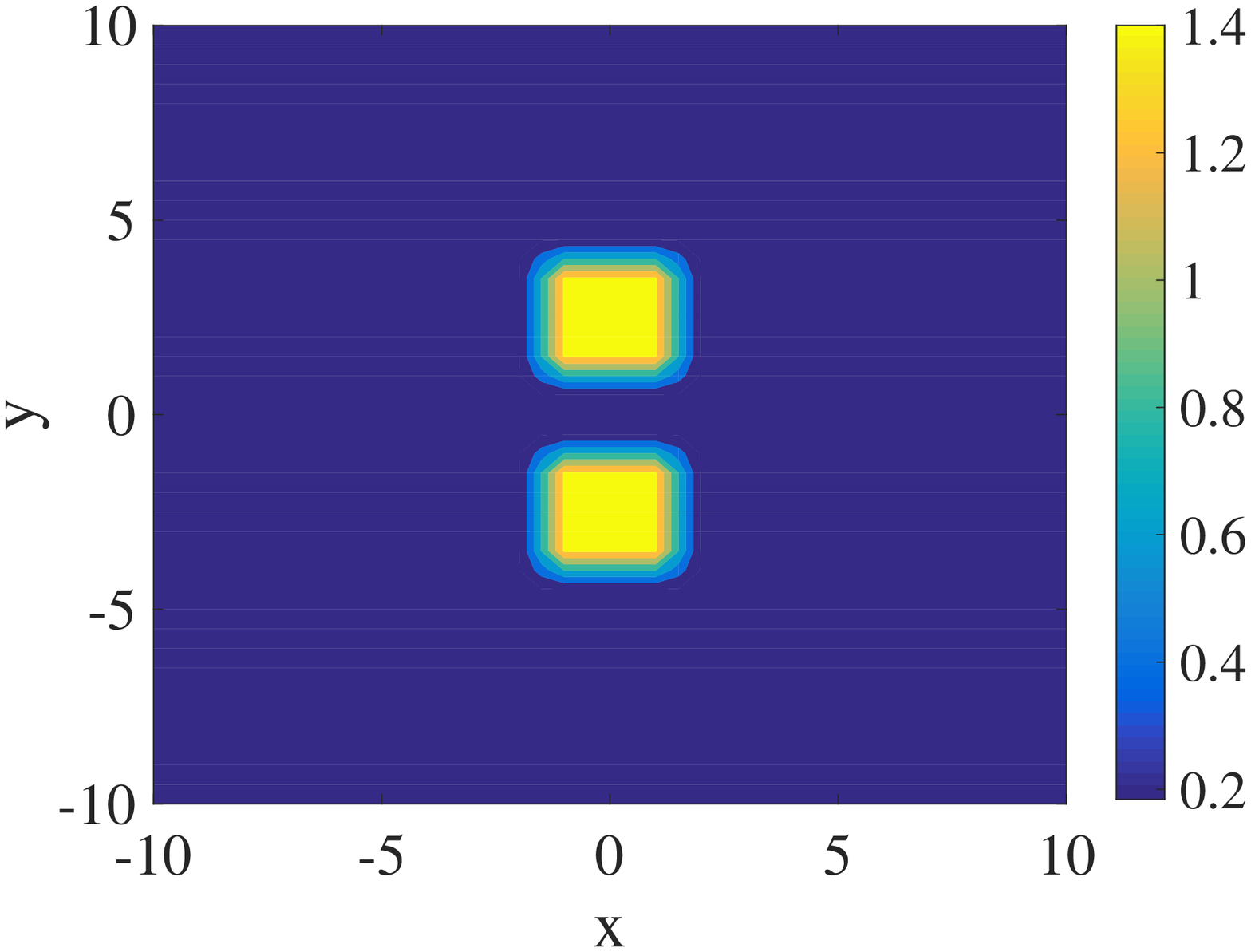}
            \end{minipage}
            }
            \centering \subfigure[]{
            \begin{minipage}[b]{0.3\textwidth}
            \centering
             \includegraphics[width=0.95\textwidth,height=0.9\textwidth]{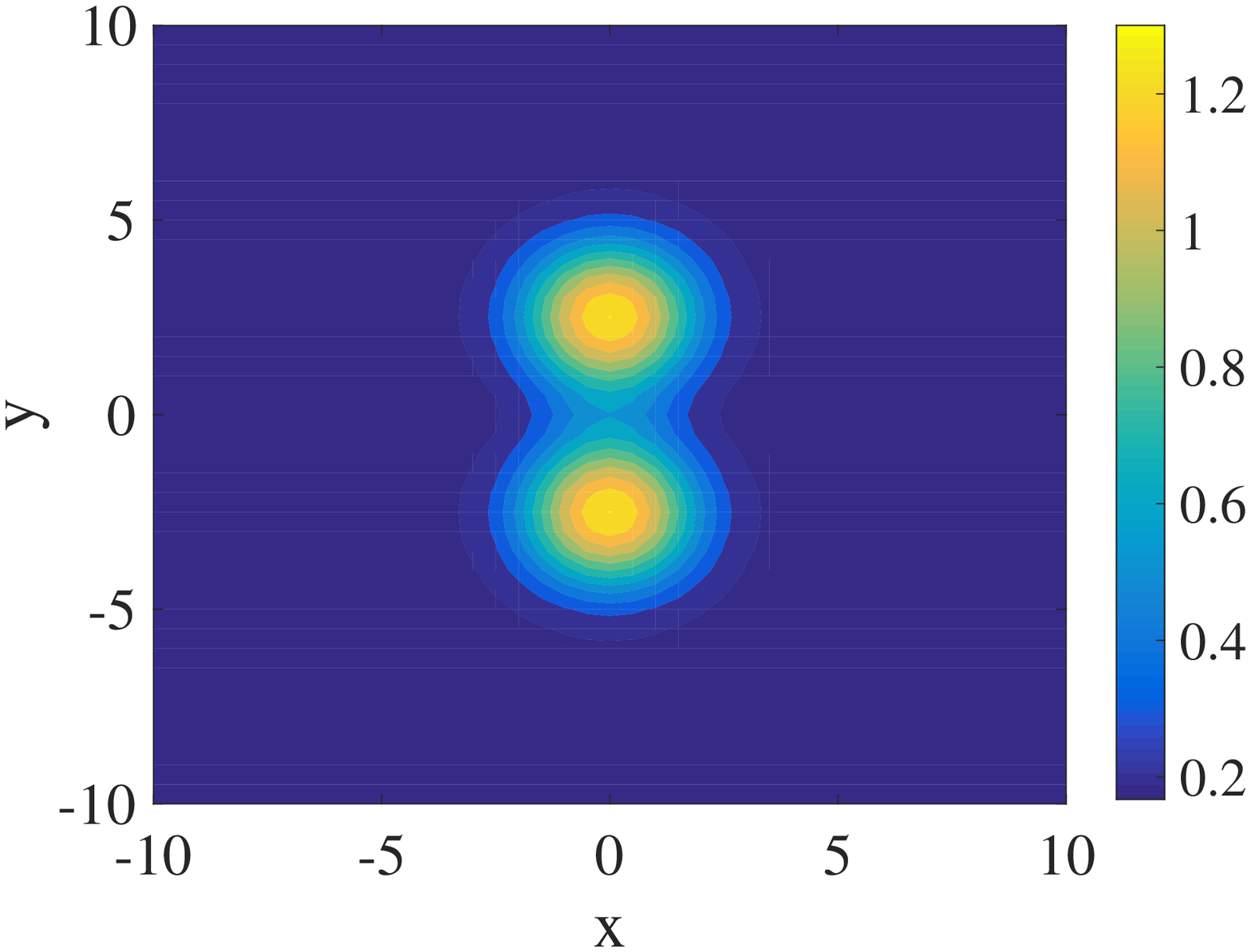}
            \end{minipage}
            }
           \centering \subfigure[]{
            \begin{minipage}[b]{0.3\textwidth}
            \centering
             \includegraphics[width=0.95\textwidth,height=0.9\textwidth]{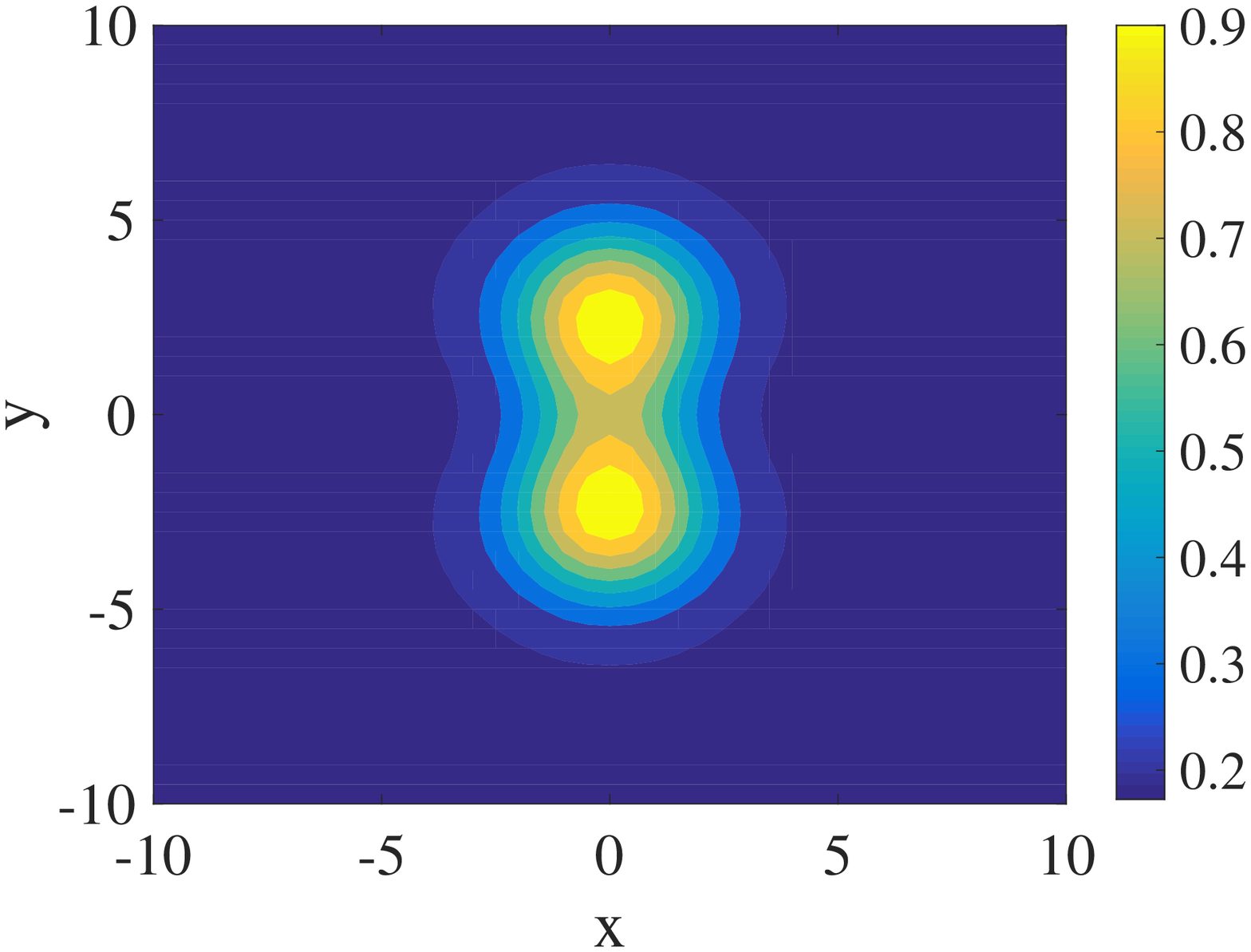}
            \end{minipage}
            }
           \centering \subfigure[]{
            \begin{minipage}[b]{0.3\textwidth}
               \centering
             \includegraphics[width=0.95\textwidth,height=0.9\textwidth]{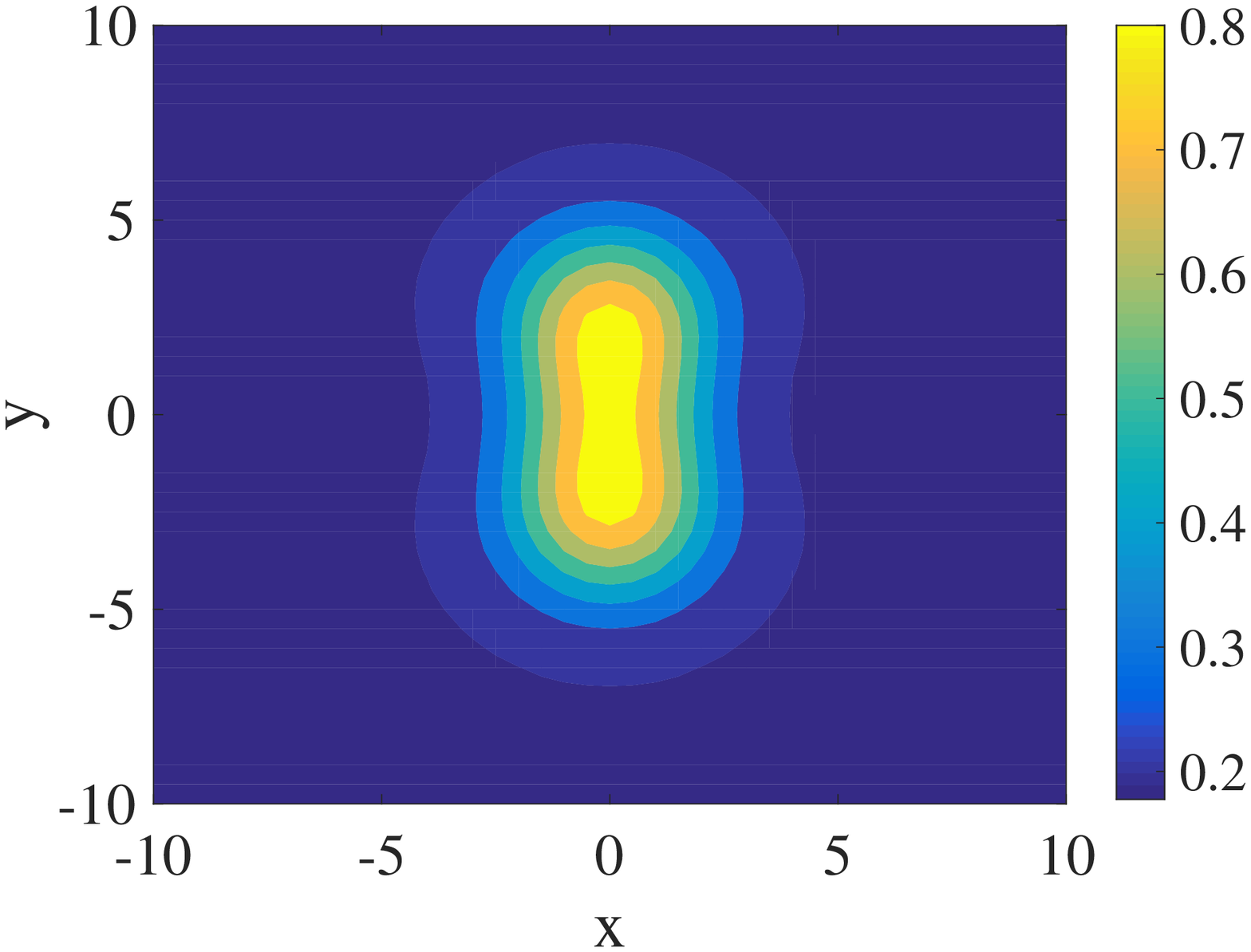}
            \end{minipage}
            }
            \centering \subfigure[]{
            \begin{minipage}[b]{0.3\textwidth}
            \centering
             \includegraphics[width=0.95\textwidth,height=0.9\textwidth]{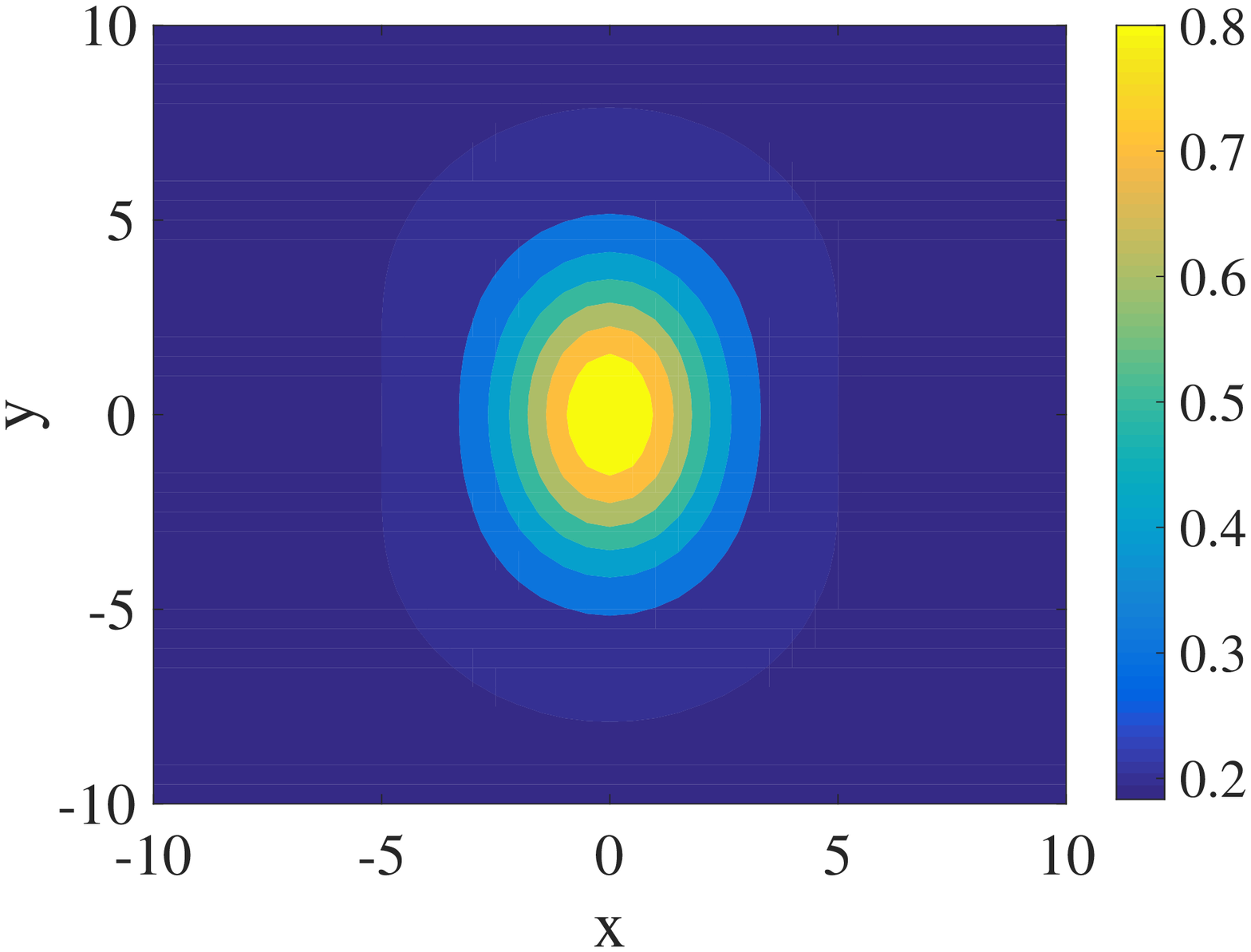}
            \end{minipage}
            }
           \centering \subfigure[]{
            \begin{minipage}[b]{0.3\textwidth}
            \centering
             \includegraphics[width=0.95\textwidth,height=0.9\textwidth]{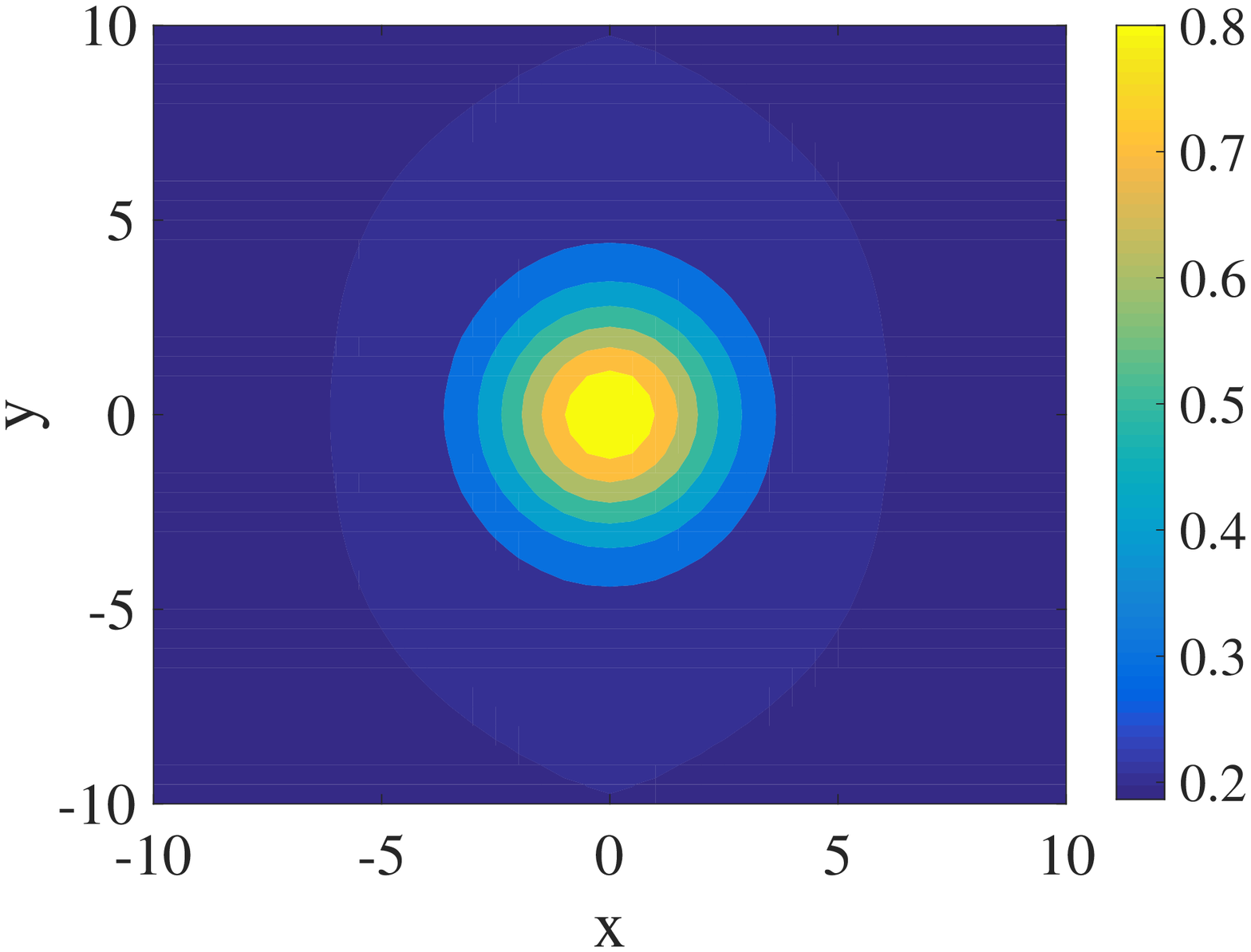}
            \end{minipage}
            }
           \caption{Ternary mixture: C$_{10}$ molar densities at  the initial(a),   30th(b), 100th(c), 200th(d), 500th(e) and 1000th(f)     time step  respectively.}
            \label{MCTwoSquareCh4andTwoHydrocarbonsMolarDensityOfnC10Temperature323K}
 \end{figure}

\begin{figure}
            \centering \subfigure[]{
            \begin{minipage}[b]{0.3\textwidth}
               \centering
             \includegraphics[width=0.95\textwidth,height=0.9\textwidth]{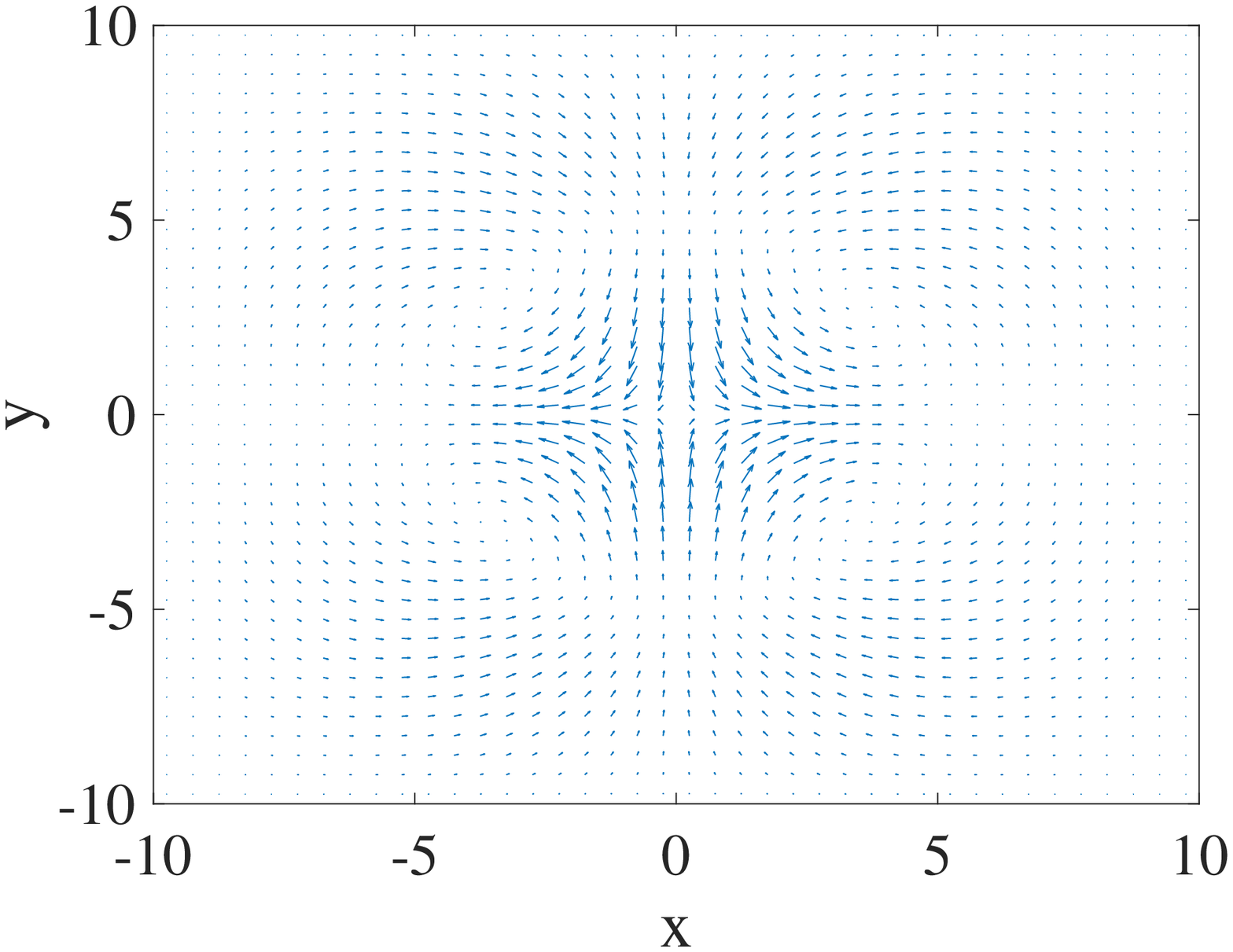}
            \end{minipage}
            }
            \centering \subfigure[]{
            \begin{minipage}[b]{0.3\textwidth}
               \centering
             \includegraphics[width=0.95\textwidth,height=0.9\textwidth]{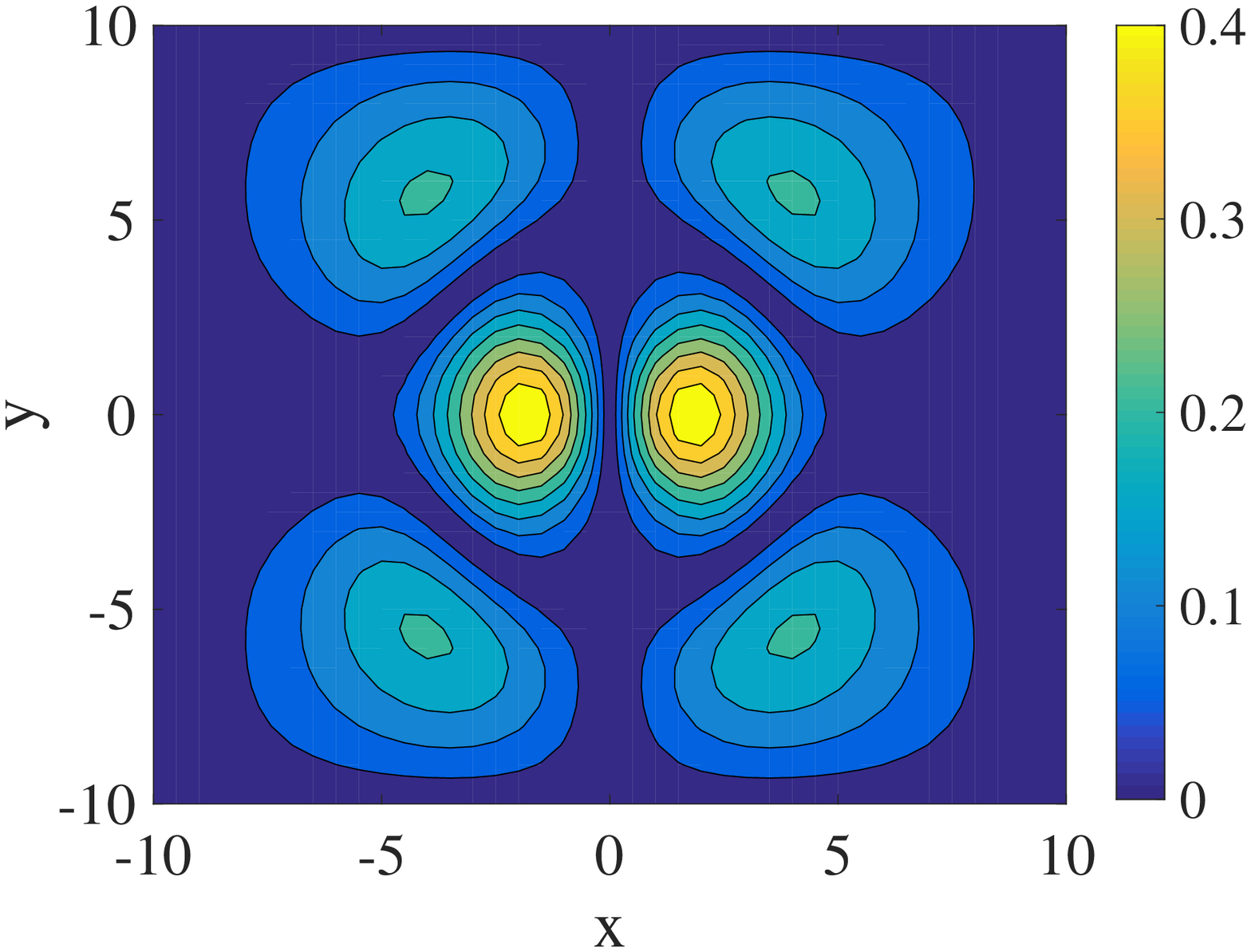}
            \end{minipage}
            }
            \centering \subfigure[]{
            \begin{minipage}[b]{0.3\textwidth}
            \centering
             \includegraphics[width=0.95\textwidth,height=0.9\textwidth]{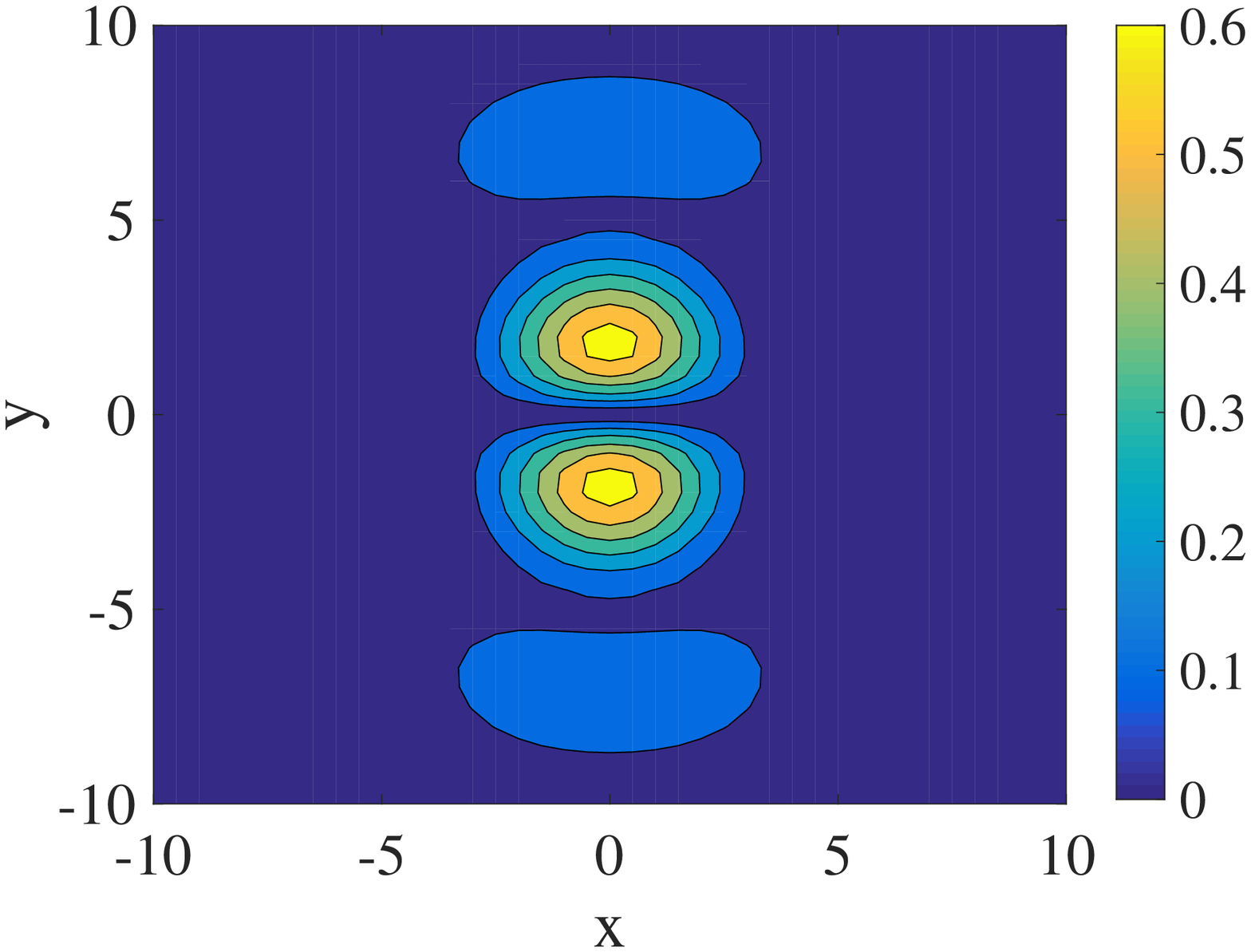}
            \end{minipage}
            }
           \centering \subfigure[]{
            \begin{minipage}[b]{0.3\textwidth}
               \centering
             \includegraphics[width=0.95\textwidth,height=0.9\textwidth]{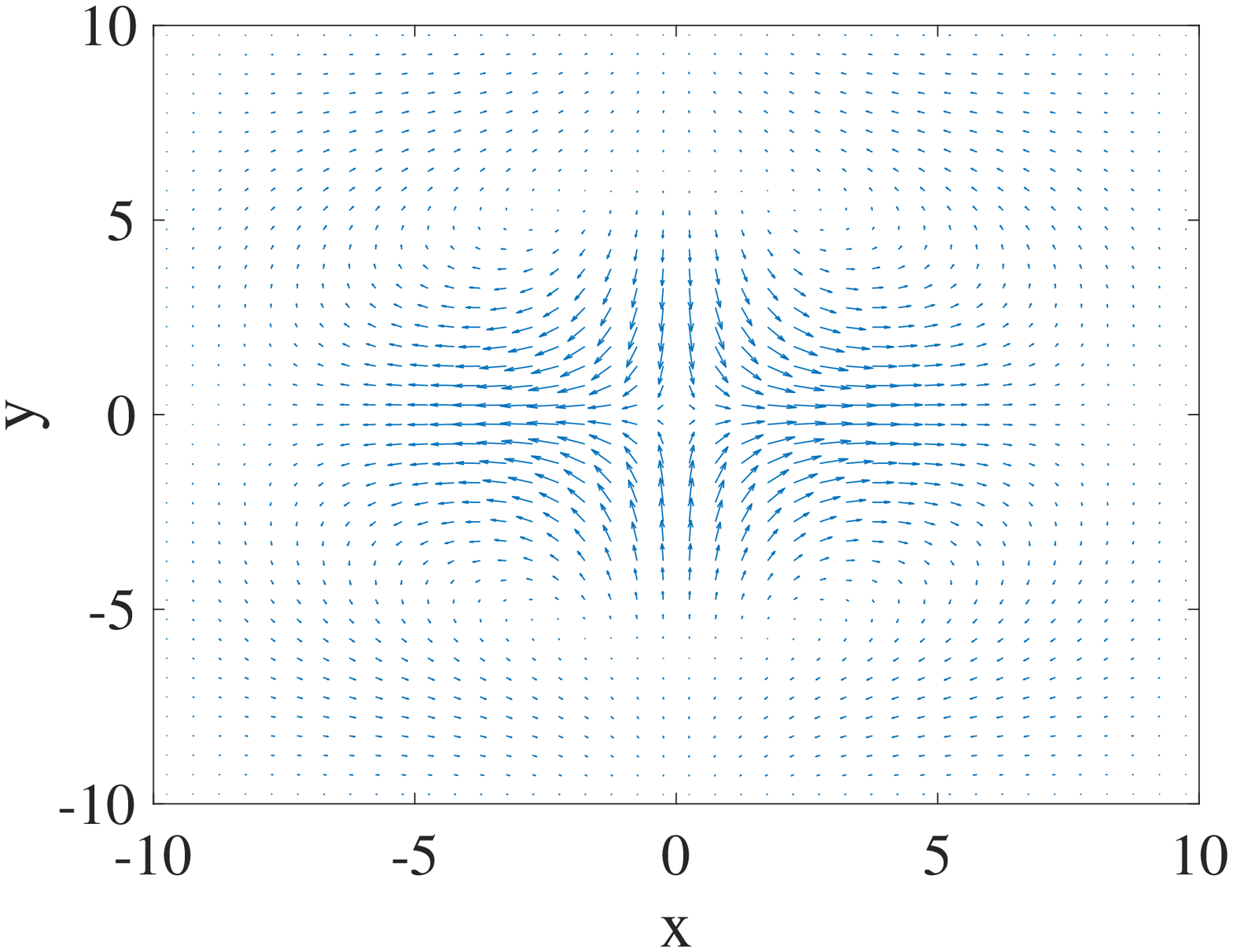}
            \end{minipage}
            }
            \centering \subfigure[]{
            \begin{minipage}[b]{0.3\textwidth}
               \centering
             \includegraphics[width=0.95\textwidth,height=0.9\textwidth]{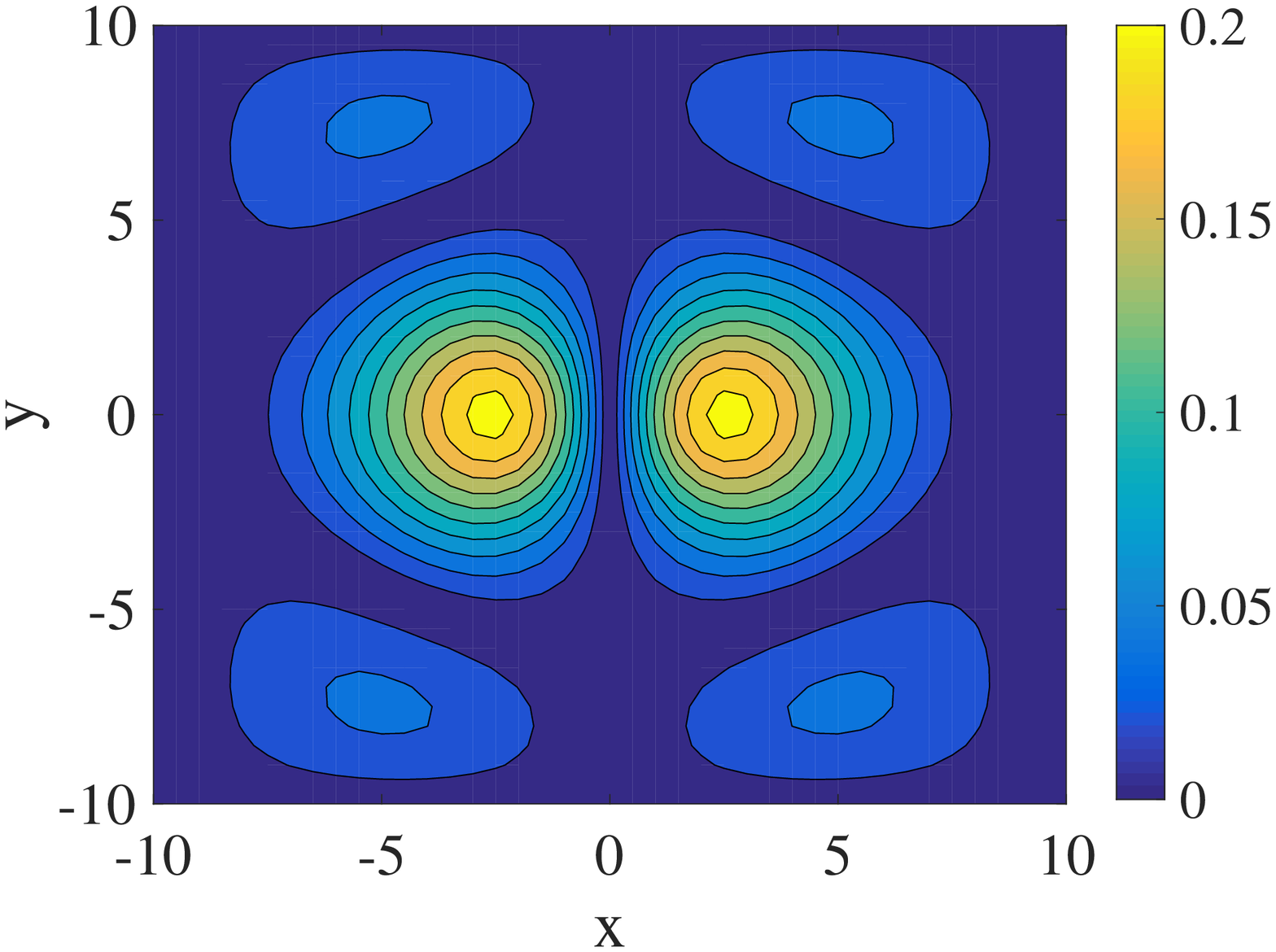}
            \end{minipage}
            }
            \centering \subfigure[]{
            \begin{minipage}[b]{0.3\textwidth}
            \centering
             \includegraphics[width=0.95\textwidth,height=0.9\textwidth]{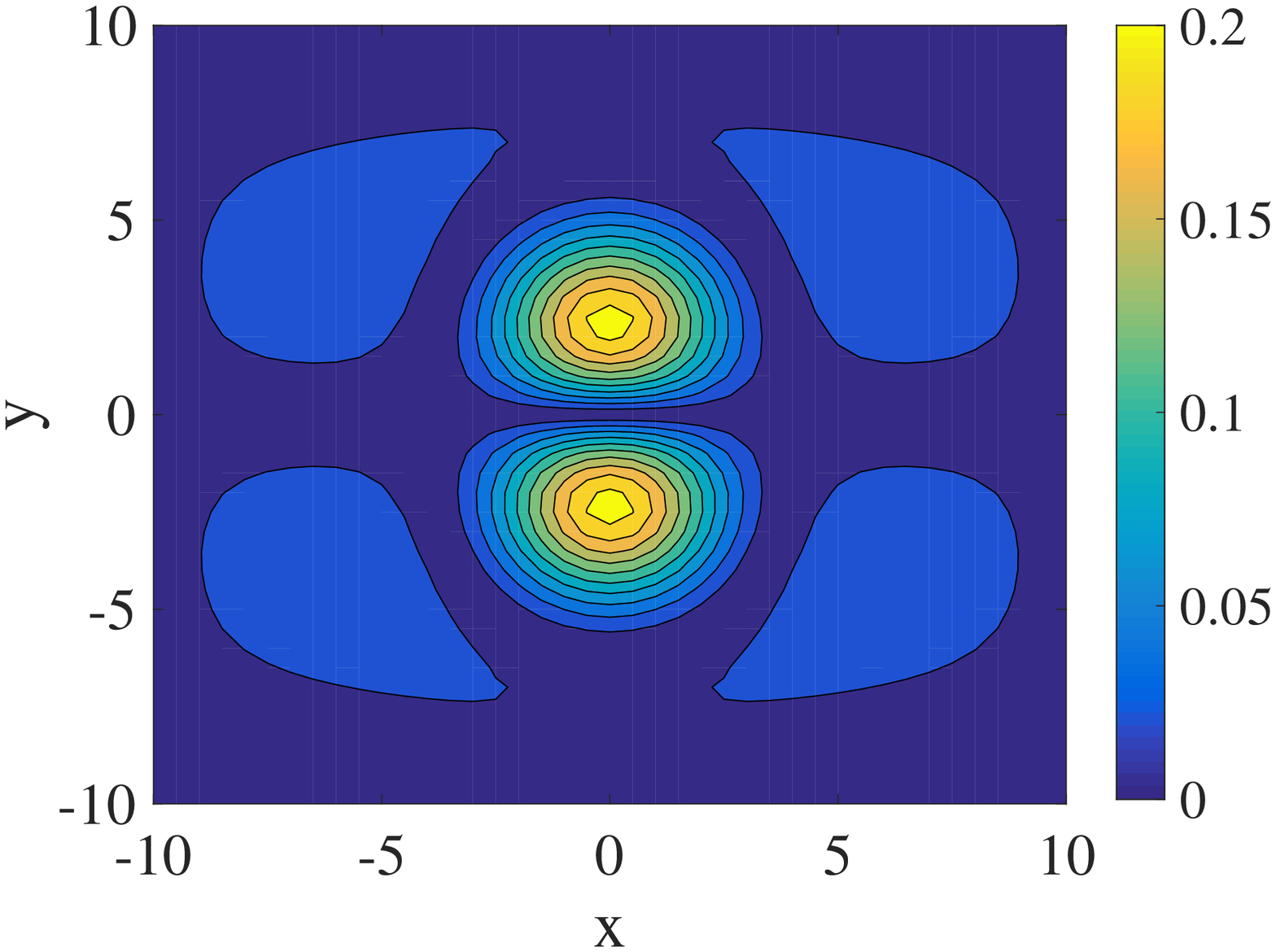}
            \end{minipage}
            }
           \centering \subfigure[]{
            \begin{minipage}[b]{0.3\textwidth}
               \centering
             \includegraphics[width=0.95\textwidth,height=0.9\textwidth]{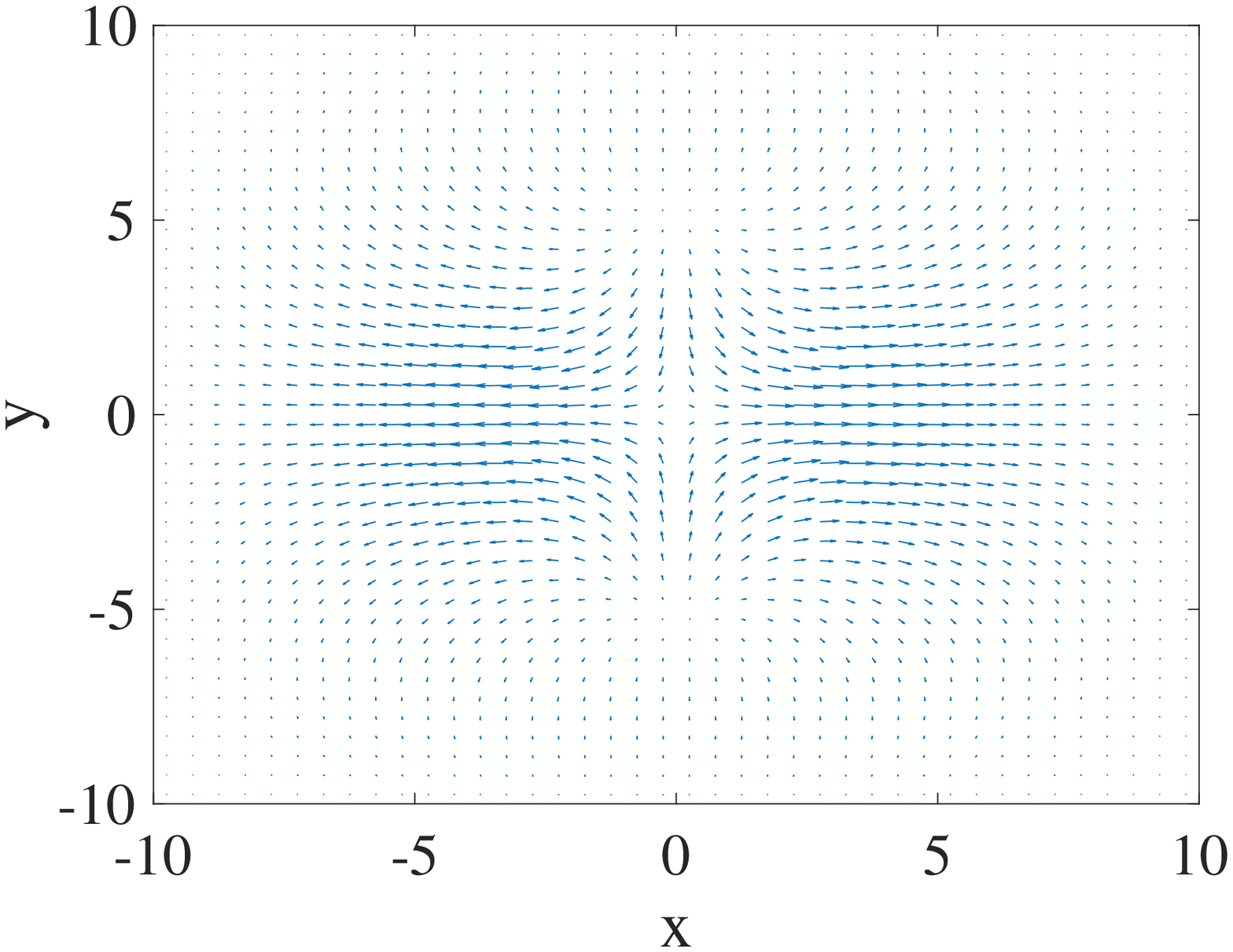}
            \end{minipage}
            }
            \centering \subfigure[]{
            \begin{minipage}[b]{0.3\textwidth}
               \centering
             \includegraphics[width=0.95\textwidth,height=0.9\textwidth]{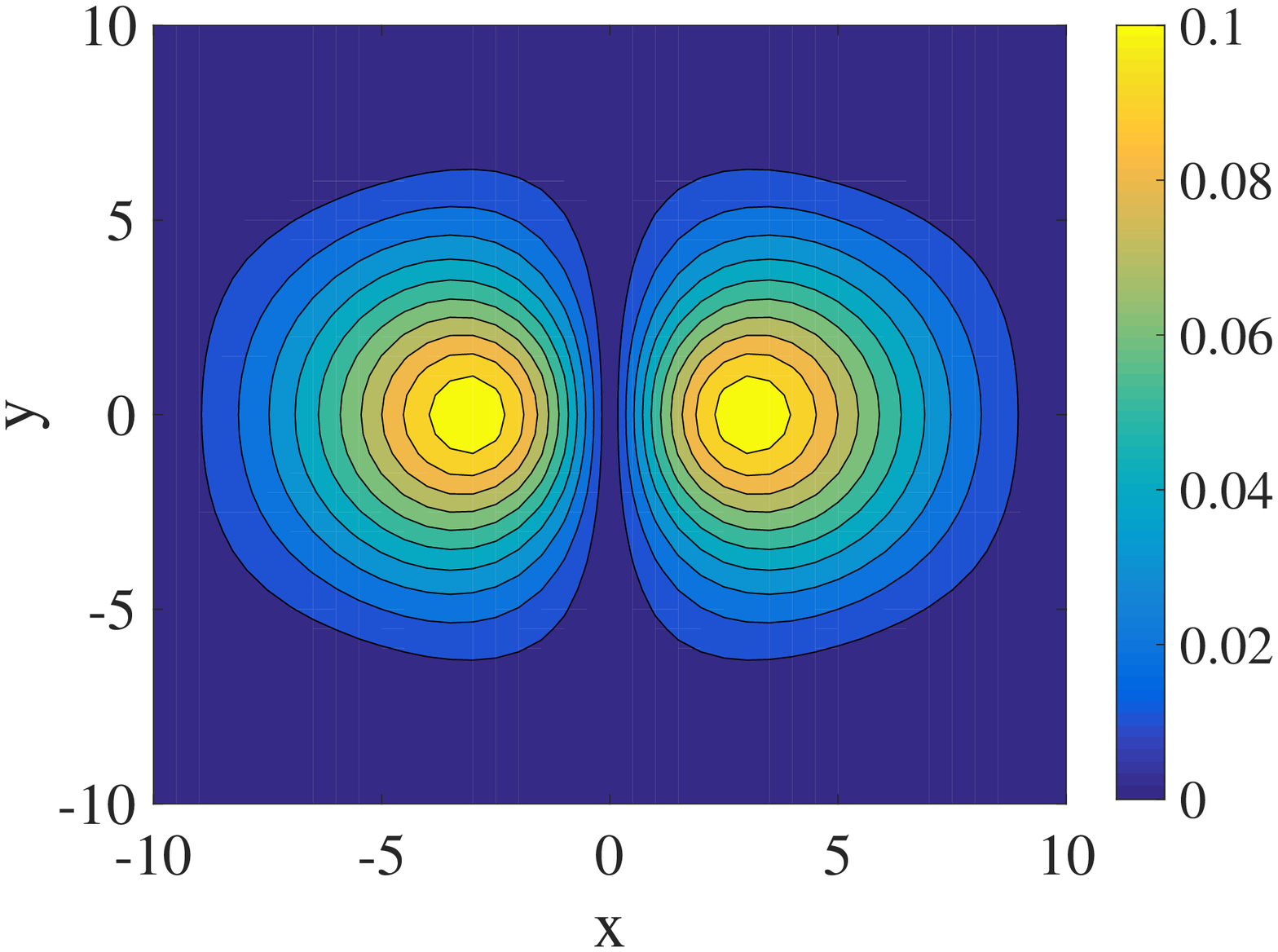}
            \end{minipage}
            }
            \centering \subfigure[]{
            \begin{minipage}[b]{0.3\textwidth}
            \centering
             \includegraphics[width=0.95\textwidth,height=0.9\textwidth]{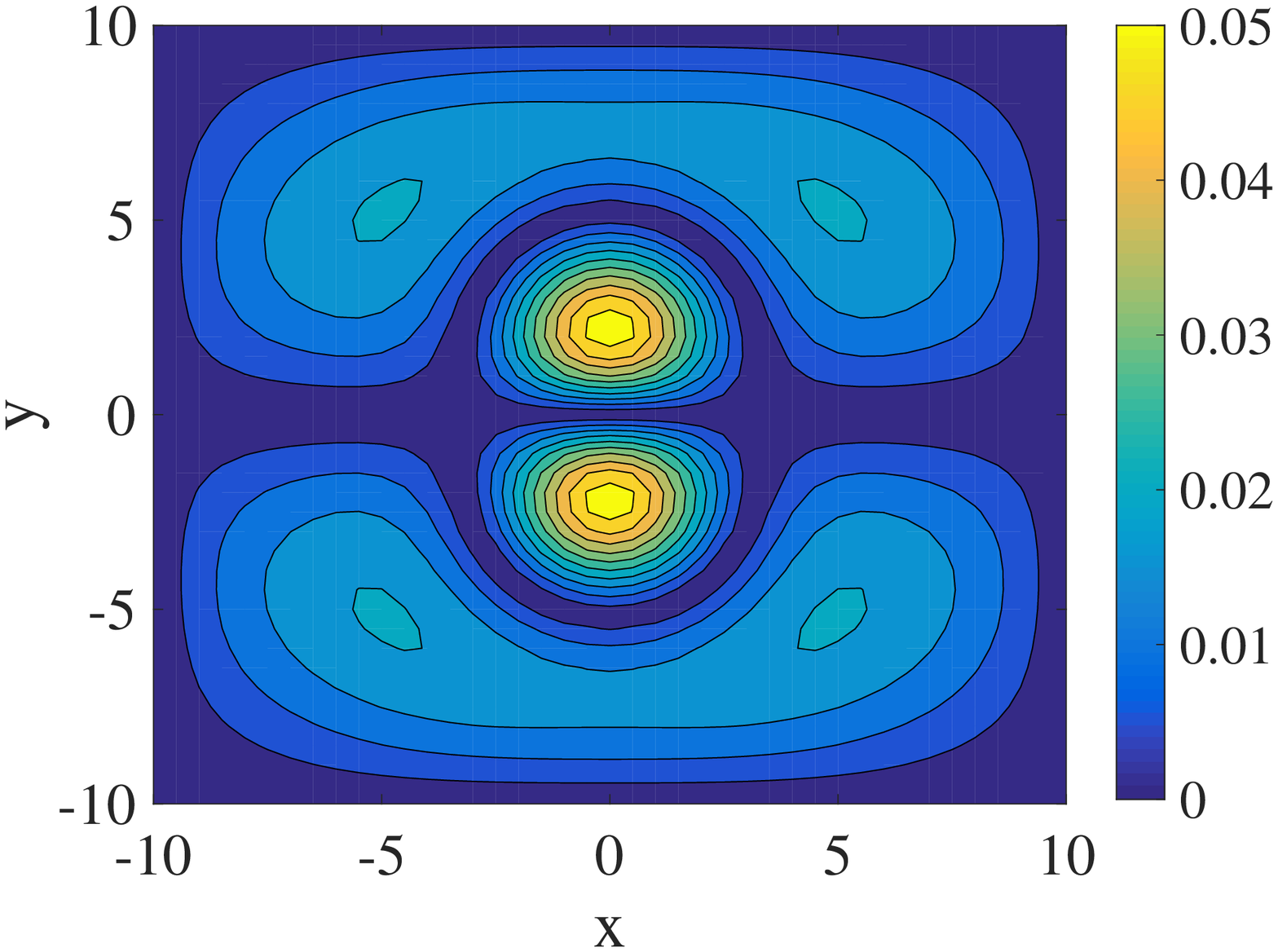}
            \end{minipage}
            }
           \caption{Ternary mixture:   flow quivers (left column), magnitude contours of $x$-direction velocity component (center column), and magnitude contours of $y$-direction velocity component (right column)   at the 100th(top row), 500th(center row), and 1000th(bottom row) time step  respectively.}
            \label{MCTwoSquareCh4andTwoHydrocarbonsVelocityTemperature323K}
 \end{figure}

\section{Conclusions}

 
Two decoupled  energy-stable numerical  schemes are developed for    multi-component two-phase compressible flows with a  realistic equation of state (e.g. Peng-Robinson equation of state). In these methods,  the scalar auxiliary variable (SAV) approach is applied to deal with the bulk Helmholtz free energy, and moreover, we propose a component-wise SAV approach, which is extremely  efficient and easy-to-implement for multi-component flows. In order to uncouple the tight  relationship between velocity and molar densities, we introduce two intermediate  velocities, one  of which has a component-wise form matching the component-wise SAV approach.  
The intermediate  velocities are involved in the discrete  formulation of the  momentum balance equation, which establishes  the consistent relationships with  the mass balance equations.
The proposed numerical  schemes  only need to solve  a sequence of linear equations at each time step.  The discrete unconditional energy dissipation laws of the proposed methods are proved rigorously.  Numerical results validate  the effectiveness of the proposed methods.

\begin{appendix}\label{appendix}

\section{Helmholtz free energy density}\label{appendixHelmholtz}
Let $R$ be the universal gas constant and let $T$ be the specified temperature. The three contributions of  Helmholtz free energy density $f_{b}(\n)$ based on Peng-Robinson equation of state are formulated as
 \begin{eqnarray}\label{eqHelmholtzEnergy_a0_01}
    f_b^{\textnormal{ideal}}(\n)= RT\sum_{i=1}^{M}n_i\(\ln n_i-1\),
\end{eqnarray}
\begin{eqnarray}\label{eqHelmholtzEnergy_a0_02}
    f_b^{\textnormal{repulsion}}(\n)=-nRT\ln\(1-bn\),
\end{eqnarray}
\begin{eqnarray}\label{eqHelmholtzEnergy_a0_03}
    f_b^{\textnormal{attraction}}(\n)= \frac{a(T)n}{2\sqrt{2}b}\ln\(\frac{1+(1-\sqrt{2})b n}{1+(1+\sqrt{2})b n}\),
\end{eqnarray}
where  $n=\sum_{i=1}^Mn_i$ is the overall molar density. Here,  $a$ and $b$ are the energy parameter and  the covolume respectively, which depend on  the mixture composition and  temperature.
Let us denote by $T_{c_i}$ and $P_{c_i}$  the   critical temperature and critical pressure of component $i$ respectively.  For the $i$th component, we let the reduced temperature be  $T_{r_i}=T/T_{c_i}$. 
The parameters $a_{i}$ and $b_{i}$ are calculated as
\begin{eqnarray}\label{eqHelmholtzEnergy_ab}
   a_{i}= 0.45724\frac{R^2T_{c_i}^2}{P_{c_i}}\[1+m_i(1-\sqrt{T_{r_i}})\]^2,~~~~b_{i}= 0.07780\frac{RT_{c_i}}{P_{c_i}}.
\end{eqnarray}
We denote by $\omega_i$  the acentric factor of component $i$.
 The coefficients $m_i$ are calculated  by the following relations
\begin{eqnarray*}
 m_i=0.37464 + 1.54226\omega_i-  0.26992\omega_i^2 ,~~\omega_i\leq0.49,
\end{eqnarray*}
\begin{eqnarray*}
 m_i=0.379642+1.485030\omega_i-0.164423\omega_i^2 +0.016666 \omega_i^3,~~\omega_i>0.49.
\end{eqnarray*}
  We denote by $y_i=n_i/n$ the mole fraction of component $i$ and let $k_{ij}$ be the  binary interaction coefficients for the energy parameters.  Then we calculate  $a(T)$ and $b$   as
\begin{eqnarray*}
   a =\sum_{i=1}^M\sum_{j=1}^M y_i y_j \(a_ia_j\)^{1/2}(1-k_{ij}),~~~~b =\sum_{i=1}^M  y_i b_{i}.
\end{eqnarray*}

We list some physical parameters of three substances in Table \ref{tabParametersPREOS}. 
 \begin{table}[htp]
\caption{Physical parameters}
\begin{center}
\begin{tabular}{cccccc}
\hline
Substance & $P_c$(bar) & $T_c$(K) & Acentric factor & $M_w$(g/mole)\\
\hline
methane     &  45.99                          & 190.56                            &0.011              &16.04 \\
pentane      &  33.70                          & 469.7                              &0.251              &72.15\\
decane       &  21.1                            & 617.7                              &0.489              &142.28\\
\hline
\end{tabular}
\end{center}
\label{tabParametersPREOS}
\end{table}

\section{Influence parameters}\label{appendixInfluenceParameters}
 The  influence parameters are generally  assumed to rely  on the temperature but  independent of    molar densities.  We  now provide  the formulations of the  influence parameters. First, we formulate the  influence parameter $c_{i}$ of  component $i$ as \cite{miqueu2004modelling}
\begin{eqnarray*}
 c_i=a_ib_i^{2/3}\[\gamma_i(1-T_{r_i})+\phi_i\],
\end{eqnarray*}
where  $a_i$ and $b_i$ are given in \eqref{eqHelmholtzEnergy_ab}, and  $\gamma_i$ and $\phi_i$ are the coefficients
 correlated merely with the acentric factor $\omega_i$ of   component $i$
 by the following relations
\begin{eqnarray*}
 \gamma_i=-\frac{10^{-16}}{1.2326+1.3757\omega_i},~~~~\phi_i=\frac{10^{-16}}{0.9051+1.5410\omega_i}.
\end{eqnarray*}

The cross influence parameter between binary components   is generally  calculated  as a modified geometric mean of the pure component
influence parameters $c_i$ and $c_j$ 
\begin{eqnarray*}\label{eqCij}
 c_{ij}= (1-\beta_{ij})\sqrt{c_ic_j},
\end{eqnarray*}
where  $\beta_{ij}$ are the binary interaction coefficients   satisfying the symmetry $c_{ij}=c_{ji}$ and $\beta_{ii}=0,~0\leq\beta_{ij}<1$.   In  numerical tests, we take $\beta_{ij}=0.5$ for $i\neq j$.

\end{appendix}

\small

\end{document}